\DeclareMathAlphabet{\mathpzc}{OT1}{pzc}{m}{it}
\theoremstyle{definition}
\newtheorem{theorem}{Theorem}[section]
\newtheorem{theorema}{Theorem}[section] 
\newtheorem{corollary}[theorem]{Corollary}
\newtheorem{lemma}[theorem]{Lemma}
\newtheorem{proposition}[theorem]{Proposition}
\numberwithin{equation}{section}
\theoremstyle{definition}
\newtheorem{definition}[theorem]{Definition}
\newtheorem{example}[theorem]{Example}
\newtheorem{remark}[theorem]{Remark}
\theoremstyle{remark}
\DeclareFontFamily{OT1}{rsfs}{}
\DeclareFontShape{OT1}{rsfs}{n}{it}{<-> rsfs10}{}
\DeclareMathAlphabet{\curly}{OT1}{rsfs}{n}{it}
\newcommand{\cC}{\mathcal{C}}
\newcommand{\cI}{\mathcal{I}}
\newcommand{\cJ}{\mathcal{J}}
\newcommand{\cM}{\mathcal{M}}
\newcommand{\cS}{\mathcal{S}}
\newcommand{\cX}{\mathcal{X}}
\newcommand{\cY}{\mathcal{Y}}
\begin{document}
	\baselineskip=14.5pt
	\title{Grothendieck topologies with logarithmic modifications}
	\author{Xianyu Hu}
	\address{Fakult\"at f\"ur Mathematik,
	Technische Universit\"at M\"unchen, D-85747 Garching bei M\"unchen, Germany}
	\email{xianyu.hu@tum.de}
	\author{Maximilian Schimpf}
	\address{Max Planck Institute for Mathematics, Bonn, Germany}
	\email{mschimpf@mpim-bonn.mpg.de}
     \subjclass[2020]{14A21}
\keywords{Logarithmic geometry, Logarithmic modification, Grothendieck topology, Valuative log space, full log \'etale site}
	\date{\today}	
\begin{abstract}
Many concepts in log geometry are invariant under log blow-ups. To formalize this invariance, we introduce the m-open, m-\'etale, m-smooth, m-fppf, and m-fpqc topologies for fs log schemes. These refine the standard topologies from scheme theory by treating abstract log modifications as covers. For example, the m-\'etale topology is a subtopology of full log \'etale topology, characterized by a stronger lifting property than for log \'etale maps. Along the way, we identify and correct an error in the definition of the full log \'etale topology. We also prove a global integralization theorem by logarithmic blow-ups and use it to describe the m-open topos as a limit over log blow-ups. Finally, we characterize the sheaves on the m-type sites and connect the m-open site to Kato's valuative space.
\end{abstract}

\maketitle
\setcounter{tocdepth}{2}
\tableofcontents
\maketitle
\section*{Introduction}
Log modifications are fundamental to log geometry: log Gromov-Witten theory, log Chow theory and log Hochschild (co)homology are all 
invariant under log modifications (see, e.g., \cite{abramovich2020decomposition, Punctured-log-maps, abramovich2017boundedness, abramovich2018birational, barrott2020logarithmicchowtheory, bousseau2024stable, CAVALIERI_CHAN_ULIRSCH_WISE_2020, Schmitt2025logarithmic, Hodge-bundle,  Molcho_Wise_2022, ranganathan2022logarithmic}). Moreover, the log Quot spaces of \cite{kennedy2023logarithmic,kennedy2025logarithmic,maulik2024logarithmic,maulik2025logarithmic} parametrize sheaves on log modifications. As hinted in \cite{kennedy2025logarithmic}, these should perhaps more naturally be interpreted as sheaves on an enlargement of the Zariski (resp. \'etale, smooth, flat) topology obtained by declaring log modifications to be covers. This paper formalizes that intuition by systematically developing what we call the \emph{m-type topologies}.

\subsection{M-type morphisms} To formalize the m-type topologies, we will introduce \emph{m-type morphisms}. First, we have the \emph{m-open morphisms}:
\begin{definition}\label{m-open morphisms}
A morphism $X\rightarrow Y$ of fs log schemes is called \emph{m-open} if one can cover $X$ by opens $U\subset X$ so that $U\hookrightarrow X\rightarrow Y$ is a log \'etale monomorphism.
\end{definition}
We view m-open morphisms as a combination of strict open immersions and \emph{abstract log modifications} (see Definition \ref{defi: log mod} and Remark \ref{m-open local composition} for details).
Moreover, it turns out that m-open morphisms between fs log schemes with Zariski log structure can be described by an explicit chart criterion (see Proposition \ref{chart for m-open}).
This motivates our definition of \emph{m-\'etale}, \emph{m-smooth}, and \emph{m-flat} morphisms:
\begin{definition}\label{Definition of m-type morphism}
A morphism \( f: X \to Y \) of fs log schemes is called \emph{m-\'etale} (resp. \emph{m-smooth}, \emph{m-flat}) if for any $x \in X$ and any strict \'etale neighborhood $V \to Y$ of $f(x)$ admitting a chart $Q \to \Gamma(V, \mathcal{M}_Y)$, there exists a strict \'etale neighborhood $U\rightarrow X$ of $x$ such that $U\rightarrow X\rightarrow Y$ factors over a morphism $U\rightarrow V$ and admits a morphism of charts
    \[
    \begin{tikzcd}
        Q \arrow[r] \arrow[d, "\theta"'] & \Gamma(V, \mathcal{M}_{Y}) \arrow[d] \\
        P \arrow[r] & \Gamma(U, \mathcal{M}_X)
    \end{tikzcd}
    \]
    such that:
    \begin{enumerate}
        \item The morphism $\theta$ induces an isomorphism $Q^{\mathrm{gp}} \cong P^{\mathrm{gp}}$.
        \item The induced morphism $U \to V \times_{\mathbb{A}_Q} \mathbb{A}_P$ is strict \'etale (resp. smooth, flat).
    \end{enumerate}
\end{definition}
Similarly, a morphism is m-\'etale (resp. m-smooth, m-flat) if and only if it admits a strict \'etale local factorization as a strict \'etale (resp. smooth, flat) morphism and a log blow-up (see Proposition \ref{basic properties of m-type morphisms} for a precise statement). 

Rather surprisingly, m-\'etale and m-smooth morphisms can also be characterized by an infinitesimal lifting criterion.
\begin{definition}\label{Definition of formally m-etale smooth morphism}
Let $f:X\rightarrow Y$ be a morphism of fs log schemes. We say $f$ is \emph{formally m-\'etale} (resp. \emph{m-smooth}) if for any given outer commutative square
\begin{equation}
\begin{tikzcd}
T' \arrow[d, "i", hook] \arrow[r] & X \arrow[d, "f"] \\
T \arrow[r] \arrow[ru, dashed]    & Y               
\end{tikzcd}
\end{equation}
so that
\begin{enumerate}
    \item the underlying morphism of schemes $\underline{i}\colon\underline{T'}\hookrightarrow \underline{T}$ is a first-order thickening over $Y$;
    \item the morphism $i\colon T'\hookrightarrow T$ is exact, i.e. the diagram 
\begin{equation}
\begin{tikzcd}
i^{*}\mathcal{M}_T \arrow[r] \arrow[d] & \mathcal{M}_T' \arrow[d] \\
i^{*}\mathcal{M}_T^{gp} \arrow[r]      & \mathcal{M}_{T'}^{gp}   
\end{tikzcd}
\end{equation}
is cartesian,
\end{enumerate} 
there exists, strict \'etale locally in $T$, \emph{exactly one} (resp. \emph{at least one}) dotted arrow making the diagram commute.
\end{definition}
Note that this is a strengthening of the classical definition of log \'etale and log smooth (see \cite[Sections 3.2 and 3.3]{logpaperbyKazuyaKato}); the difference being that the thickenings $i\colon T'\hookrightarrow T$ are not assumed to be strict. In analogy with (log) \'etale morphisms, we have:
\begin{theorema}\label{main theorem A}
    A morphism of fs log schemes is m-\'etale (resp. m-smooth) if and only if it is formally m-\'etale (resp. m-smooth) and locally of finite presentation.
\end{theorema}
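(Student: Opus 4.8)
The plan is to prove the two implications separately, using as a template Kato's infinitesimal criterion for log smooth and log \'etale morphisms \cite{logpaperbyKazuyaKato} and then isolating the additional input supplied by the non-strict thickenings of Definition \ref{Definition of formally m-etale smooth morphism}. The organising principle is a division of labour between the two kinds of thickening: every strict thickening is in particular exact, so the strict ones reproduce Kato's theory and govern the \emph{reduction} $U\to V\times_{\mathbb{A}_Q}\mathbb{A}_P$, while the genuinely non-strict exact ones govern the \emph{chart} and force the isomorphism $Q^{\mathrm{gp}}\cong P^{\mathrm{gp}}$ of Definition \ref{Definition of m-type morphism}(1). Since all monoids in sight are fs, the groups $P^{\mathrm{gp}},Q^{\mathrm{gp}}$ are free, so $\ker(\theta^{\mathrm{gp}})$ is automatically trivial and the entire content of condition (1) is the vanishing of $\operatorname{coker}(\theta^{\mathrm{gp}})$.

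For the implication ``m-\'etale (resp.\ m-smooth) $\Rightarrow$ formally m-\'etale (resp.\ m-smooth) and locally of finite presentation'', I would reduce to the strict-\'etale-local factorization of Proposition \ref{basic properties of m-type morphisms}, writing $f$ \'etale-locally as a strict \'etale (resp.\ smooth) morphism followed by a log blow-up $\mathbb{A}_P\to\mathbb{A}_Q$ with $Q^{\mathrm{gp}}\cong P^{\mathrm{gp}}$. Both factors are locally of finite presentation, and since the formal property is stable under composition and strict-\'etale-local on the source, it suffices to verify the lifting criterion for each factor. For the strict \'etale (resp.\ smooth) factor one invokes the classical infinitesimal criterion on the underlying first-order thickening $\underline{i}$, the log structure being transported unchanged. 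The log blow-up factor is where exactness enters: given an exact thickening $i\colon T'\hookrightarrow T$ and a chart $P\to\mathcal{M}_{T'}$, the isomorphism $Q^{\mathrm{gp}}\cong P^{\mathrm{gp}}$ reads off the image of each $p\in P$ in $i^{*}\mathcal{M}_T^{\mathrm{gp}}$ from the chart $Q\to\mathcal{M}_T$, and the cartesian square of Definition \ref{Definition of formally m-etale smooth morphism}(2) forces this image to lie in $i^{*}\mathcal{M}_T$; hence the chart, and with it the dotted arrow, exists. Uniqueness is automatic because log modifications are monomorphisms by Definition \ref{defi: log mod}.

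For the converse, I would first note that, strict thickenings being exact, a formally m-\'etale (resp.\ m-smooth) morphism is in particular formally log \'etale (resp.\ log smooth) in Kato's sense; together with local finite presentation this yields, strict-\'etale-locally, a chart $\theta\colon Q\to P$ with $\theta^{\mathrm{gp}}$ injective and a classically \'etale (resp.\ smooth) reduction $U\to V\times_{\mathbb{A}_Q}\mathbb{A}_P$. It then remains only to show $\operatorname{coker}(\theta^{\mathrm{gp}})=0$, which I would obtain by feeding two kinds of non-strict exact thickenings into the \emph{existence} half of the lifting criterion, so that the argument applies uniformly to the \'etale and the smooth case. A nonzero free summand of $\operatorname{coker}(\theta^{\mathrm{gp}})$ gives a relative log direction not pulled back from $Y$; the exact thickening with $\underline{i}=\mathrm{id}$ that carries a log point on $T'$ but the trivial log structure on $T$ then admits the required commutative square, yet no lift can exist, since a lift would force the corresponding log parameter to land in $\mathcal{O}_T^{\times}$ while it maps to $0$. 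A nonzero $n$-torsion class, detected by a chart direction $\theta=(\times n)\colon\mathbb{N}\to\mathbb{N}$, is obstructed by the exact thickening with ghost map $\times n$ over the first-order point $\operatorname{Spec} k[\epsilon]$ with log parameter sent to $\epsilon$: the square commutes, but a lift would require $\alpha^{n}=\epsilon$ in $k[\epsilon]/\epsilon^{2}$, which is unsolvable. Hence $\operatorname{coker}(\theta^{\mathrm{gp}})=0$, so $Q^{\mathrm{gp}}\cong P^{\mathrm{gp}}$; the log structures on the two sides of the reduction now agree, making it \emph{strict} \'etale (resp.\ smooth), and the chart exhibits $f$ as m-\'etale (resp.\ m-smooth).

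The step I expect to be the main obstacle is making the two families of obstructing thickenings canonical and globally coherent. At a geometric point one must produce, from a given nonzero class in $\operatorname{coker}(\theta^{\mathrm{gp}})$, an exact first-order thickening \emph{together with} a commutative square whose top arrow $T'\to X$ genuinely exists -- the subtlety being that the square exists precisely when the class is present, so that the lifting instance is non-vacuous and its failure is equivalent to non-vanishing of the cokernel. One must also check that these pointwise thickenings can be chosen compatibly with the classically \'etale/smooth reduction already supplied by Kato's theorem, and assembled into a single chart witnessing $Q^{\mathrm{gp}}\cong P^{\mathrm{gp}}$ strict-\'etale-locally; keeping the scheme-theoretic infinitesimal datum and the monoid map synchronised so that the obstruction equations ($\alpha^{n}=\epsilon$, resp.\ a unit forced to vanish) are genuinely unsolvable is the crux of the argument.
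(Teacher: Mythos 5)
Your forward direction is essentially the paper's: factor strict-\'etale-locally into a strict \'etale (resp.\ smooth) morphism followed by a log blow-up via Proposition \ref{basic properties of m-type morphisms}\eqref{locality}, and verify the lifting property for each factor (this is the paper's Lemma \ref{strict etale is m-etale} and Proposition \ref{log blow-up is m-etale}); that part is sound. The converse, however, has a genuine gap. A preliminary problem: your claim that $\ker(\theta^{\mathrm{gp}})$ vanishes "because the groups are free" is a non sequitur (fs monoids can have torsion in $P^{\mathrm{gp}}$, and maps of free groups can have kernels); more importantly, a nonzero $\operatorname{coker}(\theta^{\mathrm{gp}})$ obstructs \emph{nothing} unless the chart $P$ is exact at the point. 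For instance, a strict smooth morphism admits the Kato chart $Q=\mathbb{N}\hookrightarrow P=\mathbb{N}\oplus\mathbb{Z}$ with $\operatorname{coker}(\theta^{\mathrm{gp}})=\mathbb{Z}$, and a Kummer cover restricted to its strict (unit) locus has a chart with spurious torsion cokernel; in both cases the morphism is m-type, so your obstruction must not exist there, and indeed it cannot be built -- but this shows your dichotomy "nonzero class $\Rightarrow$ obstructing square" is false for general charts. You therefore need to first replace Kato's chart by one exact at $\overline{x}$; this is exactly the paper's Lemma \ref{Nifty Chart}, which your outline skips.

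Second, and more seriously, even granting an exact chart, your free-summand test fails on standard examples. Take $f\colon \operatorname{Spec}(\mathbb{N}^2\to k[x,y])\to\operatorname{Spec}(\mathbb{N}\to k[t])$, $t\mapsto xy$, with $\theta$ the diagonal $\mathbb{N}\to\mathbb{N}^2$: this is log smooth but not m-smooth, and $\operatorname{coker}(\theta^{\mathrm{gp}})\cong\mathbb{Z}$ is free. Your test takes $T$ with \emph{trivial} log structure; but then commutativity of the square forces the image of $\mathcal{M}_Y$ in $\mathcal{M}_{T'}$ to consist of units, while any $T'\to X$ meeting the locus where the relative log direction is nontrivial sends $\theta(1)=e_1+e_2$ to a non-unit. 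So no commuting square of your proposed form exists at the points that matter, the test is vacuous, and no contradiction is reached: your proof cannot conclude for this morphism. The fix -- and this is the paper's key idea, which needs no free/torsion case split -- is to take for $T$ not the trivial log structure but the one cut out by $Q$: with $P'$ an exact chart at $x$, set $P=P'\cap Q^{\mathrm{gp}}$ and feed the single exact thickening $\operatorname{Spec}(P'\to k(x))\to\operatorname{Spec}(P\to k(x))$ (identity on underlying schemes, only the log structure changes) into the lifting property. The resulting lift yields a retraction of the injection $\overline{P^{a}}_{\overline{x}}\hookrightarrow\overline{P'^{a}}_{\overline{x}}$, hence $\overline{P}\cong\overline{P'}$ and $P^{\mathrm{gp}}=Q^{\mathrm{gp}}$ in one stroke; in the $xy=t$ example this is precisely the test $\operatorname{Spec}(\mathbb{N}^2\to k)\to\operatorname{Spec}(\mathbb{N}\to k)$ with $T$ carrying the pulled-back, not the trivial, log structure. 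After that, strictness of $U\to Y\times_{\mathbb{A}_Q}\mathbb{A}_P$ together with the cancellation property (Lemma \ref{Properties for m-etale and smooth morphisms}\eqref{item: useful composition property} combined with Lemma \ref{strict etale is m-etale}) finishes the argument as in the paper.
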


\subsection{M-type topologies}
M-type morphisms naturally yield corresponding Grothendieck topologies:

\begin{definition}\label{definition of m-type coverings}
For each $\tau \in \{ \text{m-open}, \text{m-\'etale}, \text{m-smooth}, \text{m-fppf}, \text{m-fpqc}\}$, a family of morphisms of fs log schemes $\{ U_i \to U\}_{i \in I}$ is called a $\tau\textit{-covering}$ if and only if 
$f\colon\coprod_{i \in I} U_i \rightarrow U$
is universally surjective and 
\begin{enumerate}
    \item\label{item: stuff1} for $\tau\in \{\text{m-open}, \text{m-\'etale}, \text{m-smooth}\}$, we require that $f$ is of type $\tau$.
    \item\label{item: stuff2} for $\tau = \text{m-fppf}$, we require that $f$ is m-flat and locally of finite presentation.
    \item\label{item: stuff3} for $\tau = \text{m-fpqc}$, we require that $f$ is m-flat and for any quasi-compact open subset $V\subset U$ there is a quasi-compact open $W\subset \coprod_{i\in I} U_i$ so that the restriction $W\rightarrow V$ of $f$ is universally surjective.
\end{enumerate}
\end{definition}
We establish in Lemma \ref{lemma: m-type tops are tops} that these indeed satisfy all of the axioms of being a Grothendieck topology. In particular, we use a non-standard definition of universal surjectivity here 
(see Section \ref{subsection in introduction about universal surjectivity} for further discussion).
These topologies also come with associated sites:
\begin{definition}\label{definition of m-type sites}
Let $S$ be an fs log scheme. For each $\tau \in \{ \text{m-open}, \text{m-\'etale}, \text{m-smooth},\\ \text{m-fppf},\text{m-fpqc}\}$ and each $\tau'\in  \{ \text{m-open}, \text{m-\'etale}\}$, we define 
\begin{enumerate}
\item the \emph{big} $\tau\textit{-site}$ $(\mathsf{fsLogSch}/S)_\tau$ of $S$ as the category $\mathsf{fsLogSch}/S$ equipped with the $\tau$-topology.
\item the \emph{small} $\tau'\textit{-site}$ $S_{\tau'}$ of $S$ as the full subsite of $(\mathsf{fsLogSch}/S)_\tau$ spanned by those objects that are of type $\tau'$ over $S$. 
\end{enumerate}
\end{definition}
In analogy to classical scheme theory, we will not consider small sites for the other topologies since they are not functorial, see Remark \ref{remark on functoriality}.

The following theorem characterizes sheaves on our sites:
\begin{theorema}\label{thm:sheaf_conditions}
Let $S$ be an fs log scheme. For any $\tau \in \{ \text{m-open}, \text{m-\'etale}, \text{m-smooth}, \text{m-fppf},\\\text{m-fpqc}\}$ (resp. $\tau'\in  \{ \text{m-open}, \text{m-\'etale}\}$), let $\mathcal{F}$ be a presheaf on $(\mathsf{fsLogSch}/S)_{\tau}$ (resp. $S_{\tau'}$) valued in a category $\mathcal{C}$ with all limits. Then $\mathcal{F}$ is a sheaf if and only if
\begin{enumerate}
    \item for any strict $\tau\text{-cover}$ ($\tau'\text{-cover}$)
    $\{U_i \to U\}_{i \in I}$ in $(\mathsf{fsLogSch}/S)_{\tau}$ (resp.\ $S_{\tau'}$), we have  
    \[
    \mathcal{F}(U) = \mathrm{Eq}\left( \prod_{i \in I} \mathcal{F}(U_i) \rightrightarrows \prod_{i,j \in I} \mathcal{F}(U_i \times_U U_j) \right);
    \]
    \item restriction along a log blow-up $U \to V$ in $(\mathsf{fsLogSch}/S)_{\tau}$ (resp.\ $S_{\tau'}$) induces $\mathcal{F}(V) = \mathcal{F}(U)$.
\end{enumerate}
\end{theorema}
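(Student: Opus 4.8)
The plan is to prove both implications by recognizing conditions (1) and (2) as exactly the sheaf axiom for a convenient generating family of $\tau$-covers, namely the strict $\tau$-covers together with the single-morphism covers given by log blow-ups.

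For the \emph{only if} direction, suppose $\mathcal{F}$ is a $\tau$-sheaf. Since every strict $\tau$-cover is in particular a $\tau$-cover, condition (1) is immediate. For condition (2), observe that a log blow-up $U\to V$ is a log modification, hence a monomorphism by Definition \ref{defi: log mod}, and a universally surjective $\tau$-cover for every $\tau$ in the list (it is log \'etale, proper by Proposition \ref{prop: log mods are proper}, and universally surjective). For a monomorphism the diagonal $U\xrightarrow{\sim} U\times_V U$ is an isomorphism, so both projections $U\times_V U\rightrightarrows U$ coincide with the identity; the equalizer in the sheaf axiom for the cover $\{U\to V\}$ therefore collapses to $\mathcal{F}(V)=\mathcal{F}(U)$.

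For the \emph{if} direction, assume (1) and (2). Reading condition (2) as the sheaf axiom for the monomorphism cover $\{U\to V\}$ as above, we see that $\mathcal{F}$ satisfies the equalizer condition for every member of the class $\mathcal{B}$ of covers consisting of all strict $\tau$-covers and all single log blow-ups. Both families are stable under base change in $\mathsf{fsLogSch}/S$: strict $\tau$-morphisms pull back to strict $\tau$-morphisms, and log blow-ups pull back to log blow-ups since log modifications are stable under base change. Consequently the standard argument that the sheaf condition is stable under composition of covering families shows that $\mathcal{F}$ satisfies the equalizer condition for \emph{every} cover in the pretopology generated by $\mathcal{B}$. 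By the usual comparison of a topology with a generating pretopology, it then suffices to prove that every $\tau$-cover is refined by a composite of covers from $\mathcal{B}$.

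The key geometric step is this refinement. Let $\{U_i\to U\}_{i\in I}$ be a $\tau$-cover. Using that strict open covers belong to $\mathcal{B}$, we may work Zariski-locally and assume $U$ is quasi-compact and carries a global chart. By the strict-\'etale-local factorization of $\tau$-morphisms (Proposition \ref{basic properties of m-type morphisms}) we may, after refining each $U_i$ by a strict \'etale cover $\{U_i^\alpha\to U_i\}$ (again in $\mathcal{B}$), arrange that each composite $U_i^\alpha\to U$ factors as a strict $\tau$-morphism $U_i^\alpha\to Z_i^\alpha$ followed by a log blow-up $Z_i^\alpha\to U$, with only finitely many log blow-ups $Z_i^\alpha$ occurring. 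Any finite family of log blow-ups of $U$ admits a common refinement $Z\to U$, itself a log blow-up, dominating each $Z_i^\alpha$. Because each $Z_i^\alpha\to U$ is a monomorphism, one has $U_i^\alpha\times_U Z=U_i^\alpha\times_{Z_i^\alpha} Z$, so the projection $U_i^\alpha\times_U Z\to Z$ is a base change of the strict $\tau$-morphism $U_i^\alpha\to Z_i^\alpha$ and hence strict $\tau$; moreover the family $\{U_i^\alpha\times_U Z\to Z\}$ is universally surjective, so it is a strict $\tau$-cover of $Z$. Thus the log blow-up $Z\to U$ followed by this strict $\tau$-cover is a composite of $\mathcal{B}$-covers refining $\{U_i\to U\}$, as required.

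The main obstacle is precisely the finiteness used in extracting the common refinement $Z$: over a general base the log blow-ups of $U$ form only a cofiltered system under refinement, which is exactly what underlies the passage to Kato's valuative space, so the reduction to a quasi-compact base and to finitely many $Z_i^\alpha$ must be carried out with care, and for $\tau=\text{m-fpqc}$ one exploits the quasi-compactness condition (3) of Definition \ref{definition of m-type coverings} directly. Finally, for the small sites $S_{\tau'}$ one checks that all auxiliary objects produced above, namely the strict $\tau'$-local pieces and the common refinement $Z$, remain $\tau'$-type over $S$; this holds because log blow-ups and strict $\tau'$-morphisms of $\tau'$-type $S$-log schemes are again $\tau'$-type over $S$, which is exactly the functoriality singling out m-open and m-\'etale among the topologies considered.
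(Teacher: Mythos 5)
Your overall architecture --- reduce to the Grothendieck topology generated by strict $\tau$-covers and log blow-ups, then refine an arbitrary $\tau$-cover by a log blow-up followed by a strict cover --- is the same as the paper's, and your ``only if'' direction and the monomorphism/common-refinement manipulations are correct as far as they go. However, there is a genuine gap exactly at the step you yourself call ``the main obstacle'': the assertion that only finitely many log blow-ups $Z_i^\alpha\to U$ occur. Quasi-compactness of the underlying scheme of $U$ does not bound the index set: the cover $\{U_i\to U\}_{i\in I}$ may be infinite and each strict \'etale piece $U_i^\alpha$ contributes its own blow-up ideal, and an infinite family of log blow-ups need not admit a common refinement (you would have to blow up a product of infinitely many coherent ideal sheaves, which is no longer coherent; the cofiltered limit of all blow-ups is Kato's valuative space, not a log scheme). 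What you actually need is that a scheme-theoretically quasi-compact $U$ is a quasi-compact \emph{object of the m-site}, i.e.\ that every m-type cover of it admits a finite subcover. This is Proposition \ref{prop: quasi-compact is quasi-compact} of the paper, and it is where the real work lies: its proof passes to the valuative space $U^{\text{val}}$, uses that m-flat, locally finitely presented morphisms induce open maps and that universal surjectivity corresponds to surjectivity on valuative spaces (Lemma \ref{lemma: lemma on valuative morphisms}), and uses spectrality, hence quasi-compactness, of $U^{\text{val}}$ (Theorem \ref{thm: valuative space}). Your proposal neither proves such a statement nor cites one, and ``must be carried out with care'' does not fill this hole; your appeal to condition (3) of Definition \ref{definition of m-type coverings} produces finiteness only for m-fpqc, not for the other four topologies.

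Two further problems, both fixable but real. First, ``work Zariski-locally and assume $U$ carries a global chart'' is not available as stated: the log structures are \'etale, so charts need not exist Zariski-locally; the paper first replaces the base by a log blow-up with Zariski log structure (\cite[Theorem 5.4]{Toric-singularies}) --- itself a cover in your class $\mathcal{B}$ --- and only then takes an open cover. Second, for $\tau=\text{m-open}$ your refinement of each $U_i$ by a strict \'etale cover is illegal: strict \'etale morphisms are not m-open in general (by Proposition \ref{m-type of Kummer is strict}, strict m-open morphisms are precisely local isomorphisms), so such a family is not even a cover in the m-open topology, let alone a member of $\mathcal{B}$; in this case one must instead use the Zariski-local factorization of Remark \ref{m-open local composition}. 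Finally, note that once finiteness is available the paper's endgame is simpler and uniform in $\tau$: rather than factoring each piece and intersecting blow-ups, it applies \cite[Lemma 3.10]{NAKAYAMALOGETALECohomology2} to produce a single log blow-up $S'\to S$ making the base change of the entire finite cover exact, and then concludes strictness from Proposition \ref{m-type of Kummer is strict}. Your common-refinement mechanism would serve as a workable alternative, but only after the finite subcover is secured.
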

\subsection{Subtleties on universal surjectivity}\label{subsection in introduction about universal surjectivity}
A morphism \( f\colon X \rightarrow Y \) of fs log schemes is usually called \emph{universally surjective} if all of its pullbacks \( f_T\colon X \times_Y T \rightarrow T \) along morphisms of fs log schemes \( T \rightarrow Y \) are surjective. 

This notion is well-known from the study of the full log \'etale topology, whose covers are defined to be jointly universally surjective families of log \'etale morphisms \cite{original-log-etale-topology-theory-Fujiwara-Kato, Overview-of-the-work-Kato-School, NAKAYAMALOGETALECohomology2}.
However, in Section \ref{section where shit goes down} we point out errors in the cited works and show that the above notion of universal surjectivity actually leads to bad behavior of the log \'etale site. For example:
\begin{theorema}\label{thm: shit be fucked}
    For any field $k$, the scheme $\mathbb{A}^2_{k}$ equipped with its canonical toric log structure is \emph{not} quasi-compact in the log \'etale site. 
\end{theorema}
We will deduce this by showing that the collection 
    \[
        \left\{\left(\text{Bl}_I\mathbb{A}^2_k \right)_{\text{non-fix}}\rightarrow \mathbb{A}^2_k \right\}_{\emptyset\neq I\subset\mathbb{N}^2 \text{ ideal}}
    \]
of all log blow-ups restricted to the non-torus-fixed points forms a log \'etale cover that does not admit a finite subcover. 

Theorem \ref{thm: shit be fucked} contradicts \cite[Lemma 3.14]{NAKAYAMALOGETALECohomology2}, which claimed that a log scheme is quasi-compact in the log \'etale site if and only if its underlying scheme is quasi-compact. This invalidates much of \cite{NAKAYAMALOGETALECohomology2}, as this claim is heavily used throughout the paper. In particular, we explain in Remark \ref{remark: topology not generated} that, in contradiction to \cite[Proposition 3.9]{NAKAYAMALOGETALECohomology2}, the log \'etale topology is not generated by Kummer \'etale covers and log blow-ups. 

However, in Proposition \ref{prop: quasi-compact is quasi-compact}, we show that \cite[Lemma 3.14]{NAKAYAMALOGETALECohomology2} and hence the rest of \cite{NAKAYAMALOGETALECohomology2} hold if one instead defines log \'etale covers using the following notion of universal surjectivity, which we will use for the rest of our paper:
\begin{definition}\label{definition: universally surjective}
    A morphism $f\colon X\rightarrow Y$ of fs log schemes is \emph{universally surjective} if for any morphism $T\rightarrow Y$ from an \emph{integral} log scheme $T$, the base change $f_T\colon X\times_Y T\rightarrow T$ is surjective.
\end{definition}
This definition agrees with the notion of universal surjectivity that is implicit in \cite{Molcho_Wise_2022}, where the authors work in the category of integral log schemes throughout. Although, in principle, these issues affect all papers that use the log \'etale topology, we expect that all results hold if this change of terminology is made. 

In order to avoid confusion, we use \emph{weak universal surjectivity} to refer to the former definition of universal surjectivity at the beginning of this section.
\subsection{Global integralization by logarithmic blow-ups}
A further main result of this paper is a global integralization theorem by logarithmic blow-ups. We state this result in greater generality than is necessary for this paper as it is also central to \cite[§1]{coherentsheavesinloggyloggsterofdoom} and for the sake of future reference. 
As opposed to other integralization results of this kind, which appear in several forms in the literature (see e.g. \cite{wsrc0,abramovich2020relativedesingularizationprincipalizationideals,herr2025loggeometryliftingrational,quasi-unipotent,kato2022integral,molcho2019universalweaksemistablereduction}), the statement is fully global and does not assume log regularity or the existence of an Artin fan.
\begin{theorema}\label{lemma-log-blow-up-integral}
Let \(f : \mathcal X \to \mathcal Y\) be a morphism of algebraic stacks with fs log structures, and assume that \(\mathcal X\) is quasi-compact. Then there exists a coherent, nowhere empty log ideal \(\mathcal I \subset M_{\mathcal Y}\) such that \(\operatorname{Bl}_{\mathcal I}\mathcal Y\) has free log structure and the base change
\[
\operatorname{Bl}_{f^*\mathcal I}\mathcal X
=
\mathcal X \times_{\mathcal Y} \operatorname{Bl}_{\mathcal I}\mathcal Y
\longrightarrow
\operatorname{Bl}_{\mathcal I}\mathcal Y
\]
is integral.   
\end{theorema}
Here, we call an fs log structure on an algebraic stack $\cX$ \emph{free} if it can be covered by strict fppf neighborhoods that are charted by free monoids. Using this and Proposition \ref{m-type of Kummer is strict}, we obtain:
\begin{corollary}\label{corollary: m can be made non-m by log blowy}
    Let $X\rightarrow Y$ be an m-open (resp. m-\'etale, m-smooth, m-flat) morphism of fs log schemes with $X$ quasi-compact. Then there is a log blow-up $\widetilde{Y}\rightarrow Y$ so that the base change $X\times_Y \widetilde{Y}\rightarrow \widetilde{Y}$ is a strict local isomorphism (resp. \'etale, smooth, flat).
\end{corollary}
We use this corollary in Section \ref{lim of topii} to show that the m-open topos is the limit of the ordinary Zariski topoi of the log blow-ups, see Proposition \ref{proposition: limit of topoi}.
\subsection{Sheaves on valuative log spaces}\label{section: intro valuative space}
Recall from \cite{kato_logarithmic_degeneration} that one can associate to any fs log scheme $X$ a certain log locally ringed space $X^{\text{val}}$ called its \emph{valuative log space}.
It turns out that the valuative log space is closely related to our m-open site:
\begin{theorema}\label{thm: sheaves on val is m}
 There is a natural equivalence of topoi
    \[
       \text{Shv}(X^{\text{val}}) = \text{Shv}\left(X_{\text{mop}}\right).
    \]   
\end{theorema}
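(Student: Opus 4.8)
The plan is to realise both topoi concretely on a common basis of opens and to let the two conditions of Theorem \ref{thm:sheaf_conditions} supply exactly the well-definedness and descent needed to match them.

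First I would recall from \cite{kato_logarithmic_degeneration} that the valuative log space is the cofiltered limit
\[
X^{\text{val}} \;=\; \lim_{(X' \to X)\,\in\,\mathcal{B}_X} X'
\]
over the category $\mathcal{B}_X$ of log blowups $X' \to X$ (with morphisms over $X$), taken in log locally ringed spaces; in particular $\lvert X^{\text{val}}\rvert = \lim_{X'}\lvert X'\rvert$, with projections $\pi_{X'}\colon X^{\text{val}} \to X'$. The category $\mathcal{B}_X$ is cofiltered, since log blowups are stable under composition and any two are dominated by a common refinement (a log blowup of $X'_1\times_X X'_2$), with $X$ terminal in it. The geometric input I would isolate is the invariance $X'^{\text{val}} = X^{\text{val}}$ for every log blowup $X' \to X$, valid because blowups of $X'$ are cofinal among blowups of $X$. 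Consequently, for an open $U' \subseteq X'$ the valuative space $U'^{\text{val}} = \pi_{X'}^{-1}(U')$ is an open of $X^{\text{val}}$, and the opens of this form constitute a basis for the topology of $X^{\text{val}}$. On the other side, the objects of $X_{\text{mop}}$ are precisely the opens of log blowups of $X$ (Definition \ref{m-open morphisms}), so the two sites share the same indexing data.

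Next I would build the comparison directly on this basis. Given a presheaf $\mathcal{F}$ on $X_{\text{mop}}$ satisfying the two conditions of Theorem \ref{thm:sheaf_conditions}, I define a presheaf $\mathcal{G}$ on the basis of $X^{\text{val}}$ by $\mathcal{G}(U'^{\text{val}}) := \mathcal{F}(U')$. This is well defined: if $U'^{\text{val}} = U''^{\text{val}}$ for opens $U' \subseteq X'$ and $U'' \subseteq X''$, then passing to a common refinement $X'''$ both pull back to the same open, and the comparison maps $U''' \to U'$ and $U''' \to U''$ are log blowups, so condition (2) of Theorem \ref{thm:sheaf_conditions} forces $\mathcal{F}(U') = \mathcal{F}(U''') = \mathcal{F}(U'')$. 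Condition (1) (strict, i.e.\ Zariski, descent) then shows that $\mathcal{G}$ satisfies the sheaf axiom, because every open cover of a basic open $U'^{\text{val}}$ refines, after passing to a suitable further blowup, to a strict open cover of some $U'''$, which $\mathcal{F}$ sees as an m-open cover. The inverse construction restricts a sheaf on $X^{\text{val}}$ along $U' \mapsto U'^{\text{val}}$; the invariance $X'^{\text{val}} = X^{\text{val}}$ makes it land in presheaves satisfying (2), and continuity of restriction makes it satisfy (1). These operations are visibly mutually inverse and functorial, yielding the asserted natural equivalence. Conceptually both sides are the filtered colimit of topoi $\operatorname{colim}_{X'} \text{Shv}(X'_{\text{Zar}})$—for $X^{\text{val}}$ via the fact that the sheaf topos of a cofiltered limit of spectral spaces with quasi-compact transition maps is the filtered $2$-colimit of the sheaf topoi (cf.\ SGA4 and the Stacks Project), and for $X_{\text{mop}}$ via the reorganisation of Theorem \ref{thm:sheaf_conditions} just described—and I would use this only to cross-check the direct construction.

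The step I expect to be the main obstacle is verifying that the basic opens $U'^{\text{val}}$ really generate the topology of $X^{\text{val}}$ and that any open cover of $X^{\text{val}}$ is refined by the $U'^{\text{val}}$ coming from a single blowup. This is where spectrality of the underlying spaces of fs log schemes and properness (hence quasi-compactness) of log blowups enter, and where the non-quasi-compact case must be reduced to the quasi-compact, quasi-separated one by gluing over an affine open cover of $X$. The remaining task—matching the mixed covering families of $X_{\text{mop}}$, strict open covers together with log-blowup covers, with honest open covers of $X^{\text{val}}$—is exactly the bridge furnished by Theorem \ref{thm:sheaf_conditions}, and once the basis is understood it is essentially formal.
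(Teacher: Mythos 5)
Your overall strategy---compare both topoi on the basis of opens pulled back from log blow-ups and let the two conditions of Theorem \ref{thm:sheaf_conditions} do the matching---is close in spirit to the paper's proof, which works with the same basis. But there are two concrete gaps. First, the claim that ``the objects of $X_{\text{mop}}$ are precisely the opens of log blowups of $X$'' is false: by Definition \ref{m-open morphisms} an m-open object is only \emph{locally} an open of a log blow-up (Remark \ref{m-open local composition}), so e.g.\ a disjoint union of opens of two different blow-ups, or a gluing of such along overlaps, lies in $X_{\text{mop}}$ but is not an open of any single blow-up. Consequently your two ``mutually inverse'' constructions only relate $\text{Shv}(X^{\text{val}})$ to sheaves on the sub-poset of opens-of-blow-ups; to reach $\text{Shv}(X_{\text{mop}})$ itself you still need a dense-subsite (comparison-lemma) step, or, as the paper does, a morphism of sites $\Phi\colon X_{\text{mop}}\to X^{\text{val}}_{\text{Zar}}$ defined on \emph{all} objects by $T\mapsto T^{\text{val}}$, where $T^{\text{val}}\to X^{\text{val}}$ is in general only a local isomorphism, not an open immersion (Lemma \ref{lemma: lemma on valuative morphisms}). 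Second, everything you use---the limit description of $X^{\text{val}}$ over blow-ups, the local structure of m-open morphisms via Proposition \ref{chart for m-open}---is only available when the log structure of $X$ is Zariski; for a general \'etale fs log structure, $X^{\text{val}}$ is \emph{defined} by first passing to a log blow-up with Zariski log structure (Remark \ref{you can always blow up}). The paper's first step is exactly this reduction via \cite[Theorem 5.4]{Toric-singularies}, together with the observation that both sides glue over open covers of $X$; your proposal silently assumes it.

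It is also worth noting where the arguments genuinely diverge. The paper does not verify sheaf axioms on the basis by hand: it shows $\Phi_*$ is faithful (the basic opens lie in the essential image of $\Phi$) and that the unit $\text{Id}\to\Phi_*\Phi^*$ is an isomorphism, the key external input being \cite[Proposition 1.4.2]{kato_logarithmic_degeneration}, which says that any morphism $U^{\text{val}}\to V^{\text{val}}$ over $X^{\text{val}}$ is induced by a morphism $\widetilde{U}\to V$ for some log blow-up $\widetilde{U}\to U$; combined with $\mathcal{F}(\widetilde{U})=\mathcal{F}(U)$ (your condition (2)) this gives the cofinality that finishes the proof. Your surjectivity-based well-definedness argument is a correct substitute for this at the level of inclusions of opens, and you correctly flag the quasi-compactness/spectrality issues for infinite covers; but until the two points above are repaired, the proposal does not yet yield the stated equivalence of topoi.
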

We will use Theorem \ref{thm: sheaves on val is m} to deduce a vanishing result for sheaf cohomology on $X_{\text{mop}}$. To this end, we introduce the following definition:
\begin{definition}\label{definition of log dimension 1}
    For any fs log scheme $X$, we define its \emph{log dimension} as the Krull dimension of its valuative log space
\[
    \text{log-dim}(X)\coloneqq \dim(X^{\text{val}}).
\]
\end{definition}
Koppensteiner and Talpo define a different notion of log dimension for closed subschemes of log smooth fs log schemes over $\mathbb{C}$ \cite[Definition 2.3]{koppensteiner2019holonomic}. However, this definition differs from ours; for example, the standard log point \(X = \operatorname{Spec}(\mathbb{N} \to \mathbb{C})\) has log dimension 1 in their sense, but dimension 0 in ours.
\begin{corollary}\label{cor: cohomology vanishing}
    For any quasi-compact and quasi-separated fs log scheme $X$ and sheaf $\mathcal{F}$ of abelian groups on $X_{\text{mop}}$, we have $H^{p}(X,\mathcal{F}) = 0$ if $p>\text{log-dim}(X)$.
\end{corollary}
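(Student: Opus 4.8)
The plan is to reduce the cohomology vanishing statement to a classical Grothendieck-type bound on the valuative log space $X^{\text{val}}$, using the equivalence of topoi from Theorem~\ref{thm: sheaves on val is m}. Since $\mathrm{Shv}(X^{\text{val}}) = \mathrm{Shv}(X_{\text{mop}})$, the two sites compute the same sheaf cohomology, so it suffices to prove that $H^p(X^{\text{val}}, \mathcal{F}) = 0$ for $p > \dim(X^{\text{val}})$ for any sheaf of abelian groups $\mathcal{F}$. By the very definition $\text{log-dim}(X) = \dim(X^{\text{val}})$, this is exactly the desired bound, so the whole problem is transported to the topological space underlying the valuative log space.

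First I would recall that $X^{\text{val}}$ is a (log) locally ringed space, so its underlying topological space is what governs sheaf cohomology of abelian sheaves; the log structure plays no role once we have passed to $\mathrm{Shv}(X^{\text{val}})$ as an ordinary sheaf topos. The natural tool is then Grothendieck's theorem (Grothendieck, \emph{Sur quelques points d'alg\`ebre homologique}, Tohoku, Thm.~3.6.5): if $T$ is a \emph{Noetherian} (or more generally spectral/sober, quasi-compact, quasi-separated) topological space of Krull dimension $\le n$, then $H^p(T, \mathcal{F}) = 0$ for all $p > n$ and all abelian sheaves $\mathcal{F}$. So the key step is to verify that the topological space of $X^{\text{val}}$ satisfies the hypotheses of this theorem: namely that, for $X$ quasi-compact and quasi-separated, $X^{\text{val}}$ is a spectral (or Noetherian) space so that the Grothendieck bound applies, and that its Krull dimension is finite and equals $\text{log-dim}(X)$ by definition.

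The key steps in order are therefore: (1) invoke Theorem~\ref{thm: sheaves on val is m} to identify $H^p(X, \mathcal{F})$ computed on $X_{\text{mop}}$ with $H^p(X^{\text{val}}, \mathcal{F})$; (2) establish that the underlying space of $X^{\text{val}}$ is spectral, using that it is the cofiltered limit of the underlying spaces of log blow-ups (equivalently log modifications) of $X$ along the m-open topology --- each such blow-up has quasi-compact quasi-separated underlying scheme since $X$ does, and a cofiltered limit of spectral spaces with spectral transition maps is again spectral; (3) apply Grothendieck's vanishing theorem for spectral spaces of bounded Krull dimension; (4) read off the bound $p > \dim(X^{\text{val}}) = \text{log-dim}(X)$.

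The main obstacle I expect is step (2): verifying that $X^{\text{val}}$ is genuinely spectral (sober, quasi-compact, quasi-separated, with a basis of quasi-compact opens closed under finite intersection), rather than merely a locally ringed space, and that Grothendieck's theorem applies in the exact generality needed. The cleanest route is Kato's description of $X^{\text{val}}$ as an inverse limit $\varprojlim_i X_i$ over log modifications $X_i \to X$ (cf.~\cite{kato_logarithmic_degeneration}); one then uses the permanence of spectrality under cofiltered limits and the fact that sheaf cohomology commutes with such limits (Grothendieck--Verdier, i.e.\ $H^p(\varprojlim_i X_i, \mathcal{F}) = \varinjlim_i H^p(X_i, \mathcal{F}_i)$ for suitable systems). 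Since each $X_i$ is quasi-compact quasi-separated and, crucially, the Krull dimension of the limit is bounded by $\text{log-dim}(X)$ by definition, the Grothendieck bound passes to the limit. The delicate point requiring care is ensuring finite Krull dimension and the correct spectrality hypotheses in the limit, so that the abstract vanishing theorem can be invoked without circularity.
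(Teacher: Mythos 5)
Your proposal is correct and takes essentially the same route as the paper: the paper's proof likewise combines the topos equivalence of Theorem \ref{thm: sheaves on val is m} with the spectrality of $X^{\text{val}}$ from Theorem \ref{thm: valuative space}\eqref{item: properties of valuative space} (itself established via the cofiltered-limit description you sketch), and then cites a cohomology vanishing theorem for spectral spaces of bounded Krull dimension. The one point to be careful about is that $X^{\text{val}}$ is essentially never Noetherian, so Grothendieck's Tohoku theorem itself does not apply and one genuinely needs its spectral-space generalization (due to Scheiderer, which is the reference the paper invokes); granting that, your argument goes through, and the auxiliary limit formula for cohomology along cofiltered limits that you mention as a fallback is not needed.
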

In order to make Corollary \ref{cor: cohomology vanishing} usable in practice, we give the following more explicit description for locally noetherian fs log schemes.
\begin{theorema}\label{Theorem on log dimensions}
If $X$ is a locally noetherian fs log scheme, then we have
    \begin{equation}\label{equation: log-dim if loc noeth}
        \text{log-dim}(X) =  \underset{x\in X}{\sup}\left[ \dim \overline{\{x\}} + \max\left(\text{rank} (\overline{\mathcal{M}}_{X,\overline{x}}^{gp})-1,0\right)\right].
    \end{equation}
\end{theorema}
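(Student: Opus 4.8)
The plan is to compute the Krull dimension of $|X^{\mathrm{val}}|$ directly from its standard description as the cofiltered limit $|X^{\mathrm{val}}| = \varprojlim_{X'\to X}|X'|$ over all log blow-ups $X'\to X$, bounding specialization chains from both sides by the right-hand side of \eqref{equation: log-dim if loc noeth}. All arguments are carried out étale-locally around a point $x\in X$, which suffices since both the valuative space and Krull dimension are étale-local. Locally a chart $Q=\overline{\mathcal{M}}_{X,\overline{x}}$ (sharp fs of rank $r_x$) furnishes a strict morphism $X\to\mathbb{A}_Q$ carrying $x$ into the closed torus orbit (the stalk being all of $Q$ means nothing is inverted at $x$), and every log blow-up of $X$ is pulled back from a toric blow-up $\tilde{\mathbb{A}}_\Sigma\to\mathbb{A}_Q$ attached to a subdivision $\Sigma$ of the cone of $Q$.

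For the upper bound I first record that for a single log blow-up $g\colon X'\to X$ the fibre over $x$ has dimension at most $r_x-1$. By strictness and base change this fibre is a base change of the fibre of the toric map $\tilde{\mathbb{A}}_\Sigma\to\mathbb{A}_Q$ over a point of the closed orbit, and the orbit--cone correspondence shows the deepest such fibre has dimension $r_x-1$ (already an interior ray of the $r_x$-dimensional cone produces an orbit of codimension one). Plugging $\dim_{x'}g^{-1}(g(x'))\le r_x-1$ into the fibre-dimension inequality $\dim\overline{\{x'\}}\le\dim\overline{\{g(x')\}}+\dim_{x'}g^{-1}(g(x'))$ for the finite-type morphism $g$ yields $\dim X'\le\sup_{x}\{\dim\overline{\{x\}}+\max(r_x-1,0)\}$ for every $X'$. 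Since any finite strict specialization chain in a cofiltered limit of spectral spaces already becomes strict in a single common refinement (for each step choose a level witnessing strictness and pass to a dominating blow-up), we obtain $\dim|X^{\mathrm{val}}|\le\sup_{X'}\dim X'$, and hence the desired upper bound.

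For the lower bound I fix $x$ with $r_x\ge 1$ and exhibit one blow-up realizing the value $\dim\overline{\{x\}}+(r_x-1)$. The star subdivision of the cone of $Q$ along an interior ray $\rho$ produces a blow-up $X'\to X$ whose exceptional divisor $E=\overline{O_\rho}$ maps onto the stratum closure $\overline{\{x\}}$ with toric fibres $O_\rho$ of dimension $r_x-1$; consequently $E$ has a component through the fibre over $x$ of dimension $\dim\overline{\{x\}}+(r_x-1)$, giving a specialization chain of that length in $X'$. I then lift this chain to $|X^{\mathrm{val}}|$ using that the projection $|X^{\mathrm{val}}|=|(X')^{\mathrm{val}}|\to|X'|$ is surjective and lifts specializations: it is a cofiltered limit of proper surjections onto $X'$, hence universally closed, and a standard quasi-compactness argument on the cofiltered system of nonempty closed fibre-intersections produces the required lift. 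Taking the supremum over $x$ (the case $r_x=0$ being the trivial lift of a maximal chain in $\overline{\{x\}}\subseteq X$) and combining with the previous paragraph gives \eqref{equation: log-dim if loc noeth}.

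The main obstacle is the fibre-dimension bound together with its sharpness. One must justify carefully that, étale-locally, log blow-ups of $X$ are precisely pullbacks of toric blow-ups of $\mathbb{A}_Q$ and that $x$ lands in the closed orbit, so that the $(r_x-1)$ bound on toric fibres transfers to $X'$; and, for the lower bound, that the exceptional divisor of the star subdivision genuinely has fibres of full dimension $r_x-1$ over the generic point of $\overline{\{x\}}$ without collapsing, so that its dimension is \emph{exactly} $\dim\overline{\{x\}}+(r_x-1)$. The remaining inputs---strictness of the chart morphism, the comparison of Krull dimension with the supremum over the limit, and specialization lifting along the universally closed map $X^{\mathrm{val}}\to X'$---are comparatively routine.
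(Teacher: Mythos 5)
Your proposal is correct and takes essentially the same route as the paper: both reduce $\text{log-dim}(X)$ to the supremum of dimensions of log blow-ups via the cofiltered-limit description of $X^{\text{val}}$ (finite chains descend to a finite level, and lift back along the closed surjective projections), both prove the upper bound by the toric fiber bound $\leq \operatorname{rank}-1$ fed into a fiber-dimension inequality, and both prove the lower bound by exhibiting an explicit log blow-up whose exceptional locus over $\overline{\{x\}}$ has fibers of dimension $\operatorname{rank}-1$. The only differences are cosmetic: the paper blows up the irrelevant ideal $V(P\setminus\{1\})$ rather than performing a star subdivision, bounds toric fibers via birationality rather than the orbit--cone correspondence, and settles the dimension count you flag as the main obstacle by noting that the exceptional locus is flat of pure relative dimension $\operatorname{rank}-1$ over $p^{-1}\overline{\{x\}}$ and invoking Stacks Project Tag 0AFE.
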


\subsection{Conventions}
Throughout this paper, we ignore set-theoretical issues associated with large sites. All log schemes will be endowed with \'etale log structures that locally admit charts. An integral (resp. saturated, valuative, fs) log scheme is a log scheme that is covered by charts of integral (resp. saturated, valuative, fine and saturated) monoids. All charts in this paper are fine and saturated unless stated otherwise. We denote the category of fs log schemes by $\mathsf{fsLogSch}$ and the category of schemes by $\mathsf{Sch}$. Moreover, we will say that the log structure of a fs log scheme is \emph{Zariski} if it can be covered by open subsets that admit global charts. Because of \cite[Proposition III.1.4.1]{lecturesonlogarithmicalgebraicgeometry}, we may (and will) identify these with log structures defined on the Zariski topology. 

We largely follow the notation established in the standard reference \cite{lecturesonlogarithmicalgebraicgeometry}. For a log scheme $X$, we denote by $\underline{X}$ its underlying scheme, by $\mathcal{M}_X$ its log structure, and by $\overline{\mathcal{M}}_X = \mathcal{M}_X/\mathcal{O}^\times_X$ its characteristic sheaf. More generally, for any monoid $P$ we write $\overline{P} = P/P^\times$, where $P^\times\subset P$ is the submonoid of units. We also use $\operatorname{Spec}(P\rightarrow A)$ to denote the scheme $\operatorname{Spec}(A)$ equipped with the log structure induced by a morphism of monoids $P \to A$. In particular, we write $\mathbb{A}_P = \text{Spec}(P\rightarrow \mathbb{Z}[P])$ for short.

\subsection{Acknowledgments}
The second author is grateful to the Max Planck Institute for Mathematics in Bonn for its hospitality and financial support for part of the writing process of this paper. This paper is part of the Ph.D.\ thesis of X.H., who would like to thank his advisors, Christian Liedtke and Helge Ruddat, for their unwavering support. We thank Chikara Nakayama for very helpful correspondence and confirming the correctness of Theorem \ref{thm: shit be fucked}. We thank Simon Felten and Weikai Hu for pointing us to the pertinent paper by Binda, Park, and \O stv\ae r. We also thank Simon Felten and Helge Ruddat for useful discussions on Theorem \ref{thm: shit be fucked} and Doosung Park for catching several typos in this paper. We further thank Adam Dauser and Tianyi Feng for suggesting the title for this paper. This work began during X.H.'s visit to the University of Stavanger, whose hospitality is gratefully acknowledged. 

During the preparation of this paper, X.H. was supported by the Deutsche Forschungsgemeinschaft (DFG, German Research Foundation) -- Project No.516701553.
During part of the preparation of this paper, M.S. was supported by the European Research Council (ERC) through the starting grant `Correspondences in enumerative geometry: Hilbert schemes, K3 surfaces and modular forms' (Grant Agreement No.101041491).

\section{Abstract log modifications and m-type morphisms}
The main goal of this section is to prove basic properties of m-type morphisms and abstract log modifications. In particular, we show that abstract log modifications are proper and m-type morphisms can locally be written as compositions of log blow-ups and strict morphisms of the same type. Furthermore, exact m-type morphisms are strict.

\subsection{Log modifications} 
Several different definitions of \enquote{log modification} appear in the literature \cite{abramovich2020decomposition,abramovich2018birational,Schmitt2025logarithmic,Molcho_Wise_2022}. The one that is best suited for our purposes is inspired by the work of Doosung Park \cite{park2024inverting}:
\begin{definition}\label{defi: log mod}
A morphism $f\colon X\rightarrow Y$ of fs log schemes is called an \emph{abstract log modification} if $f$ is a universally surjective log \'etale monomorphism.
\end{definition}
For example, toric modifications and log blow-ups are abstract log modifications (see \cite[Example 2.7]{park2024inverting} and \cite[Theorem III.2.6.3(3)]{lecturesonlogarithmicalgebraicgeometry}).
We further show that any abstract log modification is automatically proper (see Proposition \ref{prop: log mods are proper}) and thus abstract log modifications coincide with the \emph{dividing covers} of \cite{Triangulated-categories-of-logarithmic-motives-over-a-field, park2019equivariant, park2024inverting}.

Throughout this paper, we call a morphism of fs log schemes universally surjective if all of its base changes in the category of integral log schemes are surjective. This is strictly stronger than the usual universal surjectivity in the category of fs log schemes (see Subsection \ref{subsection in introduction about universal surjectivity} for more details).

\subsection{Abstract log modifications and m-open morphisms}
It turns out that the m-open morphisms admit a chart criterion if the target has a Zariski log structure.
\begin{proposition}\label{chart for m-open}
Let $f\colon X\rightarrow Y$ be a morphism of fs log schemes where the log structure of $Y$ is Zariski. Then $f$ is m-open if and only if it satisfies the following chart criterion:\\
For any $x\in X$ and any open neighborhood $V\subset Y$ of $f(x)$ admitting a chart $Q\rightarrow \Gamma(V,\mathcal{M}_Y)$, there exists an open neighborhood $U\subset X$ of $x$ so that $f(U)\subset V$ and there is a morphism of charts
    \[
    \begin{tikzcd}
        Q \arrow[r] \arrow[d, "\theta"'] & \Gamma(V, \mathcal{M}_{Y}) \arrow[d] \\
        P \arrow[r] & \Gamma(U, \mathcal{M}_X)
    \end{tikzcd}
    \]
    such that:
    \begin{enumerate}
        \item the morphism $\theta$ induces an isomorphism $Q^{\mathrm{gp}} \cong P^{\mathrm{gp}}$;
        \item the induced morphism $U \to V \times_{\mathbb{A}_Q} \mathbb{A}_P$ is a strict open immersion.
    \end{enumerate}
\end{proposition}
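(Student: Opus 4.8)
The plan is to prove the two implications separately. The reverse direction (chart criterion $\Rightarrow$ m-open) will reduce to a short monoid computation, while the forward direction (m-open $\Rightarrow$ chart criterion) carries all the content, the crux being that the monomorphism condition rigidifies the usual log étale chart data.

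\textbf{Reverse direction.} Assume the chart criterion holds. For each $x\in X$ I would choose an open $V\ni f(x)$ with a chart $Q\to\Gamma(V,\mathcal{M}_Y)$ (possible since $Y$ is Zariski) and take the resulting open $U\ni x$ together with $\theta\colon Q\to P$. The first observation is that $\mathbb{A}_P\to\mathbb{A}_Q$ is a log étale monomorphism: since $\theta^{\mathrm{gp}}$ is an isomorphism, Kato's chart criterion gives log étaleness (the map $\mathbb{A}_P\to\mathbb{A}_Q\times_{\mathbb{A}_Q}\mathbb{A}_P=\mathbb{A}_P$ is the identity and $\ker\theta^{\mathrm{gp}},\operatorname{coker}\theta^{\mathrm{gp}}$ vanish), while the fs pushout $P\oplus^{\mathrm{fs}}_Q P$ computes to $P$, so the diagonal of $\mathbb{A}_P$ over $\mathbb{A}_Q$ is an isomorphism. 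Both properties are stable under base change, so $V\times_{\mathbb{A}_Q}\mathbb{A}_P\to V$ is again a log étale monomorphism; composing with the strict open immersion $U\hookrightarrow V\times_{\mathbb{A}_Q}\mathbb{A}_P$ and the open immersion $V\hookrightarrow Y$ shows that $U\to Y$ is a log étale monomorphism. As the chosen $U$ cover $X$, the morphism $f$ is m-open.

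\textbf{Forward direction and the key lemma.} Assume $f$ is m-open, fix $x$ and a chart $Q$ on $V\ni f(x)$, pick an open $W\ni x$ with $W\to Y$ a log étale monomorphism, and replace $W$ by $W\cap f^{-1}(V)$ so as to obtain a log étale monomorphism $g\colon W\to V$. The key lemma is that such a $g$ forces $\overline{g}^{\mathrm{gp}}\colon\overline{\mathcal{M}}^{\mathrm{gp}}_{V,\overline{y}}\to\overline{\mathcal{M}}^{\mathrm{gp}}_{W,\overline{x}}$ to be an isomorphism at every point. For a log étale map this homomorphism of free abelian groups is automatically injective with finite cokernel $C$; computing the characteristic group of $W\times_V W$ at the diagonal as the group pushout, which equals $\overline{\mathcal{M}}^{\mathrm{gp}}_{W,\overline{x}}\oplus C$, the requirement that the diagonal be an isomorphism (the monomorphism condition) forces $C=0$. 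I expect this step to be the main obstacle, as it is exactly what separates m-open morphisms from arbitrary log étale morphisms.

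\textbf{Finishing.} Granting the lemma, I would build the chart by hand. Letting $\varphi\colon Q^{\mathrm{gp}}\to\overline{\mathcal{M}}^{\mathrm{gp}}_{W,\overline{x}}$ be the composite of the chart map with $\overline{g}^{\mathrm{gp}}$ (an isomorphism onto $\overline{\mathcal{M}}^{\mathrm{gp}}_{W,\overline{x}}$), set $P\coloneqq\{q\in Q^{\mathrm{gp}}:\varphi(q)\in\overline{\mathcal{M}}_{W,\overline{x}}\}$. Then $P$ is fs with $P^{\mathrm{gp}}=Q^{\mathrm{gp}}$, the inclusion $\theta\colon Q\to P$ has $\theta^{\mathrm{gp}}$ an isomorphism, and $\varphi$ restricts to an isomorphism $P\cong\overline{\mathcal{M}}_{W,\overline{x}}$, so $P$ is a neat chart. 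Two issues remain. To place this on a Zariski open, I note that $\overline{g}^{\mathrm{gp}}$ being an isomorphism of sheaves together with $V$ Zariski makes the monodromy on $\overline{\mathcal{M}}^{\mathrm{gp}}_W$, hence on $\overline{\mathcal{M}}_W$, trivial, so by \cite[Proposition III.1.4.1]{lecturesonlogarithmicalgebraicgeometry} the log structure of $W$ is Zariski and the neat chart extends to a Zariski open $U\ni x$. Finally, I claim the induced $U\to V\times_{\mathbb{A}_Q}\mathbb{A}_P$ is a strict open immersion: strictness holds because the fs pushout computing the target's characteristic collapses to $P=\overline{\mathcal{M}}_{W,\overline{x}}$, and log étaleness of $g$ together with cancellation makes this strict map étale on underlying schemes; moreover $U\to V$ is a monomorphism (a restriction of $g$) and $V\times_{\mathbb{A}_Q}\mathbb{A}_P\to V$ is a monomorphism, so $U\to V\times_{\mathbb{A}_Q}\mathbb{A}_P$ is a monomorphism, and an étale monomorphism is an open immersion. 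This produces precisely the asserted chart.
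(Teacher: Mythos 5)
Your reverse direction is correct and is essentially the paper's argument (the computation $P\oplus^{\mathrm{fs}}_Q P=P$ is exactly the ``construction of the fiber product in the category of fs log schemes'' that the paper invokes, alternatively Lemma \ref{Lemma for special type of morphisms of monoids}). The forward direction, however, contains a genuine gap: your key lemma is false. For a log \'etale monomorphism $g\colon W\to V$, the map $\overline{g}^{\mathrm{gp}}\colon\overline{\mathcal{M}}^{\mathrm{gp}}_{V,\overline{g(x)}}\to\overline{\mathcal{M}}^{\mathrm{gp}}_{W,\overline{x}}$ need not be injective, let alone an isomorphism. The counterexample is the very class of maps this proposition is built to capture: a log blow-up. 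Take $W=\operatorname{Bl}_0(\mathbb{A}^2)\to V=\mathbb{A}^2$ with toric log structures; this is a log \'etale monomorphism (log blow-ups are monomorphisms, as the paper uses in Proposition \ref{prop: log mods are proper}), but at a point $x$ in the interior of the exceptional divisor one has $\overline{\mathcal{M}}_{W,\overline{x}}\cong\mathbb{N}$, $\overline{\mathcal{M}}_{V,\overline{g(x)}}\cong\mathbb{N}^2$, and the map on groups is $(a,b)\mapsto a+b\colon\mathbb{Z}^2\to\mathbb{Z}$: surjective, not injective. Your supporting claim that for a log \'etale map this homomorphism is ``automatically injective with finite cokernel'' is the defining property of \emph{Kummer} \'etale morphisms, not of log \'etale morphisms in general, and the computation of the characteristic group of $W\times_V W$ at the diagonal as $\overline{\mathcal{M}}^{\mathrm{gp}}_{W,\overline{x}}\oplus C$ is likewise the Kummer picture. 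The monomorphism hypothesis kills precisely this Kummer defect; what survives in general is \emph{surjectivity} of $\overline{g}^{\mathrm{gp}}$ (the blow-up behavior), which is the opposite of what you derived. Had your lemma been true, log blow-ups would fail to be log \'etale monomorphisms, and the notion of m-open morphism would collapse to that of a local isomorphism.

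Because the isomorphism is used throughout your finishing step, the argument does not go through as written: with only surjectivity, $P\coloneqq\varphi^{-1}(\overline{\mathcal{M}}_{W,\overline{x}})$ is still the right monoid (it is fs by Gordan's lemma, contains $Q$, and has $P^{\mathrm{gp}}=Q^{\mathrm{gp}}$), but it is not sharp, so it is not a neat chart and $\varphi|_P$ is not an isomorphism; moreover your derivation that $W$ has Zariski log structure (via trivial monodromy on $\overline{\mathcal{M}}^{\mathrm{gp}}_W$) and your strictness argument for $U\to V\times_{\mathbb{A}_Q}\mathbb{A}_P$ both presuppose the false lemma. The missing content is exactly the surjectivity of $Q^{\mathrm{gp}}\to\overline{\mathcal{M}}^{\mathrm{gp}}_{W,\overline{x}}$ together with the verification that the resulting $P$ yields a chart making $U\to V\times_{\mathbb{A}_Q}\mathbb{A}_P$ a strict open immersion. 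The paper handles this differently: it first proves Proposition \ref{prop: mono Zariski} (a monomorphism into a Zariski log scheme has Zariski log structure) by an argument with the terminal Zariski log structure and automorphisms of stalks, and then cites the chart lemma of Binda--Park--{\O}stv{\ae}r (Lemma A.11.3 of that reference), which is where the construction you attempted is actually carried out. If you want a self-contained proof, that lemma is the statement you need to reconstruct, starting from surjectivity rather than injectivity.
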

Before we give the proof, we first establish the following technical result:
\begin{proposition}\label{prop: mono Zariski}
Let $f\colon X\rightarrow Y$ be a monomorphism between fs log schemes. If the log structure on $Y$ is Zariski, then the log structure of $X$ is also Zariski.
\end{proposition}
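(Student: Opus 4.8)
The plan is to reduce everything to the standard monodromy criterion for Zariski log structures: by the criterion underlying \cite[\S III.1.4]{lecturesonlogarithmicalgebraicgeometry}, the log structure of a fine log scheme $X$ is Zariski if and only if for every point $x\in X$ the natural action of the absolute Galois group $G_x=\operatorname{Gal}(\kappa(\bar x)/\kappa(x))$ on the stalk $\overline{\mathcal{M}}_{X,\overline{x}}$ is trivial. So I fix $x\in X$, set $y=f(x)$, choose compatible geometric points $\overline{x}\to\overline{y}$, and write $P=\overline{\mathcal{M}}_{X,\overline{x}}$, $Q=\overline{\mathcal{M}}_{Y,\overline{y}}$ and $\phi\colon Q\to P$ for the stalk map. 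The map $\phi$ is equivariant for $G_x\to G_y$, and since $Y$ is Zariski, $G_y$ — hence $G_x$ — acts trivially on $Q$ and on $Q^{gp}$. It remains to deduce that $G_x$ acts trivially on $P$.

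First I would reduce this to a statement on associated groups. Because $X$ is fs, $P^{gp}$ is a finitely generated free abelian group on which $G_x$ acts by automorphisms, fixing the subgroup $\phi(Q^{gp})$ pointwise. Hence it suffices to show that $\phi(Q^{gp})$ has finite index in $P^{gp}$, equivalently that the cokernel $K\coloneqq\operatorname{coker}(\phi^{gp})$ is torsion: for then any $v\in P^{gp}$ has some multiple $nv\in\phi(Q^{gp})$, which is $G_x$-fixed, so $n\,g(v)=g(nv)=nv$ for all $g\in G_x$, and torsion-freeness of $P^{gp}$ forces $g(v)=v$. Triviality of the action on $P^{gp}$ then gives triviality on $P\subseteq P^{gp}$, which is what we want.

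The remaining task — proving that $K$ is torsion — is the crux, and it is here that the monomorphism hypothesis enters. I would test $f$ against the fs log point $x^{\dagger}$ with underlying scheme $\operatorname{Spec}\Omega$, where $\Omega=\kappa(\overline{x})$, and log structure associated to $P\to\Omega$ sending $P\smallsetminus\{0\}$ to $0$. Since $P^{gp}$ is free, the morphisms $x^{\dagger}\to X$ lying over the fixed map $\operatorname{Spec}\Omega\to\underline{X}$ and compatible with the characteristic monoids form a nonempty torsor under $\operatorname{Hom}(P^{gp},\Omega^{\times})$ (two liftings of the log structure differ by a character), and likewise the compatible morphisms $x^{\dagger}\to Y$ form a torsor under $\operatorname{Hom}(Q^{gp},\Omega^{\times})$; composition with $f$ is a map of torsors equivariant along $(\phi^{gp})^{\ast}\colon\operatorname{Hom}(P^{gp},\Omega^{\times})\to\operatorname{Hom}(Q^{gp},\Omega^{\times})$. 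As $x^{\dagger}$ is fs, the monomorphism property makes this map of torsors injective, which forces $(\phi^{gp})^{\ast}$ to be injective, i.e. $\operatorname{Hom}(K,\Omega^{\times})=0$. Because $\Omega$ is separably closed, $\Omega^{\times}$ is nontrivial and contains all prime-to-$p$ roots of unity, so $\operatorname{Hom}(K,\Omega^{\times})=0$ rules out any free part and any prime-to-$p$ torsion in $K$; hence $K$ is a finite $p$-group, and in particular torsion, as needed.

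I expect this last step to be the main obstacle, namely extracting from the abstract monomorphism condition the concrete fact that $\phi$ is rationally surjective on associated groups. The torsor-of-liftings formulation is the cleanest route I see; one must be mildly careful in positive characteristic, where $\operatorname{Hom}(-,\Omega^{\times})$ cannot detect $p$-torsion, but this is harmless since the monodromy argument of the second paragraph only needs $K$ to be torsion. An alternative would be to compute the characteristic monoid of the fs fibre product $X\times_Y X$ at the image of the diagonal and invoke that $\Delta_f$ is an isomorphism, but the lifting-torsor argument avoids the bookkeeping of saturations and localizations.
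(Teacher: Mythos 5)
Your proof rests on a false criterion, and this is a genuine gap. For fine (even fs) \'etale log structures, triviality of all residue-field Galois actions on the stalks $\overline{\mathcal{M}}_{X,\overline{x}}$ does \emph{not} imply that the log structure is Zariski, so the reduction in your first paragraph already loses the game (in particular, this cannot be the criterion ``underlying'' \cite[\S III.1.4]{lecturesonlogarithmicalgebraicgeometry}). Concretely, let $C$ be an irreducible nodal cubic over $\mathbb{C}$ with its standard fs log structure, the one making it log smooth over the standard log point. Every group $G_x$ acts trivially on every stalk: away from the node $\overline{\mathcal{M}}_C$ is the constant sheaf $\mathbb{N}$, and the node $x$ is a $\mathbb{C}$-point. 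Yet this log structure is not Zariski: the two branches at $x$ are separated \'etale-locally but not Zariski-locally, so a section of $\overline{\mathcal{M}}_C$ over a Zariski neighborhood $V$ of $x$ is constant, say equal to $c\in\mathbb{N}$, on $V\setminus\{x\}$ (which is connected since $C$ is irreducible), and pulling back to an \'etale neighborhood where the branches are disjoint shows its stalk at $\overline{x}$ must be $(c,c)\in\mathbb{N}^2=\overline{\mathcal{M}}_{C,\overline{x}}$; hence no Zariski-local chart can surject onto $\overline{\mathcal{M}}_{C,\overline{x}}$. The obstruction is monodromy that permutes \'etale-local branches along specializations, which stalkwise Galois actions do not see. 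This is exactly why the paper instead uses the criterion of \cite[Proposition 2.8]{Toric-singularies}: $X$ is Zariski iff no stalk $\overline{\mathcal{M}}_{X,\overline{x}}$ admits a nontrivial automorphism fixing $\overline{\mathcal{M}^{\text{Zar}}}_{X,\overline{x}}\rightarrow\overline{\mathcal{M}}_{X,\overline{x}}$, where $\mathcal{M}^{\text{Zar}}_X=\epsilon^*\epsilon_*\mathcal{M}_X$; note that the branch swap of $\mathbb{N}^2$ fixes the diagonal pointwise, so it is detected by this criterion but not by yours.

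By contrast, your torsor argument --- the part you call the crux --- is correct, and it is genuinely close to the paper's own use of the monomorphism hypothesis: the paper likewise tests $f$ against log points on $\operatorname{Spec}\kappa(\overline{x})$, via the splitting $\mathcal{M}_{\overline{x}^{\text{\'et}}}=k^\times\oplus\overline{\mathcal{M}}_{X,\overline{x}}$ of \cite[Remark III.1.5.3]{lecturesonlogarithmicalgebraicgeometry}. Your computation really does show that $K=\operatorname{coker}(\phi^{gp})$ is a finite $p$-group, hence that $\phi^{gp}(Q^{gp})$ has finite index in $P^{gp}$, and it can be promoted to a complete proof: since $Y$ is Zariski, $f$ factors through the counit $X\rightarrow X^{\text{Zar}}=(\underline{X},\epsilon^*\epsilon_*\mathcal{M}_X)$ (this is the first step of the paper's proof), so $\phi(Q)$ lies in the image of $\overline{\mathcal{M}^{\text{Zar}}}_{X,\overline{x}}$; any automorphism $\sigma$ of $P$ fixing that image then fixes the finite-index subgroup $\phi^{gp}(Q^{gp})\subset P^{gp}$ pointwise, whence $\sigma^{gp}=\mathrm{id}$ by torsion-freeness of $P^{gp}$ and $\sigma=\mathrm{id}$, verifying the correct criterion. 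The paper short-circuits the index computation altogether: it lifts a putative $\sigma$ to an automorphism of the log point $\overline{x}^{\text{\'et}}$ over $\overline{x}^{\text{Zar}}$ and kills it directly with the monomorphism property. So what your proposal is missing is not the log-point technique, which you have, but the right descent criterion to feed it into.
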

\begin{proof}
Let $\underline{X}^{\text{\'et}}$ and $\underline{X}^{\text{Zar}}$ be the small \'etale and Zariski sites of the underlying scheme of $X$ and $\epsilon_*\colon \text{LogStr}(\underline{X}^{\text{\'et}}) \rightarrow \text{LogStr}(\underline{X}^{\text{Zar}})$ and $\epsilon^*\colon \text{LogStr}(\underline{X}^{\text{Zar}}) \rightarrow \text{LogStr}(\underline{X}^{\text{\'et}})$ be the associated pushforwards and pullbacks of log structures. Let $X^{\text{Zar}}$ denote the scheme $\underline{X}$ equipped with the log structure $\mathcal{M}_X^{\text{Zar}} \coloneqq \epsilon^*\epsilon_*\mathcal{M}_X$. There is a natural morphism $\phi\colon X\rightarrow X^{\text{Zar}}$ induced by the counit $\mathcal{M}_X^{\text{Zar}}\rightarrow \mathcal{M}_X$. Since $\epsilon^*$ is fully faithful, one can see that $\mathcal{M}_X^{\text{Zar}}$ is terminal among Zariski log structures mapping to $\mathcal{M}_X$. As a result, $X^{Zar}$ is initial among Zariski log schemes under $X$. Since $Y$ is Zariski, it follows that $f$ factors over $\phi$; hence $\phi$ is also a monomorphism\footnote{Note that $\mathcal{M}_X^{\text{Zar}}$ may not be quasi-coherent, so $\phi$ only exists in a much bigger category of log schemes. By saying that $\phi$ is a monomorphism, we mean only that for any fs log scheme $T$ and morphisms $T \substack{f\\ \rightrightarrows \\ g} X$ that equalize $\phi$, we must have $f=g$.}.

In order to show that $\phi$ is an isomorphism, it suffices by \cite[Proposition 2.8]{Toric-singularies} to show that for any $x\in X$, there is no non-trivial automorphism of $\overline{\mathcal{M}}_{X,\overline{x}}$ that fixes $\overline{\mathcal{M}^{\text{Zar}}}_{X,\overline{x}}\rightarrow \overline{\mathcal{M}}_{X,\overline{x}}$. Since $\phi$ is a monomorphism, the base change $\overline{x}^{\text{\'et}}\rightarrow \overline{x}^{\text{Zar}}$ of $\phi$ along the geometric point $\overline{x}^{\text{Zar}}=\text{Spec}(k) \rightarrow X^{\text{Zar}}$ equipped with the induced log structure is also a monomorphism. By \cite[Remark III.1.5.3]{lecturesonlogarithmicalgebraicgeometry}, the log structure on $\overline{x}^{\text{\'et}}$ splits into $\mathcal{M}_{\overline{x}^{\text{\'et}}} = k^\times \oplus \overline{\mathcal{M}}_{X,\overline{x}}$. Hence any automorphism of $\overline{\mathcal{M}}_{X,\overline{x}}$ that fixes $\overline{\mathcal{M}^{\text{Zar}}}_{X,\overline{x}}$ can be lifted to an automorphism of $\mathcal{M}_{\overline{x}^{\text{\'et}}}$ that fixes $\mathcal{M}_{\overline{x}^{\text{Zar}}}$. But since $\overline{x}^{\text{\'et}}\rightarrow \overline{x}^{\text{Zar}}$ is a monomorphism, this automorphism must have been the identity, which concludes the proof.
\end{proof}
\begin{remark}\label{remark: m-open means Zariski}
    In particular, this shows that if $f\colon X\rightarrow Y$ is m-open and $Y$ has a Zariski log structure, then $X$ has a Zariski log structure as well.
\end{remark}
\begin{proof}[Proof of Proposition \ref{chart for m-open}]
If $f$ is m-open, it is enough to check the chart criterion on an open cover of $X$, hence we may assume that $f$ is a log \'etale monomorphism. By Proposition \ref{prop: mono Zariski}, $X$ will then also have a Zariski log structure and \cite[Lemma A.11.3]{Triangulated-categories-of-logarithmic-motives-over-a-field} gives us the desired charts. Conversely, the chart criterion implies that $f$ admits, Zariski-locally, a factorization of the shape $X\hookrightarrow Y\times_{\mathbb{A}_Q}\mathbb{A}_P\rightarrow Y$ with $P^{gp} = Q^{gp}$, where the first map is a strict open immersion. Such a composite is log \'etale and to show that it is a monomorphism, one can use the construction of the fiber product in the category of fs log schemes or alternatively Lemma \ref{Lemma for special type of morphisms of monoids}.
\end{proof}
\begin{lemma}\label{Lemma for special type of morphisms of monoids}
Let $Q\rightarrow R$ be a pre-log ring and $Q\rightarrow P$ a morphism of fine, saturated monoids which induces an isomorphism $Q^{gp}\cong P^{gp}$. The resulting morphism $f\colon\text{Spec}(P\rightarrow R\otimes_{\mathbb{Z}[Q]}\mathbb{Z}[P])\rightarrow \text{Spec}(Q\rightarrow R)$ can be factored into the composition of an open immersion and a log blow-up.
\end{lemma}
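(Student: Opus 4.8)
The plan is to reduce to the universal situation $R=\mathbb{Z}[Q]$ and then translate everything into the combinatorics of rational cones. Since $Q$ is integral the map $Q\to P$ is injective, so I regard $Q\subseteq P$ as submonoids of $M\coloneqq Q^{\mathrm{gp}}=P^{\mathrm{gp}}$. First I would observe that $f$ is the base change of $\mathbb{A}_P\to\mathbb{A}_Q$ along $\mathrm{Spec}(Q\to R)\to\mathbb{A}_Q$: forming the fibre product of fs log schemes from the charts $P$, $Q$, $Q$ yields the (already saturated) chart $P\oplus_Q Q=P$ and the underlying ring $\mathbb{Z}[P]\otimes_{\mathbb{Z}[Q]}R$, which is exactly the source of $f$. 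As strict open immersions and log blow-ups are both stable under base change (for the latter see \cite[III.2.6]{lecturesonlogarithmicalgebraicgeometry}), it suffices to factor $\mathbb{A}_P\to\mathbb{A}_Q$.

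Next I would set up the toric dictionary. Put $N_{\mathbb{R}}=\mathrm{Hom}(M,\mathbb{R})$ and let $\sigma_Q=\{v:\langle v,q\rangle\ge 0\ \forall q\in Q\}$ and $\sigma_P=\{v:\langle v,p\rangle\ge 0\ \forall p\in P\}$ be the dual cones. Because $Q^{\mathrm{gp}}=M$, both cones are strongly convex and rational, one has $P=\sigma_P^\vee\cap M$ and $Q=\sigma_Q^\vee\cap M$ by saturation, and $Q\subseteq P$ forces $\sigma_P\subseteq\sigma_Q$. Under the standard correspondence, $\mathbb{A}_P\to\mathbb{A}_Q$ is the toric morphism attached to the inclusion of cones $\sigma_P\hookrightarrow\sigma_Q$.

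The core of the argument is to realise $\sigma_P$ as a cone of a subdivision of $\sigma_Q$ that comes from a log blow-up. I would do this with an explicit central hyperplane arrangement: choose $h_1,\dots,h_N\in M$ consisting of the primitive inner facet normals of $\sigma_P$ together with $\pm n_1,\dots,\pm n_r$ for a generating set $n_1,\dots,n_r$ of $(\mathrm{span}\,\sigma_P)^\perp\cap M$, with signs so that $\langle\cdot,h_l\rangle\ge 0$ on $\sigma_P$. A short convexity check gives the identity $\sigma_P=\sigma_Q\cap\bigcap_l\{\langle\cdot,h_l\rangle\ge 0\}$, exhibiting $\sigma_P$ as the intersection of $\sigma_Q$ with a single cone of the arrangement fan, i.e.\ literally as a cone of the common refinement $\Sigma$ of $\sigma_Q$ with $\{\ker h_l\}$. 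To recognise $\Sigma$ as a log blow-up, write $h_l=a_l-b_l$ with $a_l,b_l\in Q$ (possible since $M=Q^{\mathrm{gp}}$) and set $K=\prod_l(a_l,b_l)\subseteq Q$; the subdivision attached to the log blow-up of $\mathbb{A}_Q$ along $K$ is the domain-of-linearity decomposition of $v\mapsto\sum_l\min(\langle v,a_l\rangle,\langle v,b_l\rangle)$, which is exactly $\Sigma$.

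Finally, since $\sigma_P\in\Sigma$ the torus-invariant open $U_{\sigma_P}\subseteq\mathrm{Bl}_K\mathbb{A}_Q$ is a strict open subscheme carrying the toric log structure $P\to\mathbb{Z}[P]$; that is, $U_{\sigma_P}=\mathbb{A}_P$, and the composite $\mathbb{A}_P=U_{\sigma_P}\hookrightarrow\mathrm{Bl}_K\mathbb{A}_Q\to\mathbb{A}_Q$ is again the toric morphism of $\sigma_P\hookrightarrow\sigma_Q$, hence equals our map. Base-changing this factorisation along $\mathrm{Spec}(Q\to R)$ produces the desired factorisation of $f$ as a strict open immersion followed by a log blow-up. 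I expect the main obstacle to be the fan-theoretic bookkeeping: verifying that the product ideal $K$ produces precisely $\Sigma$ and that $\sigma_P$ survives as one of its cones, and in particular handling the case where $\sigma_P$ is not full-dimensional (equivalently where $P$ has units). The convexity identity $\sigma_P=\sigma_Q\cap\bigcap_l\{\langle\cdot,h_l\rangle\ge 0\}$ is what makes this step go through uniformly.
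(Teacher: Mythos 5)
Your proof is correct, but it takes a genuinely different route from the paper's. The paper argues directly and algebraically, without any toric dictionary and without reducing to the universal case: since $Q^{\mathrm{gp}}=P^{\mathrm{gp}}$ and $Q$ is integral, one can write generators of $P$ over a common denominator, $P=\langle f_1/s,\dots,f_n/s\rangle$ with $f_i,s\in Q$, set $I=(f_1,\dots,f_n,s)\subseteq Q$, and observe that $P=\bigcup_{n\geq 0}I^n/s^n$; the source of $f$ is then literally the chart $D_+(s)$ of $\mathrm{Bl}_I(\mathrm{Spec}(Q\rightarrow R))$, so the factorization drops out of the chart construction of the log blow-up in two lines. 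You instead base change from $\mathbb{A}_P\rightarrow\mathbb{A}_Q$ (this reduction is valid, and implicit in the paper's argument as well) and then dualize, presenting $\sigma_P$ by half-spaces $\{\langle\cdot,h_l\rangle\geq 0\}$, writing $h_l=a_l-b_l$ with $a_l,b_l\in Q$, and blowing up $K=\prod_l(a_l,b_l)$. This works: the fan of $\mathrm{Bl}_K\mathbb{A}_Q$ is the domain-of-linearity subdivision you describe (concavity of each summand rules out accidental linearity across walls), and $\sigma_P=\sigma_Q\cap\bigcap_l\{\langle\cdot,h_l\rangle\geq 0\}$ is indeed one of its cones. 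In fact the fan bookkeeping you flag as the main obstacle can be short-circuited: $\sigma_P$ is exactly the cone of the chart $D_+\bigl(\prod_l b_l\bigr)$, since $\bigcup_n K^n/\bigl(\prod_l b_l\bigr)^n$ is the submonoid $Q[h_1,\dots,h_N]\subseteq M$, whose saturation is $\sigma_P^\vee\cap M=P$. Seen this way the two proofs are structurally parallel --- both exhibit the source as a principal chart of a single explicit blow-up --- with the difference that you encode $P$ by the facet inequalities of $\sigma_P$ while the paper encodes $P$ by monoid generators. The paper's version is shorter and self-contained, needing only the chart construction of log blow-ups and no separate treatment of degenerate cases (units in $P$, non-full-dimensional cones); what your approach buys is the geometric picture, namely that any such $\sigma_P$ arises as a chamber of a central hyperplane-arrangement refinement of $\sigma_Q$, which is the standard toric mechanism behind the lemma.
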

\begin{proof}
Since $Q\rightarrow P$ induces an isomorphism $Q^{gp}\cong P^{gp}$ and $Q$ is integral, we know that $Q\subset P$. Since $P$ is finitely generated and $P\subset Q^{gp}$, we may choose generators $\frac{f_1}{s},...,\frac{f_n}{s}$ of $P$, where $f_i,s\in Q$. Let $I$ be the ideal generated by $f_1,f_2,...,f_n, s$. As a result, we get $P=\bigcup_{n\geq 0} \frac{I^{n}}{s^n}$ and by the construction of the log blow-up (see \cite[Section 2.1]{NAKAYAMALOGETALECohomology2} for details), the morphism $f$ factors as desired:
\[\text{Spec}(P\rightarrow R\otimes_{\mathbb{Z}[Q]}\mathbb{Z}[P])=D_{+}(s)\hookrightarrow\text{Bl}_I(\text{Spec}(Q\rightarrow R))\rightarrow\text{Spec}(Q\rightarrow R).\]
\end{proof}
\begin{remark}\label{m-open local composition}
    Using Lemma \ref{Lemma for special type of morphisms of monoids}, one can rephrase Proposition \ref{chart for m-open} in the following way: If $Y$ is Zariski, then $f$ is m-open if and only if Zariski locally in $X$ and $Y$, $f$ can be written as the composition of a strict open immersion followed by a log blow-up. In particular, this implies that the morphism $f^{\text{val}}\colon X^{\text{val}}\rightarrow Y^{\text{val}}$ of Remark \ref{you can always blow up} is a local isomorphism if $f$ is m-open - even if $Y$ is not Zariski.
\end{remark}
Definitions of log modifications usually require properness as an extra assumption. We show that this is automatic from our definition.
\begin{proposition}\label{prop: log mods are proper}
Any abstract log modification is proper.
\end{proposition}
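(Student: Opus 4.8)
The plan is to show that the underlying morphism $\underline f$ is proper by identifying it, strict \'etale locally on the target, with a log blow-up, and then invoking that properness is local on the target in the strict \'etale topology. Since log modifications are stable under base change (stated in the introduction) and properness descends along strict \'etale covers of the target, I may replace $Y$ by a strict \'etale neighbourhood and so assume that $Y$ carries a global chart $Q\to\Gamma(Y,\mathcal M_Y)$ with $Q=\overline{\mathcal M}_{Y,\overline y}$. Before the main reduction I record the two cheap ingredients: as $f$ is a monomorphism, its diagonal $\Delta_f\colon X\to X\times_Y X$ is an isomorphism and hence a closed immersion, so $\underline f$ is separated; and as $f$ is log \'etale it is locally of finite presentation, so $\underline f$ is locally of finite type. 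It therefore remains to upgrade the local picture to properness.

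The next step is the local structure of $f$. By Kato's chart criterion for log \'etale morphisms \cite{logpaperbyKazuyaKato}, strict \'etale locally on $X$ there is a chart $\theta\colon Q\to P$ of $f$ with $\ker\theta^{\mathrm{gp}}$ and $\operatorname{coker}\theta^{\mathrm{gp}}$ finite of order invertible on $X$ and with $X\to Y\times_{\mathbb A_Q}\mathbb A_P$ strict \'etale. The decisive point is that the monomorphism hypothesis forbids a nontrivial Kummer part: a nonzero $\operatorname{coker}\theta^{\mathrm{gp}}$ would make $\mathbb A_P\to\mathbb A_Q$, and hence the corresponding part of $f$, finite of degree $>1$ over the dense torus, whereas a monomorphism has generic degree at most one. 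Thus $\theta^{\mathrm{gp}}$ is an isomorphism, i.e.\ $Q^{\mathrm{gp}}\cong P^{\mathrm{gp}}$, and Lemma \ref{Lemma for special type of morphisms of monoids} factors $Y\times_{\mathbb A_Q}\mathbb A_P\to Y$ as a strict open immersion followed by a log blow-up. Moreover, since $X\to Y\times_{\mathbb A_Q}\mathbb A_P$ is strict \'etale and its composite to $Y$ is a monomorphism, left-cancellation forces it to be a monomorphism as well, hence an open immersion. Consequently, strict \'etale locally on $X$ the morphism $f$ is the restriction, to an open subscheme, of a log blow-up $\pi\colon\widetilde Y\to Y$; assembling these local blow-ups on the target side realizes $X$ as an open subspace of a log modification $\pi$ which, \'etale locally on $Y$, is pulled back from a fan subdivision of $\mathrm{Hom}(Q,\mathbb R_{\ge 0})$, with $\underline\pi$ proper \cite[Theorem III.2.6.3]{lecturesonlogarithmicalgebraicgeometry}.

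I then write $f=\pi\circ j$ with $j\colon X\hookrightarrow\widetilde Y$ an open immersion and $\underline\pi$ proper, and it suffices to prove that $j$ is surjective, since a surjective open immersion is an isomorphism and properness is stable under base change and local on the target. This is exactly where the strong, integral notion of universal surjectivity from Definition \ref{defi: log mod} enters: if $j$ omitted a point of $\widetilde Y$, one could choose an \emph{integral} log scheme $T\to Y$ whose image meets the corresponding stratum (for instance the generic point of a ray of the fan left uncovered), and then $X\times_Y T\to T$ would fail to be surjective, contradicting the definition. Hence $j$ is surjective, so $f$ is isomorphic to $\pi$ \'etale locally on $Y$, and $\underline f$ is proper.

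I expect two main obstacles. The first is promoting the chartwise blow-up descriptions to a single modification over $Y$: because universal closedness is \emph{not} local on the source, the local toric pictures must be glued on the target, through the associated fan, rather than by patching source charts, which is the content of the structure theorem identifying log modifications \'etale locally on the base with pullbacks of fan subdivisions. The second, and the genuinely essential one, is translating ``universally surjective in the integral sense'' into completeness of that subdivision, equivalently surjectivity of $j$: ordinary universal surjectivity tested only on fs log schemes would not detect the omitted cones, so the argument must exploit base changes along non-saturated integral log schemes, precisely as emphasized in Subsection \ref{subsection in introduction about universal surjectivity}.
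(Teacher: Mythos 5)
Your overall strategy (factor $f$ strict \'etale locally as open immersions into log blow-ups via a chart argument, assemble these into a single blow-up $\pi\colon\widetilde Y\to Y$, then use universal surjectivity to force $X\to\widetilde Y$ to be an isomorphism) is the same as the paper's, but the decisive step is exactly the one you wave through: \emph{assembling} the local pictures into a single log blow-up. Proposition \ref{chart for m-open} and Lemma \ref{Lemma for special type of morphisms of monoids} give you, for each point of $X$, an open $U_i\subset X$ sitting as an open inside a blow-up $\mathrm{Bl}_{I_i}Y$ --- but a priori there are \emph{infinitely many} ideals $I_i$, and there is no log blow-up dominating infinitely many blow-ups (one cannot form the product of infinitely many coherent ideals; on the fan side, the union of infinitely many local subdivisions is an infinite subdivision, whose associated modification is not quasi-compact and hence not proper, so citing \cite[Theorem III.2.6.3]{lecturesonlogarithmicalgebraicgeometry} is not available). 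The paper closes this hole with Proposition \ref{prop: quasi-compact is quasi-compact}: since $f$ is universally surjective in the strong (integral/valuative) sense, the $U_i\to Y$ form an m-open cover, $Y$ is a quasi-compact object of the m-open site, and one extracts a \emph{finite} subcover $J$; only then does the single blow-up $\widetilde Y=\mathrm{Bl}_{\prod_{j\in J}I_j}Y$ exist. This finiteness step cannot be skipped: as the paper's remark after the proposition notes, Proposition \ref{prop: can cover by valuatives} produces log \'etale monomorphisms that are universally surjective in the weak (fs) sense and are \emph{not} proper --- precisely infinite unions of opens of different blow-ups, which your argument would not exclude.

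Relatedly, you misplace where the strong notion of universal surjectivity is essential. Once a single blow-up $\widetilde Y$ with an open immersion $j\colon X\hookrightarrow\widetilde Y$ over $Y$ is in hand, surjectivity of $j$ already follows from the \emph{weak} fs notion: for a point $\tilde y\in\widetilde Y$ with its induced fs log structure, the fact that $\pi$ is a monomorphism gives $X\times_Y\tilde y\cong X\times_{\widetilde Y}\tilde y=j^{-1}(\tilde y)$, so no non-saturated test objects are needed at that stage. The integral/valuative test objects are needed one step earlier, to prove quasi-compactness of $Y$ in the m-open topology (via surjectivity of $f^{\mathrm{val}}$, Lemma \ref{lemma: lemma on valuative morphisms}) and hence the finite subcover. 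Two smaller points: your derivation of the chart with $Q^{\mathrm{gp}}\cong P^{\mathrm{gp}}$ from Kato's criterion plus a ``generic degree'' argument is heuristic (the paper instead proves this via Proposition \ref{chart for m-open}, which rests on Proposition \ref{prop: mono Zariski} and a cited chart lemma for log \'etale monomorphisms); and your separatedness argument conflates the diagonal in fs log schemes with the diagonal of underlying schemes --- the fs fiber product's underlying scheme differs from the scheme-theoretic fiber product, so ``monomorphism $\Rightarrow$ the scheme diagonal is an isomorphism'' needs an extra argument (the paper avoids this by deducing separatedness and universal closedness of $X\to Y$ by descent along the surjective universally closed morphism $\widetilde X\to X$).
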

\begin{proof}
Since both notions are strict \'etale local in the target (for properness, see \cite[02L1]{stacks-project}), it suffices to consider the case where the target $Y$ of the abstract log modification $f\colon X\rightarrow Y$ is affine and admits a global chart $P\rightarrow \Gamma(Y,\mathcal{M}_Y)$. By Proposition \ref{chart for m-open}, we can find a Zariski cover $\{U_i\rightarrow X\}_{i\in I}$ so that each $U_i\rightarrow Y$ factors, by Lemma \ref{Lemma for special type of morphisms of monoids}, as a composition $U_i\rightarrow \widetilde{Y}_i \rightarrow Y$ of an open immersion and the log blow-up at an ideal $I_i\subset P$. Since $f$ is universally surjective, the $U_i\rightarrow Y$ form an m-open cover of $Y$. By Proposition \ref{prop: quasi-compact is quasi-compact} and quasi-compactness of $Y$, there is a finite subset $J\subset I$ so that $\{U_i\rightarrow Y\}_{i\in J}$ is again a cover.

Now let $\widetilde{Y}\rightarrow Y$ be the log blow-up at $I=\prod_{j\in J} I_j$ so that $\widetilde{Y}\rightarrow Y$ factors through $\widetilde{Y}_i\rightarrow Y$ for all $i\in J$. Since log blow-ups are monomorphisms, it follows that each base change $\widetilde{U}_i\coloneqq U_i\times_Y \widetilde{Y}\rightarrow\widetilde{Y}$ is an open immersion. Hence, $\widetilde{U}\coloneqq \bigcup_{j\in J} \widetilde{U}_j\subset \widetilde{X}\coloneqq X\times_Y\widetilde{Y}$ is an open subset such that $\widetilde{U}\rightarrow\widetilde{X}\rightarrow\widetilde{Y}$ is a surjective local isomorphism that is also a monomorphism and hence an isomorphism. As a result, $\widetilde{X}\rightarrow \widetilde{Y}$ is a monomorphism with a section, which forces it to be an isomorphism as well. Thus, the composite $\widetilde{X}\rightarrow X\rightarrow Y$ is a log blow-up, which by \cite[09MQ (4)]{stacks-project} means that $X\rightarrow Y$ is separated and by \cite[03GN]{stacks-project} universally closed. Since $X\rightarrow Y$ is log \'etale, it is locally of finite type. Lastly, $X$ is also quasi-compact since it is dominated by $\widetilde{X}$, which is quasi-compact as it is a log blow-up of $Y$ and $Y$ was quasi-compact. Therefore, $X\rightarrow Y$ is proper as desired.
\end{proof}
\begin{remark}
\begin{enumerate}
    \item Since we appealed to Proposition \ref{prop: quasi-compact is quasi-compact}, this proof does not work unless one uses our notion of universal surjectivity. Indeed, one can use the covering in Proposition \ref{proposition: the counterexample} to construct examples of (weakly) universally surjective log \'etale monomorphisms that are not proper.
    \item The above proof shows that any abstract log modification in our sense is also a log modification in Kato's sense \cite{kato2021exactnessintegralitylogmodifications}.
\end{enumerate}
\end{remark}
\subsection{Basic properties}
Here we collect a few of the basic properties of m-type morphisms.
\begin{proposition}\label{basic properties of m-type morphisms}
\begin{enumerate}
\item\label{m-open is m-etale} Every m-open morphism is m-\'etale.
\item\label{m-is-log} Any m-\'etale (resp. m-smooth, m-flat) morphism is also log \'etale (resp. log smooth, log flat).
\item\label{strict-is-m} Local isomorphisms (resp. strict \'etale, smooth, flat morphisms) are m-open (resp. m-\'etale, m-smooth, m-flat).
\item\label{m-is-s-u-com} The conditions of being m-open, m-\'etale, m-smooth and m-flat are stable under composition and base change.
\item\label{locality} A morphism $f\colon X\rightarrow Y$ is m-\'etale (resp. m-smooth, m-flat) if and only if it can be written, strict \'etale locally in $X$ and $Y$, as a composition of a strict \'etale (resp. strict smooth, strict flat) morphism followed by a log blow-up. In particular, being m-\'etale (resp. m-smooth, m-flat) is strict \'etale local in source and target.
\end{enumerate}    
\end{proposition}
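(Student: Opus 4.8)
The plan is to prove part \eqref{locality} first, since it provides the geometric reformulation from which the remaining statements follow most cleanly, and then to deduce \eqref{m-open is m-etale}--\eqref{m-is-s-u-com} from it. For \eqref{locality}, I would simply unwind Definition \ref{Definition of m-type morphism} using Lemma \ref{Lemma for special type of morphisms of monoids}. In the forward direction, the condition $Q^{\mathrm{gp}}\cong P^{\mathrm{gp}}$ lets me factor $V\times_{\mathbb{A}_Q}\mathbb{A}_P\to V$ as a strict open immersion followed by a log blow-up; composing the strict étale map $U\to V\times_{\mathbb{A}_Q}\mathbb{A}_P$ with the strict open immersion (both strict étale) exhibits $U\to V$ as a strict étale morphism followed by a log blow-up, while $U\to X$ and $V\to Y$ are strict étale by construction. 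Conversely, given a strict étale-local factorization $X\xrightarrow{a}W\xrightarrow{\pi}Y$ with $a$ strict étale and $\pi$ a log blow-up, I would pull $\pi$ back to a chosen chart neighborhood $V$ of the target and use the explicit chart description of log blow-ups (as in \cite[Section 2.1]{NAKAYAMALOGETALECohomology2}): locally on the source a log blow-up is of the form $V\times_{\mathbb{A}_Q}\mathbb{A}_P$ with $Q^{\mathrm{gp}}\cong P^{\mathrm{gp}}$. Choosing $U\to X$ strict étale landing in such a chart piece then verifies Definition \ref{Definition of m-type morphism}. The ``in particular'' clause is immediate, as this characterization is manifestly strict étale local in source and target.

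Granting \eqref{locality}, parts \eqref{m-is-log} and \eqref{strict-is-m} are short. For \eqref{m-is-log}, strict étale (resp. smooth, flat) morphisms are log étale (resp. log smooth, log flat), log blow-ups are log étale and hence also log smooth and log flat, and all three classes are stable under composition and local in source and target; so the local factorization of \eqref{locality} is log étale (resp. log smooth, log flat). For \eqref{strict-is-m}, a strict étale (resp. smooth, flat) morphism is trivially its own factorization with a trivial log blow-up, hence m-étale (resp. m-smooth, m-flat); and a local isomorphism is covered on the source by strict open immersions, which are log étale monomorphisms, so it is m-open.

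For \eqref{m-is-s-u-com}, base change stability follows from \eqref{locality} because strict étale (resp. smooth, flat) morphisms and log blow-ups are each stable under base change and the factorization is strict étale local; the m-open case is even more direct, as the base change of a log étale monomorphism is again one and open immersions are stable under base change. Stability under composition I would check at the level of charts in Definition \ref{Definition of m-type morphism}. Given $f\colon X\to Y$ and $g\colon Y\to Z$ m-étale and $x\in X$, I first apply the m-étaleness of $g$ at $f(x)$ to obtain a strict étale neighborhood $V_Y\to Y$ with chart $\psi\colon S\to Q$ and $V_Y\to V_Z\times_{\mathbb{A}_S}\mathbb{A}_Q$ strict étale, then apply the m-étaleness of $f$ over $V_Y$ to obtain $U\to X$ with chart $\theta\colon Q\to P$ and $U\to V_Y\times_{\mathbb{A}_Q}\mathbb{A}_P$ strict étale. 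The key computation is the identity of fs fiber products
\[
(V_Z\times_{\mathbb{A}_S}\mathbb{A}_Q)\times_{\mathbb{A}_Q}\mathbb{A}_P = V_Z\times_{\mathbb{A}_S}\mathbb{A}_P,
\]
which lets me base change $V_Y\to V_Z\times_{\mathbb{A}_S}\mathbb{A}_Q$ along $\mathbb{A}_P\to\mathbb{A}_Q$ and compose to see that $U\to V_Z\times_{\mathbb{A}_S}\mathbb{A}_P$ is strict étale; since $\theta\circ\psi$ induces $S^{\mathrm{gp}}\cong P^{\mathrm{gp}}$, this verifies m-étaleness of $g\circ f$. The same argument works verbatim for m-smooth and m-flat, using that strict smooth and strict flat morphisms are stable under base change and composition, and the m-open case I would handle directly using that compositions of log étale monomorphisms are again log étale monomorphisms.

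Finally, \eqref{m-open is m-etale} follows by comparing chart criteria. Since being m-étale is strict étale local in the target by \eqref{locality}, and since every fs log scheme admits a strict étale cover by log schemes with global charts (which in particular have Zariski log structure), I may use \eqref{m-is-s-u-com} to reduce to the case where $Y$ is Zariski. Then Proposition \ref{chart for m-open} provides, for an m-open $f$, exactly the chart data of Definition \ref{Definition of m-type morphism} with the map $U\to V\times_{\mathbb{A}_Q}\mathbb{A}_P$ being a strict open immersion; as strict open immersions are strict étale, $f$ is m-étale. I expect the main obstacle to be the converse direction of \eqref{locality}: reconciling the explicit chart structure of log blow-ups with the fs (saturated) fiber products so that the local pieces $V\times_{\mathbb{A}_Q}\mathbb{A}_P$ genuinely match the source of the given log blow-up, while keeping all chart manipulations within fs monoids satisfying $Q^{\mathrm{gp}}\cong P^{\mathrm{gp}}$.
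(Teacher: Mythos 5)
Your proposal is correct and takes essentially the same route as the paper: both rest on Lemma \ref{Lemma for special type of morphisms of monoids} to translate between the chart criterion and the strict-\'etale-followed-by-log-blow-up factorization in \eqref{locality}, both handle the converse of \eqref{locality} by base-changing the blow-up/chart data to an arbitrary chart neighborhood of the target (i.e.\ strict \'etale locality in the target), and both deduce \eqref{m-open is m-etale} by reducing to a Zariski target. The only differences are organizational: you prove \eqref{locality} first and spell out composition stability via the fiber-product identity, whereas the paper records \eqref{m-is-log}, \eqref{strict-is-m}, and composition as immediate from the definitions and proves \eqref{locality} last.
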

\begin{proof}
Properties \eqref{m-is-log} and \eqref{strict-is-m} and stability under composition in \eqref{m-is-s-u-com} follow directly from the definitions. Stability under base change is immediate for m-open morphisms and follows from \eqref{locality} for m-\'etale, m-smooth and m-flat morphisms. It furthermore follows from \eqref{locality} that any m-open morphism is m-\'etale since one can replace the target by an \'etale cover with a Zariski log structure and then apply Remark \ref{m-open local composition}. Hence, it only remains to show \eqref{locality}. 

For that, we first note that any m-\'etale (resp. m-smooth, m-flat) morphism is of the described shape since one can cover $Y$ by charts $U\rightarrow Y$ for which we can find a compatible chart $V\rightarrow X$ so that $U\rightarrow V$ factors by Lemma \ref{Lemma for special type of morphisms of monoids} in the desired way. For the converse, first note that log blow-ups are m-\'etale as can be seen from their explicit construction. Together with \eqref{strict-is-m}, this shows that any composition as in \eqref{locality} is m-\'etale (resp. m-smooth, m-flat). It therefore suffices to show that the properties of being m-\'etale (resp. m-smooth, m-flat) are \'etale local properties. 

Indeed, \'etale locality in the source follows directly from the definition. For \'etale locality in the target we let $Y'\twoheadrightarrow Y$ be an \'etale cover so that $ X' = X\times_Y Y'\rightarrow Y'$ is m-\'etale (resp. m-smooth, m-flat). Now let $x\in X$, and let $V\rightarrow Y$ be an \'etale neighborhood of $f(x)$ with a chart $Q\rightarrow \Gamma(V,\mathcal{M}_Y)$ and let $x'\in X'$ be some preimage of $x$. By assumption, there is an \'etale neighborhood $U\rightarrow X'$ of $x'$ with a chart $P\rightarrow \Gamma(U,\mathcal{M}_{X'})$ so that $U\rightarrow X'\rightarrow Y'$ factors through $V' = V\times_Y Y'\rightarrow Y'$. We further get that there is a morphism of charts $Q\rightarrow P$ so that $Q^{gp}=P^{gp}$ and the resulting $U\rightarrow V'' \coloneqq V'\times_{\mathbb{A}_Q} \mathbb{A}_P$ is strict \'etale (resp. smooth, flat). However, $V''\rightarrow V''' \coloneqq V\times_{\mathbb{A}_Q} \mathbb{A}_P$ is strict \'etale as it is a base change of $Y'\rightarrow Y$ and so the resulting composite $U\rightarrow V''\rightarrow V'''$ is again strict \'etale (resp. smooth, flat) and hence the chart $U \rightarrow X' \rightarrow X$ is the desired one.
\end{proof}
\begin{example}
    The following example illustrates why m-open morphisms are different than the other m-type morphisms. Let $X=\text{Spec}(\mathbb{N}^2\xrightarrow{0}\mathbb{C})$ and $Y = \text{Spec}(\mathbb{R})$ equipped with the log structure descended from $X$ by imposing that complex conjugation swaps the two factors of $\mathbb{N}^2$. Hence all morphisms in the cartesian square
    \begin{center}
    \begin{tikzcd}
        X\sqcup X = X\times_Y X \rar\dar &X\dar\\
        X\rar & Y
    \end{tikzcd}
    \end{center}
    are strict \'etale covers and the upper horizontal morphism is a local isomorphism and thus m-open. However, $X\rightarrow Y$ is not a monomorphism and thus not m-open. 
    
    Hence being m-open is not \'etale local in the target (unlike the other m-type morphisms). Moreover, this demonstrates why we did not define m-open morphisms via a chart criterion, because the natural analog of Definition \ref{Definition of m-type morphism} also holds for $X\rightarrow Y$. Hence, this does not capture the notion of being a combination of abstract log modifications and open immersions.
\end{example}
We now show that m-type morphisms lie at the opposite extreme from Kummer type morphisms.
\begin{proposition}\label{m-type of Kummer is strict}
Let $f\colon X\rightarrow Y$ be an m-open (resp. m-\'etale, m-smooth, m-flat) morphism. If $f$ is also exact, then $f$ is a strict local isomorphism (resp. strict \'etale, strict smooth, strict flat). In particular, an m-open (resp. m-\'etale, m-smooth, m-flat) morphism of Kummer type is a strict local isomorphism (resp. strict \'etale, strict smooth, strict flat).
\end{proposition}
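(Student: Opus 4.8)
The plan is to reduce the whole statement to a single face computation on the chart monoids. First I record that the final ``in particular'' follows from the main assertion, since a Kummer morphism is automatically exact: if $\theta\colon Q\to P$ is a Kummer homomorphism of fs monoids and $q\in Q^{gp}$ satisfies $\theta^{gp}(q)\in P$, then there is some $n\geq 1$ with $n\,\theta^{gp}(q)=\theta(q_0)$ for a $q_0\in Q$; injectivity of $\theta^{gp}$ gives $nq=q_0\in Q$, and saturatedness of $Q$ forces $q\in Q$, so $Q=(\theta^{gp})^{-1}(P)$, i.e. $\theta$ is exact. Hence it suffices to treat the case where $f$ is exact. I then reduce to proving that $f$ is \emph{strict}: strictness can be checked on characteristic stalks at every geometric point, and once known it combines with Proposition \ref{basic properties of m-type morphisms}\eqref{m-is-log} and the standard fact that a strict log \'etale (resp. log smooth, log flat) morphism is \'etale (resp. smooth, flat) \cite{lecturesonlogarithmicalgebraicgeometry} to give the m-\'etale (resp. m-smooth, m-flat) conclusion. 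For the m-open case one notes that $f$ is m-\'etale by Proposition \ref{basic properties of m-type morphisms}\eqref{m-open is m-etale}, hence already strict \'etale; covering $X$ by opens $U$ with $U\to Y$ a log \'etale monomorphism, each such $U\to Y$ becomes a strict \'etale monomorphism, i.e. a strict open immersion, so $f$ is a strict local isomorphism.

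To prove strictness, I would fix a geometric point $x\in X$. By Definition \ref{Definition of m-type morphism}, after passing to strict \'etale neighbourhoods (which affects neither exactness nor the characteristic stalk at $x$), the morphism $f$ factors as $U\to V\times_{\mathbb{A}_Q}\mathbb{A}_P\to V\to Y$ with the outer two maps strict and $\theta\colon Q\to P$ inducing an isomorphism $Q^{gp}\cong P^{gp}$. The two strict maps induce isomorphisms on characteristic monoids, so the only non-strict piece is the projection $V\times_{\mathbb{A}_Q}\mathbb{A}_P\to V$, and the characteristic map of $f$ at $x$ is exactly the localization $\bar\theta\colon Q/F\to P/G$ of $\theta$, where $G\subset P$ is the face of sections invertible at $x$ and $F=\theta^{-1}(G)$. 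Since $Q$ is integral and $\theta^{gp}$ is an isomorphism, I identify $Q\subset P\subset L\coloneqq Q^{gp}=P^{gp}$.

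The crux is the claim that exactness of $\bar\theta$ forces it to be an isomorphism. Unwinding the definition of exactness and pulling back along the surjection $L\twoheadrightarrow L/F^{gp}$, the condition $Q/F=(\bar\theta^{gp})^{-1}(P/G)$ inside $L/F^{gp}$ translates into the equality of localized submonoids $Q_F=P_G$ inside $L$ (the preimage of $Q/F$ is $Q_F=Q+F^{gp}$ and the preimage of $(\bar\theta^{gp})^{-1}(P/G)$ is $P_G=P+G^{gp}$). As $Q\subset P$ already gives $Q_F\subset P_G$, exactness yields $Q_F=P_G$. Comparing groups of units then gives $F^{gp}=(Q_F)^\times=(P_G)^\times=G^{gp}$, and therefore $\overline{\mathcal{M}}_{Y,f(x)}=Q/F=Q_F/F^{gp}=P_G/G^{gp}=P/G=\overline{\mathcal{M}}_{X,x}$ with $\bar\theta$ the identity. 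Applying this at every geometric point of $X$ shows that $f^{-1}\overline{\mathcal{M}}_Y\to\overline{\mathcal{M}}_X$ is an isomorphism, i.e. $f$ is strict, which together with the reductions above finishes the proof.

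I expect the main obstacle to be the bookkeeping in this last step: carefully matching the abstract notion of exactness of $f$ with the chart data, correctly tracking the faces $F=\theta^{-1}(G)$ and $G$, and verifying that the characteristic map of $f$ at $x$ really is the localized $\bar\theta$ (so that the fs fibre product contributes no extra saturation). Once the translation ``exact $\iff Q_F=P_G$'' is in place, the comparison of unit groups makes the conclusion immediate.
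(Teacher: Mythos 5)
Your proof is correct, and it follows the paper's overall skeleton — reduce the exact case to strictness, then conclude using Proposition \ref{basic properties of m-type morphisms}\eqref{m-is-log} and the fact that strict log \'etale (resp.\ log smooth, log flat) morphisms are strict \'etale (resp.\ smooth, flat) — but you implement the two key steps by genuinely different means. For strictness, the paper uses the factorization of Proposition \ref{basic properties of m-type morphisms}\eqref{locality} (strict morphism followed by a log blow-up $h$), deduces that $h$ is exact, and then combines injectivity of exact morphisms of sharp monoids \cite[Proposition I.4.2.1(5)]{lecturesonlogarithmicalgebraicgeometry} with gp-surjectivity of log blow-ups; you instead unwind Definition \ref{Definition of m-type morphism} to the chart level and prove the elementary monoid statement that an exact map $Q/F\to P/G$ with $Q^{gp}=P^{gp}$ is an isomorphism, via the equivalence ``exact $\iff Q_F=P_G$'' and the unit computations $(Q_F)^\times=F^{gp}$, $(P_G)^\times=G^{gp}$. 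Your bookkeeping is sound: since the chart morphism $V\to\mathbb{A}_Q$ is strict, the fs fibre product $V\times_{\mathbb{A}_Q}\mathbb{A}_P$ is the scheme fibre product with chart $P$, the stalk maps of structure sheaves are local, so $F=\theta^{-1}(G)$ is indeed the face cut out at $f(x)$, and the characteristic map of $f$ at $x$ is the localized $\bar\theta$. This buys self-containedness where the paper cites Ogus. For the m-open case the paper cites \cite[Proposition 2.8]{park2024inverting} (exact log \'etale monomorphisms are strict open), whereas you bootstrap from the m-\'etale case via Proposition \ref{basic properties of m-type morphisms}\eqref{m-open is m-etale}; this works, but you should justify the step ``strict \'etale monomorphism $=$ strict open immersion'' by noting that a \emph{strict} monomorphism of fs log schemes has monomorphic underlying scheme morphism (strictness ensures the fs fibre product $U\times_Y U$ has underlying scheme $\underline{U}\times_{\underline{Y}}\underline{U}$, so the scheme diagonal is an isomorphism), after which \'etale plus monomorphism gives an open immersion. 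Finally, your direct verification that Kummer homomorphisms are exact replaces the paper's citation of \cite[Chapter I, Corollary 4.3.10]{lecturesonlogarithmicalgebraicgeometry}.
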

\begin{proof} 
For m-open morphisms, this follows from the fact that an exact log \'etale monomorphism has to be strict open (see \cite[Proposition 2.8]{park2024inverting}). 

For the other cases, we recall from Proposition \ref{basic properties of m-type morphisms}\eqref{m-is-log} that a strict m-\'etale (resp. m-smooth, m-flat) morphism is also a strict log \'etale (resp. log smooth, log flat) and by \cite[Proposition IV.3.1.6]{lecturesonlogarithmicalgebraicgeometry} and \cite[Proposition IV.4.1.2(2)]{lecturesonlogarithmicalgebraicgeometry}, this forces it to be strict \'etale (resp. smooth, flat). 

Now assume that $f$ is exact. By the previous discussion, it suffices to show that $f$ is strict; that is, for any $x\in X$, we have $\overline{\mathcal{M}}_{X,\overline{x}} = \overline{\mathcal{M}}_{Y,\overline{f(x)}}$. By Proposition \ref{basic properties of m-type morphisms}\eqref{locality}, we have the following diagram:
\begin{equation}
\begin{tikzcd}
U \arrow[r] \arrow[d, "f'"] & X \arrow[d, "f"] \\
V \arrow[r]  & Y.              
\end{tikzcd}    
\end{equation}
Here, $U$ and $V$ are \'etale neighborhoods of $x$ and $f(x)$ and the horizontal morphisms are strict \'etale. Moreover, $U\rightarrow V$ factors as $U\xrightarrow{g}\widetilde{V}\xrightarrow{h}V$, where $g$ is a strict \'etale (resp. smooth, flat) morphism and $h$ is a log blow-up. Since $U\rightarrow V$ is exact at $x$ and $g$ is strict, it follows that $h$ is exact at $g(x)$. By \cite[Proposition I.4.2.1(5)]{lecturesonlogarithmicalgebraicgeometry}, it follows that $\overline{\mathcal{M}}_{V,\overline{g(x)}}\rightarrow \overline{\mathcal{M}}_{\widetilde{V},\overline{f(x)}}$ is injective. Since $h$ is a log blow-up, we moreover have that $\overline{\mathcal{M}}_{V,\overline{g(x)}}^{gp}\rightarrow \overline{\mathcal{M}}_{\widetilde{V},\overline{f(x)}}^{gp}$ is surjective, which by exactness also implies that $\overline{\mathcal{M}}_{V,\overline{g(x)}}\rightarrow \overline{\mathcal{M}}_{\widetilde{V},\overline{f(x)}}$ is surjective. This implies the desired strictness.

A morphism of Kummer type is also exact (see \cite[Chapter I, Corollary 4.3.10]{lecturesonlogarithmicalgebraicgeometry}), hence, an m-type morphism of Kummer type is a strict morphism of the same type.
\end{proof}
\begin{remark}
    In particular, it follows that an abstract log modification is an isomorphism if and only if it is exact.
\end{remark}
\section{The lifting criterion for m-\'etale and m-smooth morphisms}
The goal of this section is to prove Theorem \ref{main theorem A}. We begin with some preliminary results on formally m-\'etale (m-smooth) morphisms.
\subsection{Strict \'etale morphisms and log blow-ups are formally m-\'etale}
As for the classical notions of formal \'etaleness and formal smoothness, we have:
\begin{lemma}\label{Properties for m-etale and smooth morphisms}
\begin{enumerate}
    \item Being formally m-\'etale or m-smooth is stable under base change and composition.
    \item\label{item: useful composition property} Let $f\colon X\rightarrow Y$ and $g\colon Y\rightarrow Z$ be morphisms of fs log schemes. If the composition $g\circ f\colon X\rightarrow Z$ is formally m-\'etale (resp. {m-smooth}) and $g$ is formally m-\'etale, then $f$ is formally m-\'etale (resp. m-smooth).
\end{enumerate}
 \end{lemma}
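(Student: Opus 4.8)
The plan is to treat both statements as formal lifting-diagram chases, entirely parallel to the classical proofs for formally \'etale/smooth morphisms, the only new bookkeeping being that our test thickenings $i\colon T'\hookrightarrow T$ are exact rather than strict. Three observations make everything go through. First, exactness of $i$ is a condition on the log structures of $T$ and $T'$ alone (the cartesian square in Definition \ref{Definition of formally m-etale smooth morphism}(2)) and makes no reference to the structure morphism; in particular it is preserved under the strict \'etale base changes hidden in the phrase ``strict \'etale locally'' and is unaffected when we reinterpret $i$ over a different base. Second, if $\underline{i}$ is a first order thickening over $Y$ then it is automatically one over $Z$ for any $g\colon Y\to Z$, and conversely a lift produced by formal m-smoothness of $g$ upgrades $i$ into a test thickening over $Y$. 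Third, the uniqueness clause (``exactly one'') in the m-\'etale case is what powers both the cancellation property and the uniqueness half of base change.

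For stability under composition, suppose $f$ and $g$ are formally m-\'etale (resp.\ m-smooth) and let a test square for $g\circ f$ over $Z$ be given, so $i\colon T'\hookrightarrow T$ is an exact first order thickening over $Z$ with data $T'\to X$ and $T\to Z$. Applying formal m-\'etaleness (resp.\ m-smoothness) of $g$ to the square with $T'\to X\to Y$ and $T\to Z$ produces, strict \'etale locally on $T$, a lift $T\to Y$; restricting it to $T'$ recovers $T'\to X\to Y$, so $i$ becomes an exact first order thickening over $Y$ and $T'\to X$, $T\to Y$ form a test square for $f$. A further application of formal m-\'etaleness (resp.\ m-smoothness) of $f$ yields a lift $T\to X$ on a refinement of the cover, and composing the two strict \'etale covers gives existence strict \'etale locally for $g\circ f$. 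In the m-\'etale case uniqueness is a local, hence global, statement: given two lifts $h_1,h_2\colon T\to X$, the maps $f\circ h_1, f\circ h_2$ are two lifts for $g$ and so agree by uniqueness for $g$, after which $h_1=h_2$ by uniqueness for $f$. Stability under base change is similar: given $Y'\to Y$, a test square for $f'\colon X\times_Y Y'\to Y'$ over $Y'$ composes to a test square for $f$ over $Y$, the lift $T\to X$ produced strict \'etale locally is matched with the given $T\to Y'$, and the universal property of the fs fibre product $X\times_Y Y'$ promotes the pair to $T\to X'$; uniqueness follows from uniqueness for $f$ together with that universal property.

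For the cancellation statement (2), assume $g\circ f$ is formally m-\'etale (resp.\ m-smooth) and $g$ is formally m-\'etale. Given a test square for $f$ over $Y$, namely an exact first order thickening $i\colon T'\hookrightarrow T$ over $Y$ with $T'\to X$ and $T\to Y$, we compose $T\to Y\to Z$ to obtain a test square for $g\circ f$ over $Z$, and formal m-\'etaleness (resp.\ m-smoothness) of $g\circ f$ gives, strict \'etale locally, a lift $h\colon T\to X$ with $h|_{T'}=(T'\to X)$ and $(g\circ f)\circ h=(T\to Z)$. It remains to check that $h$ actually lifts the original $T\to Y$, i.e.\ $f\circ h=(T\to Y)$; but $f\circ h$ and $T\to Y$ both restrict to $T'\to X\to Y$ on $T'$ and both compose with $g$ to $T\to Z$, so they are two lifts for the test square of $g$ and hence coincide by uniqueness for $g$. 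Thus $h$ is a lift for $f$, which establishes the existence half strict \'etale locally in both cases. In the m-\'etale case, uniqueness for $f$ follows because any two lifts for $f$ are in particular lifts for $g\circ f$, and so agree by uniqueness for $g\circ f$.

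The argument involves no genuine obstacle; the only points requiring care are purely formal. One must check that exactness and the first order thickening condition are stable under the strict \'etale base changes implicit in ``strict \'etale locally in $T$'' (so that covers may be composed for existence), and that ``exactly one lift strict \'etale locally'' decomposes into local existence together with global uniqueness, the latter being checkable on any strict \'etale cover. The one genuinely load-bearing ingredient is the uniqueness in the definition of formally m-\'etale: without it neither the cancellation property nor the matching step $f\circ h=(T\to Y)$ would go through.
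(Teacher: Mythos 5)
Your proof is correct and takes exactly the approach the paper intends: the paper's own proof consists of the single remark that both statements follow by ``the standard argument for formally \'etale and smooth in $\mathsf{Sch}$,'' and your diagram chases are precisely that argument, adapted to exact thickenings. The details you flag as requiring care --- stability of exactness and of the first-order-thickening condition under strict \'etale base change, the descent of ``exactly one lift strict \'etale locally'' into local existence plus global uniqueness (using that strict \'etale covers are epimorphisms), and the load-bearing role of uniqueness for $g$ in the cancellation step --- are exactly the points the paper leaves implicit, so your write-up is a faithful, correctly executed expansion of the paper's proof.
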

\begin{proof}
The proofs of both statements are identical to the standard arguments for formally étale and formally smooth morphisms in $\mathsf{Sch}$.
\end{proof}
\begin{lemma}\label{strict etale is m-etale} A strict morphism is formally m-\'etale (resp. m-smooth) if and only if the underlying morphism of schemes is formally \'etale (resp. smooth).
\end{lemma}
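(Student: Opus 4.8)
The plan is to exploit the universal property of strict morphisms. For a strict $f\colon X\to Y$ we have $\mathcal{M}_X=\underline{f}^{*}\mathcal{M}_Y$, so giving a morphism of log schemes $T\to X$ is the same as giving a morphism of log schemes $T\to Y$ together with a morphism of underlying schemes $\underline{T}\to\underline{X}$ over $\underline{Y}$. Consequently, for any first order thickening, the set of log lifts in a square as in Definition \ref{Definition of formally m-etale smooth morphism} is in natural bijection with the set of lifts of the underlying square of schemes along $\underline{f}$. The whole lemma then reduces to transporting existence and uniqueness of lifts across this bijection, matching up the localization quantifiers on both sides.

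For the direction ``formally m-\'etale (resp. m-smooth) $\Rightarrow$ $\underline{f}$ formally \'etale (resp. smooth)'', I would start from an arbitrary first order thickening $\underline{T'}\hookrightarrow\underline{T}$ of schemes together with scheme lifting data for $\underline{f}$. I equip $\underline{T}$ with the pullback log structure from $Y$ (making $T\to Y$ strict) and $\underline{T'}$ with its restriction along the thickening, so that $i\colon T'\hookrightarrow T$ becomes strict; strictness of $i$ forces the characteristic square to be cartesian, hence $i$ is exact, and the scheme data upgrades to a log square precisely because $f$ is strict. Applying formal m-\'etaleness (resp. m-smoothness) produces, strict \'etale locally on $T$, a unique (resp. at least one) log lift, which under the bijection above descends to a scheme lift existing \'etale locally on $\underline{T}$. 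In the \'etale case, uniqueness lets these local lifts glue to the required unique global lift; in the smooth case, \'etale local existence is exactly what is asked for.

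For the converse, given an exact thickening $i\colon T'\hookrightarrow T$ with log lifting data for $f$, I pass to the underlying first order thickening of schemes and the induced scheme square. Formal \'etaleness (resp. smoothness) of $\underline{f}$ supplies a unique (resp. an \'etale local) scheme lift $\underline{T}\to\underline{X}$ over $\underline{Y}$, and the strictness bijection promotes this to a log lift $T\to X$ compatible with the given $g'$ and $h$. Note that exactness of $i$ is not even needed here, since formal m-\'etaleness only demands lifts for the smaller class of exact thickenings. This establishes both implications.

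The routine parts are the compatibility checks that the reconstructed log morphisms really restrict to the prescribed ones on $T'$, which follow formally from $f$ being strict. The one point deserving care is the bookkeeping of the ``strict \'etale locally in $T$'' quantifier against the scheme-theoretic definitions: in the smooth case one must observe that \'etale (indeed Zariski) local existence of scheme lifts both suffices for, and is guaranteed by, formal smoothness, which I would handle by refining to affine pieces, where lifts exist and the relevant torsor under the sheaf of derivations has no higher cohomology. I expect this quantifier matching, rather than the core bijection, to be the only genuinely delicate step.
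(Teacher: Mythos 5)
Your proposal is correct and follows essentially the same route as the paper: the paper invokes the cartesian square $X = Y\times_{Y^{\text{triv}}} X^{\text{triv}}$ (via \cite[III, Proposition 1.2.5]{lecturesonlogarithmicalgebraicgeometry}), which is precisely your bijection between log lifting data for $f$ and scheme lifting data for $\underline{f}$. Your write-up is in fact more detailed than the paper's, which leaves implicit both the construction of exact thickenings from scheme thickenings by pulling back log structures and the torsor/quantifier-matching argument in the smooth case.
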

\begin{proof}
For any strict morphism $f\colon X\rightarrow Y$ of log schemes, the following diagram 
\begin{equation}
\begin{tikzcd}
X \arrow[r, "p_X"] \arrow[d, "f"] \ar[dr, phantom, "\lrcorner", very near start]       & X^{\text{triv}} \arrow[d, "f^{\text{triv}}"] \\
Y \arrow[r, "p_Y"] & Y^{\text{triv}}     
\end{tikzcd}
\end{equation}
is cartesian by \cite[III, Proposition 1.2.5]{lecturesonlogarithmicalgebraicgeometry}, where $X^{\text{triv}}$ and $Y^{\text{triv}}$ are $\underline{X}$ and $\underline{Y}$ equipped with trivial log structures and $p_X$ and $p_Y$ are the canonical morphisms. Thus, a lift of $T\rightarrow Y$ to $X$ is the same as a lifting of $T^{\text{triv}}\rightarrow Y^{\text{triv}}$ to $X^{\text{triv}}$.
\end{proof}
\begin{proposition}\label{log blow-up is m-etale}
Let $f\colon X\rightarrow Y$ be a log blow-up. For any commutative diagram of the form below
\begin{center}
\begin{tikzcd}
T' \arrow[r, "l"] \arrow[d, "i"]                 & X \arrow[d, "f"] \\
T \arrow[r, "g"] \arrow[ru, "{\exists!\,h}", dashed] & Y               
\end{tikzcd}
\end{center}
where $i$ is exact and surjective, there is a unique lift $h$ as indicated in the above diagram. In particular, $f$ is formally m-\'etale.
\end{proposition}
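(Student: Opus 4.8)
The plan is to treat uniqueness and existence separately, exploiting the two salient features of a log blow-up: it is a monomorphism and it is characterised by a universal invertibility property. For uniqueness, recall that any log blow-up is a monomorphism (it is a log modification; see \cite[Example 2.7]{park2024inverting} and \cite[Theorem III.2.6.3(3)]{lecturesonlogarithmicalgebraicgeometry}). Hence any two lifts $h_1,h_2\colon T\to X$ with $f\circ h_k=g$ must coincide, so the whole burden is to produce a single morphism $h\colon T\to X$ with $f\circ h=g$. The same monomorphism property then forces the remaining commutativity $h\circ i=l$, since $h\circ i$ and $l$ are both morphisms $T'\to X$ satisfying $f\circ(h\circ i)=g\circ i=f\circ l$.

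For existence I would invoke the universal property of the log blow-up (cf.\ \cite[Section III.2.6]{lecturesonlogarithmicalgebraicgeometry}): writing $X=\mathrm{Bl}_{\mathcal K}(Y)$ for a coherent ideal $\mathcal K\subset\mathcal M_Y$, a morphism $g\colon T\to Y$ lifts (automatically uniquely) through $f$ exactly when the pullback ideal $g^{*}\mathcal K\cdot\mathcal M_T$ is invertible, i.e.\ locally principal. Now $f^{*}\mathcal K\cdot\mathcal M_X$ is invertible on $X$ by construction, and since $l\colon T'\to X$ exists, its pullback $i^{*}(g^{*}\mathcal K\cdot\mathcal M_T)=(g\circ i)^{*}\mathcal K\cdot\mathcal M_{T'}$ is invertible on $T'$. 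Thus the problem reduces to a descent statement: an ideal that becomes locally principal after pulling back along $i$ was already locally principal. Invertibility is a stalkwise condition depending only on the characteristic monoids; here surjectivity of $i$ ensures every $t\in T$ admits a preimage $t'\in T'$ through which to test, while exactness of $i$ supplies the algebraic input.

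The technical heart, and the step I expect to be the main obstacle, is the resulting monoid lemma: if $\theta\colon P\to P'$ is an exact morphism of sharp fs monoids and $\mathfrak a\subset P$ is a finitely generated ideal whose image generates a principal ideal $\mathfrak aP'=c+P'$, then $\mathfrak a$ is itself principal. I would argue as follows. Choosing minimal generators $a_1,\dots,a_n$ of $\mathfrak a$, sharpness of $P'$ forces $c=\theta(a_{i_0})$ for some index $i_0$ (the generator of a principal ideal must lie among the $\theta(a_i)$), whence $\theta(a_j)-\theta(a_{i_0})\in P'$ for all $j$. Since $a_j-a_{i_0}\in P^{gp}$ and $\theta^{gp}(a_j-a_{i_0})\in P'$, exactness of $\theta$ yields $a_j-a_{i_0}\in P$; thus $a_{i_0}$ is a minimum of $\mathfrak a$ and $\mathfrak a=a_{i_0}+P$ is principal. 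To apply this, one checks that exactness of $i$ in the sense of Definition \ref{Definition of formally m-etale smooth morphism} gives, at each pair of geometric points $\bar t'\mapsto\bar t$, an exact map $\mathcal M_{T,\bar t}\to\mathcal M_{T',\bar t'}$, and that exactness descends to the sharpened characteristic monoids $\overline{\mathcal M}_{T,\bar t}\to\overline{\mathcal M}_{T',\bar t'}$.

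Finally, the clause ``in particular $f$ is formally m-\'etale'' is immediate: in the defining square of Definition \ref{Definition of formally m-etale smooth morphism} the thickening $i$ is exact by hypothesis and surjective because a first order thickening is a homeomorphism on underlying topological spaces. The proposition then furnishes a unique global lift, which is in particular the required \emph{exactly one} dotted arrow strict \'etale locally in $T$.
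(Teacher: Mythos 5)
Your proposal is correct and follows essentially the same route as the paper: both reduce existence via the universal property of the log blow-up to showing the pulled-back ideal is stalkwise principal, use surjectivity of $i$ to pick a test point upstairs, and use exactness to descend the divisibility relation defining principality. The only cosmetic differences are that you phrase the key step as a lemma on sharp characteristic monoids with chosen generators (the paper argues directly on stalks of $\mathcal{M}_T$, absorbing units by integrality) and that you derive uniqueness from the monomorphism property of log blow-ups rather than reading it off the universal property itself.
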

\begin{proof}
Let $\mathcal{I}\subset \mathcal{M}_Y$ be the coherent ideal sheaf such that $X=\text{Bl}_{\mathcal{I}}(Y)$. By the universal property of the log blow-up, the lift $h$ exists if and only if $g^*\mathcal{I}$ is \'etale locally principal and $h$ is then also unique. By \cite[III, Proposition 2.6.1]{lecturesonlogarithmicalgebraicgeometry}, it suffices to show that $g^*\mathcal{I}_{\overline{t}}$ is generated by one element for every $t\in T$. As $i$ is surjective, there exists $t'\in T'$ with $i(t')=t$. Since $i^*g^*\mathcal{I}_{T',\overline{t'}}$ is principal and generated by $i_{log}(g^*\mathcal{I}_{\overline{t}})$, we can find a generator of the shape $i_{log}(a)$ for $a\in \mathcal{M}_{T,\overline{t}}$. We now claim that $a$ also generates $g^*\mathcal{I}_{\overline{t}}$ itself. Indeed, for any $b\in g^*\mathcal{I}_{\overline{t}}$ we have $i_{log}(b)\in i_{log}(a)\cdot \mathcal{M}_{T',\overline{t'}}$ or equivalently $i_{log}(b)/ i_{log}(a)\in \mathcal{M}_{T',\overline{t'}}$. By exactness of $i$ this implies that $b/a\in \mathcal{M}_{T,t}$ as desired.
\end{proof}
\subsection{Proof of Theorem \ref{main theorem A} and consequences}
To prove Theorem \ref{main theorem A}, we introduce the following technical lemma:
\begin{lemma}\label{Nifty Chart}
Let $f\colon X\rightarrow Y$ be a morphism in $\mathsf{fsLogSch}$ and $\beta\colon Q\rightarrow \Gamma(Y,\mathcal{M}_Y)$ be a chart. For any point $x\in X$, there is an \'etale neighborhood $U$ of $x$ and a morphism of charts
\begin{equation}
\begin{tikzcd}
Q \arrow[r, "\beta"] \arrow[d, "\theta"] & {\Gamma(Y,\mathcal{M}_{Y})} \arrow[d, "f^{log}"] \\
P \arrow[r, "\alpha"]                    & {\Gamma(U,\mathcal{M}_X)}                      \end{tikzcd}
\end{equation}
with the following properties:
\begin{enumerate}
    \item $\alpha$ is exact at $x$ or equivalently,  $\overline{P}\xrightarrow{\overline{\alpha}_{\overline{x}}}\overline{\mathcal{M}}_{X,\overline{x}}$ is an isomorphism.
    \item $\theta$ is injective.
\end{enumerate}
\end{lemma}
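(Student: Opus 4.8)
The plan is to construct all the data at the level of the stalk $\mathcal{M}_{X,\overline{x}}$ and then spread it out to an étale neighborhood. Write $\overline{M}:=\overline{\mathcal{M}}_{X,\overline{x}}$. Since $\overline{M}$ is sharp and saturated, its groupification $\overline{M}^{gp}$ is torsion-free: if $t\in\overline{M}^{gp}$ satisfies $nt=0$, then saturation gives $\pm t\in\overline{M}$, so $t$ is invertible and hence $t=0$. Thus $\overline{M}^{gp}$ is free of finite rank, and the projection $\pi\colon\mathcal{M}_{X,\overline{x}}^{gp}\twoheadrightarrow\overline{M}^{gp}$ (with kernel $\mathcal{O}_{X,\overline{x}}^\times$) admits a splitting. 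This yields an identification $\mathcal{M}_{X,\overline{x}}^{gp}\cong\mathcal{O}_{X,\overline{x}}^\times\oplus\overline{M}^{gp}$ under which $\mathcal{M}_{X,\overline{x}}$ corresponds to $\mathcal{O}_{X,\overline{x}}^\times\oplus\overline{M}$. Composing $\beta^{gp}$ with $f^{log}$ into $\mathcal{M}_{X,\overline{x}}^{gp}$ and with the two projections, I record the group homomorphisms $u\colon Q^{gp}\to\mathcal{O}_{X,\overline{x}}^\times$ and $\phi:=\pi\circ(f^{log}\circ\beta)^{gp}\colon Q^{gp}\to\overline{M}^{gp}$.

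The conceptual heart is to produce a single fs monoid $P$ carrying both an injection from $Q$ and a neat chart structure, which I do by means of an explicit fiber-product monoid:
\[
P:=\{(a,m)\in Q^{gp}\oplus\overline{M}^{gp}\ :\ \phi(a)+m\in\overline{M}\}.
\]
This is the preimage of the fs submonoid $\overline{M}\subseteq\overline{M}^{gp}$ under the surjective group homomorphism $\rho(a,m)=\phi(a)+m$; it is therefore integral, finitely generated (an extension of the finitely generated monoid $\overline{M}$ by the group $Q^{gp}$) and saturated (if $n(\phi(a)+m)\in\overline{M}$ then $\phi(a)+m\in\overline{M}$), so $P$ is fs with $P^{gp}=Q^{gp}\oplus\overline{M}^{gp}$. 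I define $\theta\colon Q\to P$ by $\theta(q)=(q,0)$; this is well defined because $\phi(q)=\pi(f^{log}\beta(q))\in\overline{M}$ for $q\in Q$, and it is injective since $\theta^{gp}$ is the inclusion of a direct summand, giving property (2). I define $\alpha\colon P\to\mathcal{M}_{X,\overline{x}}$ by $(a,m)\mapsto(u(a),\phi(a)+m)$ in the coordinates above; the defining condition of $P$ guarantees the image lands in $\mathcal{M}_{X,\overline{x}}$, and $\alpha\circ\theta$ agrees by construction with $f^{log}\circ\beta$, so the chart square commutes.

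It remains to verify neatness and to globalize. Because $\overline{M}$ is sharp, an element $(a,m)\in P$ is invertible iff $\phi(a)+m=0$, so $P^\times=\{(a,-\phi(a)):a\in Q^{gp}\}$; hence the surjection $\rho\colon P\to\overline{M}$ descends to an isomorphism $\overline{\alpha}_{\overline{x}}\colon\overline{P}\xrightarrow{\sim}\overline{M}=\overline{\mathcal{M}}_{X,\overline{x}}$, which is property (1) (equivalently, exactness of $\alpha$ at $x$). Finally, since $P$ is finitely generated, $\alpha$ is already defined over some étale neighborhood $U$ of $x$, and as $\overline{\alpha}_{\overline{x}}$ is an isomorphism the standard neat-chart criterion (\cite[II.2.3]{lecturesonlogarithmicalgebraicgeometry}) shows that, after shrinking $U$, the map $\alpha\colon P\to\Gamma(U,\mathcal{M}_X)$ is a genuine chart. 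I expect the main technical point to be precisely this simultaneous bookkeeping — engineering $P$ so that $\theta$ stays injective while the characteristic monoid is left unchanged — together with the routine passage from the stalk to a neighborhood; once $P$ is set up correctly, the remaining verifications are immediate from sharpness and saturation of $\overline{M}$.
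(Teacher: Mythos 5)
Your proof is correct, but it takes a genuinely different route from the paper's. The paper starts from an arbitrary chart $(\theta'\colon Q\to P',\ \alpha'\colon P'\to\Gamma(U,\mathcal{M}_X))$ of the morphism $f$ (standard existence of charts of morphisms) and then repairs it in two steps: it forces injectivity of $\theta$ by replacing $P'$ with $P'\oplus Q^{gp}$, mapped to $\Gamma(U,\mathcal{M}_X)$ through the projection to $P'$, and it forces exactness at $x$ by localizing at the face $S=(\alpha'_{\overline{x}})^{-1}\mathcal{O}^\times_{X,\overline{x}}$, so the final chart is a localization of $P'\oplus Q^{gp}$. You instead build the chart from scratch at the stalk: you split $\mathcal{M}^{gp}_{X,\overline{x}}\cong\mathcal{O}^\times_{X,\overline{x}}\oplus\overline{\mathcal{M}}^{gp}_{X,\overline{x}}$ (using freeness of the characteristic group, i.e.\ saturatedness), take $P$ to be the preimage of $\overline{\mathcal{M}}_{X,\overline{x}}$ under $Q^{gp}\oplus\overline{\mathcal{M}}^{gp}_{X,\overline{x}}\to\overline{\mathcal{M}}^{gp}_{X,\overline{x}}$, and then spread out. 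Both work; the paper's argument is shorter given the standard toolbox and never needs the splitting (so it would survive in the merely fine setting), whereas yours produces a more explicit $P$, with $\overline{P}\cong\overline{\mathcal{M}}_{X,\overline{x}}$ and $P^{gp}=Q^{gp}\oplus\overline{\mathcal{M}}^{gp}_{X,\overline{x}}$ visible by construction, and makes exactness automatic rather than something to be arranged. Two points to tighten in your write-up: (i) ``neat'' is not the right word, since your $P$ has unit group $P^\times\cong Q^{gp}$; your chart is \emph{exact} at $x$ but not neat, and the criterion you actually need is that a homomorphism $\alpha\colon P\to\mathcal{M}_{X,\overline{x}}$ from a fine monoid inducing an isomorphism $P/\alpha^{-1}(\mathcal{O}^\times_{X,\overline{x}})\to\overline{\mathcal{M}}_{X,\overline{x}}$ extends to a chart on an \'etale neighborhood --- which holds here precisely because you computed $\alpha^{-1}(\mathcal{O}^\times_{X,\overline{x}})=P^\times$; (ii) besides spreading out $\alpha$ itself, you should note that the two maps $\alpha\circ\theta$ and $f^{log}\circ\beta$ from $Q$ to $\Gamma(U,\mathcal{M}_X)$ agree after further shrinking $U$, since they agree in the stalk and $Q$ is finitely generated --- without this the chart square is only known to commute stalkwise.
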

\begin{proof}
Let $U\rightarrow X$ be any \'etale neighborhood of $x$ together with a chart for $f$ given by $\alpha'\colon P'\rightarrow \Gamma(U,\mathcal{M}_X)$ and $Q\xrightarrow{\theta'} P'$. Note that $P'\oplus Q^{gp}\xrightarrow{\pi_{P'}} P' \xrightarrow{\alpha'} \Gamma(U,\mathcal{M}_X)$ is again a chart at $x$ and $Q\xrightarrow{(\theta',\text{incl})}P'\oplus Q^{gp}$ is injective, hence we may assume that $\theta'$ is already injective. 

It remains to ensure that the chart is exact at $x$. For this, note that $S\coloneqq (\alpha'_{\overline{x}})^{-1}\mathcal{O}^\times_{X,\overline{x}}\subset P'$ is a finitely generated submonoid as it is generated by a subset of the generators of $P'$. By shrinking $U$ we may therefore assume that $S = (\alpha')^{-1}\mathcal{O}^\times_X(U)$ and hence we get an induced morphism $\alpha\colon S^{-1}P'\rightarrow \Gamma(U,\mathcal{M}_X)$ that is again a chart so that $S^{-1}P'\rightarrow \mathcal{M}_{X,\overline{x}}$ is exact. Thus, $P=S^{-1}P'$ and $\theta: Q\xhookrightarrow{\theta'} P'\hookrightarrow P$ are as desired.
\end{proof}

\begin{proof}[Proof of Theorem \ref{main theorem A}]
By Proposition \ref{basic properties of m-type morphisms}\eqref{locality}, m-\'etale (resp. m-smooth) are formally m-\'etale (resp. m-smooth) as strict \'etale (resp. smooth) morphisms and log blow-ups are formally m-\'etale (resp. m-smooth).

For the converse, let $f\colon X\rightarrow Y$ be locally of finite presentation and formally m-\'etale (resp. m-smooth) and suppose that $Y$ admits a global chart $Q\rightarrow \Gamma(Y,\mathcal{M}_Y)$.
By Lemma \ref{Nifty Chart}, for every $x\in X$ there is  an \'etale neighborhood $U\rightarrow X$ of $x$ and a chart of $f$ given by $Q\hookrightarrow P'$ and $P'\rightarrow \Gamma(U,\mathcal{M}_X)$ that is exact at $x$.

Let $P\coloneqq P'\cap Q^{gp}$, which by \cite[Theorem I.2.1.17(2)]{lecturesonlogarithmicalgebraicgeometry} is fine and saturated. Since $P'$ is an exact chart at $\overline{x}$ and $P\hookrightarrow P'$ is an exact morphism of monoids, it follows from \cite[Proposition III.2.2.2(1)]{lecturesonlogarithmicalgebraicgeometry} that the induced morphism of log structures $P^{a}\rightarrow P'^{a} = \mathcal{M}_X|_{U}$ is exact at $x$ and $P$ is an exact chart of $P^{a}$ at $x$. In particular, we have the following diagram
\begin{equation}\label{the first commutative diagram for Kato-type criterion}
\begin{tikzcd}
\overline{P} \arrow[d, "\overline{\alpha}_{\overline{x}}"',"\cong"] \arrow[r, "\phi", hook]         & \overline{P'} \arrow[d, "\overline{\alpha'}_{\overline{x}}"',"\cong"] \\
\overline{P^{a}}_{\overline{x}} \arrow[r, "\phi_{\overline{x}}", hook] & \overline{P'^{a}}_{\overline{x}}.              
\end{tikzcd}   
\end{equation}
The morphisms $\overline{\alpha}_{\overline{x}}$ and $\overline{\alpha'}_{\overline{x}}$ are isomorphisms by the assumption of exactness. Since both morphisms $\phi$ and $\phi_{\overline{x}}$ are exact and both monoids $\overline{P}$ and $\overline{P'}$ are sharp, it follows from \cite[Proposition I.2.1.16(2)]{Log-etale-cohomology-I} that $\phi$ and $\phi_{\overline{x}}$ are injective.

By pulling back those log structures to $\text{Spec}(k(x))$, we get the following diagram 
\begin{equation}\label{the second diagram in the kato-type chart criterion}
\begin{tikzcd}
\text{Spec}(P'\rightarrow k(x)) \arrow[d, "i"] \arrow[r, hook]                & X \arrow[d, "f"] \\
\text{Spec}(P\rightarrow k(x)) \arrow[r, "\tilde{f}"] \arrow[ru, "g", dashed] & Y              
\end{tikzcd}
\end{equation}
where $\tilde{f}(x)=f(x)$.  
Since $i$ is exact (being a pullback of $(U,P'^{a})\rightarrow (U,P^{a})$, which is exact at $x$) and also a (trivial) nilpotent thickening, we get a lift $g$ as shown in the diagram. By commutativity of the upper triangle, this gives a section of the injection $\overline{P^{a}}_{\overline{x}}\hookrightarrow \overline{P'^{a}}_{\overline{x}}$ and hence $\overline{P^{a}}\cong \overline{P'^{a}}$. By \eqref{the first commutative diagram for Kato-type criterion}, we get $\overline{P}\cong \overline{P'}$, which implies that $P$ is also a chart of $\mathcal{M}_X$ in some possibly smaller \'etale neighborhood of $x$, which we might as well assume to be $U$. Hence, we found a chart
\begin{equation}
\begin{tikzcd}
Q \arrow[r] \arrow[d] & {\Gamma(Y,\mathcal{M}_Y)} \arrow[d] \\
P \arrow[r]           & {\Gamma(U,\mathcal{M}_X)}          
\end{tikzcd}
\end{equation}
for $f$ which satisfies the desired conditions $Q\hookrightarrow P$ and $Q^{gp}=P^{gp}$. 

Finally, we consider the following commutative diagram:
\begin{equation}
\begin{tikzcd}
    U \arrow[rd, "f_4"'] \arrow[r, "f_1"] \arrow[rr, "f_2", bend left] & {Y\times_{\mathbb{A}_Q}\mathbb{A}_P} \arrow[d, "f_5"] \arrow[r, "f_3"] \ar[dr, phantom, "\lrcorner", very near start] & Y \arrow[d, "f_7"]\\
    & {\mathbb{A}_P} \arrow[r, "f_6"] & \mathbb{A}_Q.
\end{tikzcd}
\end{equation}
By Lemma \ref{Lemma for special type of morphisms of monoids}, $f_6$ is m-\'etale, hence, $f_3$ is also m-\'etale. Since $f_2$ is formally m-\'etale (resp. m-smooth), by Lemma \ref{Properties for m-etale and smooth morphisms}\eqref{item: useful composition property}, $f_1$ is formally m-\'etale (resp. m-smooth). However, $f_1$ is strict as $f_4$ is strict and $f_5$, being the base change of $f_7$, is also strict. By Lemma \ref{strict etale is m-etale}, this implies that $f_1$ is formally \'etale (resp. smooth) and since $f$ and $f_3$ are locally of finite presentation, it follows from \cite[02FV]{stacks-project} that $f_1$ is locally of finite presentation. This concludes the proof.
\end{proof}
\begin{corollary}\label{corollary: property that makes small site well-behaved}
    Let $f\colon X\rightarrow Y$ and $g\colon Y\rightarrow Z$ be morphisms of fs log schemes. If $g$ and $g\circ f$ are m-\'etale (resp. m-open), then $f$ must be m-\'etale (resp. m-open).
\end{corollary}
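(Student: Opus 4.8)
The plan is to treat the two assertions in turn, handling the m-\'etale case via the infinitesimal criterion of Theorem \ref{main theorem A} and then bootstrapping the m-open case from it.

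For the m-\'etale statement, I would first translate everything through Theorem \ref{main theorem A}, which identifies m-\'etale morphisms with those that are formally m-\'etale and locally of finite presentation. Since $g$ and $g\circ f$ are m-\'etale, they are in particular formally m-\'etale, so Lemma \ref{Properties for m-etale and smooth morphisms}\eqref{item: useful composition property} directly yields that $f$ is formally m-\'etale. To finish this case it remains to check finite presentation: as $g$ is m-\'etale it is log \'etale by Proposition \ref{basic properties of m-type morphisms}\eqref{m-is-log}, so its underlying morphism of schemes is locally of finite type, and since $g\circ f$ is locally of finite presentation, the scheme-theoretic cancellation \cite[02FV]{stacks-project} shows that $f$ is locally of finite presentation. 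Theorem \ref{main theorem A} then returns the conclusion that $f$ is m-\'etale.

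For the m-open statement, I would first note that m-open morphisms are m-\'etale by Proposition \ref{basic properties of m-type morphisms}\eqref{m-open is m-etale}, so the case just proved already shows that $f$ is m-\'etale, hence log \'etale. By Definition \ref{m-open morphisms} it then suffices to exhibit an open cover of $X$ on which $f$ restricts to a monomorphism. Since $g\circ f$ is m-open, I can cover $X$ by opens $W$ for which $W\hookrightarrow X\xrightarrow{g\circ f}Z$ is a log \'etale monomorphism; in particular each composite $W\to Z$ is a monomorphism. Factoring it as $W\xrightarrow{f|_W}Y\xrightarrow{g}Z$ and using the elementary fact that if $g\circ(f|_W)$ is a monomorphism then so is $f|_W$, I conclude that each $f|_W\colon W\to Y$ is a monomorphism. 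Since $f$ is log \'etale and the inclusion $W\hookrightarrow X$ is a strict open immersion, hence log \'etale, the restriction $f|_W$ is log \'etale as well, so $W\to Y$ is a log \'etale monomorphism; as these $W$ cover $X$, the morphism $f$ is m-open.

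I expect the only point requiring care to be the finite-presentation bookkeeping in the m-\'etale case, namely checking that the purely scheme-theoretic cancellation of \cite[02FV]{stacks-project} suffices in the log setting, which it does because for fs log schemes finite presentation is a condition on the underlying morphism of schemes. The formal-\'etaleness cancellation and the monomorphism cancellation are formal, so the argument is essentially an assembly of results already in hand.
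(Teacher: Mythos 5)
Your proposal is correct and takes essentially the same approach as the paper: the m-\'etale case is exactly the paper's combination of Theorem \ref{main theorem A}, Lemma \ref{Properties for m-etale and smooth morphisms}\eqref{item: useful composition property} and \cite[02FV]{stacks-project}, and the m-open case is the same cover-by-opens-plus-monomorphism-cancellation argument. The only cosmetic difference is that you deduce log \'etaleness of each restriction $f|_W$ from the already-proved m-\'etale case, whereas the paper cancels log \'etaleness directly against $g$ being log \'etale; both are valid.
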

\begin{proof}
    For m-\'etale morphisms, this follows directly from Theorem \ref{main theorem A}, Lemma \ref{Properties for m-etale and smooth morphisms}\eqref{item: useful composition property} and \cite[02FV]{stacks-project}. Now we will deal with the case of m-open morphisms. For $x\in X$, choose an open neighborhood $U\subset X$  of $x$ so that $g\circ f|_U$ is a log \'etale monomorphism. Since $g$ is log \'etale, it follows that $f|_U$ is also a log \'etale monomorphism. This shows that $f$ is also m-open.
\end{proof}
Moreover, for étale, smooth, and flat morphisms in the category of schemes, the following descent property holds:
For any sequence of morphisms
\[
X \xrightarrow{f} Y \xrightarrow{g} Z,
\]
if $g \circ f$ is étale (resp. smooth), $f$ is smooth and surjective, and $g$ is locally of finite presentation, then $g$ is necessarily étale (resp. smooth). It is natural to ask whether the same property also holds for m-type morphisms. However, this fails, as shown by the following example due to Chikara Nakayama:
\begin{example}\cite[Example IV.4.3.4]{lecturesonlogarithmicalgebraicgeometry}\label{the example pointing the mistake on abstract log modification}
Consider the following diagram:
\begin{center}
\begin{tikzcd}
X \arrow[d, "h"] \arrow[r, "f"] \ar[dr, phantom, "\lrcorner", shift={(-1.5,0.3)}]  & {Y\coloneqq\text{Spec}(\mathbb{N}^2\rightarrow\mathbb{C}[x,y]/(x^2,y^2,xy))} \arrow[d, "g", hook'] \\
\widetilde{Z} \arrow[d] \arrow[r,"\pi'"] \ar[dr, phantom, "\lrcorner", shift={(-2,0.3)}]           & {Z\coloneqq\text{Spec}(\mathbb{N}^2\rightarrow\mathbb{C}[x,y]/(x^2,y^2))} \arrow[d, "i", hook']    \\
\text{Bl}_0(\mathbb{A}^2) \arrow[r,"\pi"] & \mathbb{A}^2.                              
\end{tikzcd}  
\end{center}
The spaces $\mathbb{A}^2$ and $\mathrm{Bl}_0(\mathbb{A}^2)$ carry standard toric log structures, and $\pi$ is a toric blow-up (hence a log blow-up). Since log blow-ups are stable under base change, it follows that $f$ and $\pi'$ are also log blow-ups and thus universally surjective by Proposition \ref{prop: universally surjective examples}\eqref{item: log blow is univ surj}. Either from the universal property of the log blow-up or through a direct computation, one can verify that $h$ is an isomorphism and hence $g\circ f$ is a log blow-up. However, $g$ is not even log flat since it is strict and its underlying morphism of schemes is not flat.
\end{example}   
\section{M-type topologies}
First, we establish the following:
\begin{lemma}\label{lemma: m-type tops are tops}
    The m-type covers of Definition \ref{definition of m-type coverings} form Grothendieck topologies.
\end{lemma}
\begin{proof}
    For the m-open, m-\'etale, m-smooth and m-fppf topologies, this follows from Proposition \ref{basic properties of m-type morphisms} \eqref{m-is-s-u-com}. For the m-fpqc topology, it remains to check stability of covers under base change. Indeed, let $f\colon X\rightarrow Y$ be an m-fpqc cover and $g\colon Y'\rightarrow Y$ and a morphism of fs log schemes. To show that $f'\colon X'=X\times_Y Y'\rightarrow Y'$ is m-fpqc, it suffices to consider quasi-compact open subsets $U\subset Y'$ so that $g(U)\subset W$ for some affine open $W\subset Y$. Since $f$ is m-fpqc, there is a quasi-compact open $V\subset X$ so that $W$ is the universal image of $V$. Since the scheme-theoretic base change $\underline{U}\times_{\underline{W}}\underline{V}$ is quasi-compact and $\underline{U\times_W V}\rightarrow \underline{U}\times_{\underline{W}}\underline{V}$ is affine, it follows that $U\times_W V\subset X'$ is quasi-compact and universally surjects onto $U$.
\end{proof}
Next, we first show that the m-type sites are functorial. The following fact will be useful:
\begin{proposition}\label{m-topologies has finite limits}
All big and small m-type sites over $S$ in Definition \ref{definition of m-type sites} admit all finite limits, which agree with the limits in $\mathsf{fsLogSch}/S$.
\end{proposition}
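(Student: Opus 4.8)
The plan is to treat the big and small sites separately. For the big $\tau$-sites the statement is immediate: by Definition \ref{definition of m-type sites} the underlying category of $(\mathsf{fsLogSch}/S)_\tau$ is literally $\mathsf{fsLogSch}/S$, and this slice category has all finite limits (with terminal object $\mathrm{id}_S$ and pullbacks computed as in $\mathsf{fsLogSch}$), so there is nothing to prove. Hence the entire content concerns the small sites $S_{\tau'}$ for $\tau' \in \{\text{m-open}, \text{m-\'etale}\}$, which are full subcategories of $\mathsf{fsLogSch}/S$.

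First I would recall the standard fact that a category has all finite limits as soon as it has a terminal object and all pullbacks, and that for a full subcategory $\mathcal{D} \subseteq \mathcal{C}$ it suffices to check that the terminal object of $\mathcal{C}$ lies in $\mathcal{D}$ and that $\mathcal{D}$ is closed under forming pullbacks in $\mathcal{C}$; fullness then automatically upgrades these into genuine limits in $\mathcal{D}$ that agree with those computed in $\mathcal{C}$. For the terminal object, $\mathrm{id}_S\colon S \to S$ is a local isomorphism, hence m-open by Proposition \ref{basic properties of m-type morphisms}\eqref{strict-is-m} and thus m-\'etale by \eqref{m-open is m-etale}, so $\mathrm{id}_S$ is an object of $S_{\tau'}$.

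The key input for pullbacks is Corollary \ref{corollary: property that makes small site well-behaved}: it guarantees that every morphism of $S_{\tau'}$ is itself $\tau'$-type. Indeed, a morphism in $S_{\tau'}$ is a morphism $X \to Z$ over $S$ between objects whose structure maps $X \to S$ and $Z \to S$ are $\tau'$-type, and applying the Corollary to the composite $X \to Z \to S$ yields that $X \to Z$ is $\tau'$-type. Now given a cospan $X \to Z \leftarrow Y$ in $S_{\tau'}$, I would form $X \times_Z Y$ in $\mathsf{fsLogSch}$, which also computes the fiber product in the slice $\mathsf{fsLogSch}/S$. By the previous observation $X \to Z$ is $\tau'$-type, hence so is its base change $X \times_Z Y \to Y$ by Proposition \ref{basic properties of m-type morphisms}\eqref{m-is-s-u-com}; composing with the $\tau'$-type map $Y \to S$ and invoking stability under composition from the same item shows that $X \times_Z Y \to S$ is $\tau'$-type, so $X \times_Z Y$ lies in $S_{\tau'}$. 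This establishes closure under pullbacks and, together with the terminal object, finishes the argument.

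I do not anticipate a serious obstacle. The only formal point requiring care is the reduction ``terminal object plus pullbacks imply all finite limits,'' together with the fact that closure of a full subcategory under these operations forces the limits to agree with those in the ambient category; but this is standard category theory. All the genuinely geometric content is packaged into Corollary \ref{corollary: property that makes small site well-behaved} and the base-change and composition stability recorded in Proposition \ref{basic properties of m-type morphisms}\eqref{m-is-s-u-com}, both of which are already available.
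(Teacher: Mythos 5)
Your proof is correct and takes essentially the same route as the paper's: both reduce to the terminal object plus fiber products, dispose of the big sites as immediate, and use Corollary \ref{corollary: property that makes small site well-behaved} to see that morphisms in the small site are themselves $\tau'$-type, so that the ambient fiber products (whose structure maps are $\tau'$-type by the base-change and composition stability of Proposition \ref{basic properties of m-type morphisms}) remain objects of $S_{\tau'}$. Your write-up merely spells out details the paper leaves implicit.
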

\begin{proof}
Since all big and small sites have a terminal object, it is enough to show the existence of fiber products. For the big sites, the claim is immediate. For the small m-open (resp. m-\'etale) site, one uses Corollary \ref{corollary: property that makes small site well-behaved}, which implies that the morphisms in that site are also m-open (resp. m-\'etale). Hence, the usual fiber products are indeed objects in this site.
\end{proof}
\begin{remark}\label{remark on functoriality}
Let $\tau \in \{ \text{m-open}, \text{m-\'etale}, \text{m-smooth},\text{m-fppf},\text{m-fpqc}\}$ and $\tau'\in  \{ \text{m-open}, \\\text{m-\'etale}\}$. 
For any morphism $f\colon X\rightarrow Y$ of fs log schemes, the pullback functor $f^*\colon \mathsf{fsLogSch}/Y\rightarrow \mathsf{fsLogSch}/X$ 
preserves finite limits. By Proposition~\ref{m-topologies has finite limits}, the sites $(\mathsf{fsLogSch}/Y)_\tau$ and $(\mathsf{fsLogSch}/X)_\tau$ 
(resp. $Y_{\tau'}$ and $X_{\tau'}$) admit all finite limits and the resulting pullback functors $f^*\colon (\mathsf{fsLogSch}/Y)_\tau\rightarrow (\mathsf{fsLogSch}/X)_\tau$ (resp. $Y_{\tau'}\rightarrow X_{\tau'}$) preserve them. Consequently, $f^*$ induces morphisms of sites for all our big and small sites, and hence morphisms of their topoi
(see \cite[Theorem VII.10.2]{SheavesInGeometryAndLogic}).
\end{remark}
The following proposition will be important for the proof of Theorem \ref{thm:sheaf_conditions} and contains a corrected version of \cite[Lemma 3.14 (1)]{NAKAYAMALOGETALECohomology2}, from which the rest of \cite[Lemma 3.14]{NAKAYAMALOGETALECohomology2} follows immediately.
\begin{proposition}\label{prop: quasi-compact is quasi-compact}
An object in any of the m-type sites or the log \'etale site is quasi-compact (resp. quasi-separated) if and only if it is quasi-compact (resp. quasi-separated) on the level of underlying topological spaces.
\end{proposition}
\begin{proof}
    Note that integralization and saturation are quasi-compact morphisms; hence it suffices to show the quasi-compactness assertion.
    Since the above topologies contain strict open covers, an fs log scheme is quasi-compact if it is quasi-compact in one of these sites. For the converse, let $\{U_i\rightarrow U\}_{i\in I}$ be a cover, where the underlying scheme $\underline{U}$ is quasi-compact. In the m-fpqc case, it follows from the definition that we can find a finite subcover. Otherwise, the cover is log flat and locally finitely presented, which by \cite[Theorem 3.2.2]{kato_logarithmic_degeneration} implies that all morphisms $U_i^{\text{val}}\rightarrow U^{\text{val}}$ have open image. Since $U^{\text{val}}$ is quasi-compact by Theorem \ref{thm: valuative space}\eqref{item: properties of valuative space}, there is a finite $I'\subset I$ so that $\coprod_{i'\in I'} U_{i'}^{\text{val}} \rightarrow U^{\text{val}}$ is still surjective. By Lemma \ref{lemma: universal surjectivity def}, the family $\{U_{i'}\rightarrow U\}_{i'\in I'}$ is our desired subcover.
\end{proof}
From this, we immediately get:
\begin{corollary}
    Every m-type cover is also an m-fpqc cover.
\end{corollary}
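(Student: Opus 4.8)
The plan is to verify directly that a $\tau$-cover $\{U_i \to U\}_{i\in I}$ satisfies the two requirements of an m-fpqc cover in Definition \ref{definition of m-type coverings}: that $f\colon \coprod_{i\in I} U_i \to U$ is m-flat, and that every quasi-compact open of $U$ is the image of a quasi-compact open along $f$. The first requirement is immediate. For $\tau \in \{\text{m-open},\text{m-\'etale},\text{m-smooth}\}$ the factorization characterization of Proposition \ref{basic properties of m-type morphisms}\eqref{locality}, together with the classical implications that a strict \'etale (resp.\ smooth) morphism is in particular strict flat, shows that $f$ is m-flat; for $\tau = \text{m-fppf}$ the map $f$ is m-flat by definition; and for $\tau = \text{m-fpqc}$ there is nothing to prove. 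In all cases except $\tau=\text{m-fpqc}$ the morphism $f$ is moreover locally of finite presentation, since log \'etale and log smooth morphisms are.

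For the second requirement, fix a quasi-compact open $W \subset U$. Base-changing the cover along the open immersion $W \hookrightarrow U$ yields a family $\{U_i\times_U W \to W\}_{i\in I}$ that is again m-flat, locally of finite presentation, and universally surjective, as all three properties are stable under base change. I would then refine this family by covering each $U_i \times_U W$ by quasi-compact (e.g.\ affine) opens $V_{ij} \subset U_i \times_U W$. Since the $V_{ij}$ jointly cover $U_i \times_U W$, a short base-change check shows that the enlarged family $\{V_{ij} \to W\}_{i,j}$ remains universally surjective, and now each $V_{ij}$ is quasi-compact and m-flat, locally of finite presentation over $W$.

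At this point the valuative-space argument from the proof of Proposition \ref{prop: quasi-compact is quasi-compact} applies essentially verbatim. Because $\underline{W}$ is quasi-compact, the valuative log space $W^{\text{val}}$ is quasi-compact, and since each $V_{ij} \to W$ is m-flat and locally of finite presentation, the induced maps $V_{ij}^{\text{val}} \to W^{\text{val}}$ have open image, while universal surjectivity guarantees that $\coprod_{i,j} V_{ij}^{\text{val}} \to W^{\text{val}}$ is surjective. Thus the open images $\{\operatorname{im}(V_{ij}^{\text{val}})\}$ form an open cover of the quasi-compact space $W^{\text{val}}$, so finitely many of them, indexed by a finite set $F$, already cover $W^{\text{val}}$; translating back, the finite family $\{V_{ij} \to W\}_{(i,j)\in F}$ is universally surjective. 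Setting $V \coloneqq \coprod_{(i,j)\in F} V_{ij}$, each $V_{ij}$ is open in $U_i\times_U W$ and hence in $U_i$, so $V$ is a quasi-compact open of $\coprod_{i\in I} U_i$, and universal surjectivity of the finite family gives $f(V) = W$. This exhibits $W$ as the image of a quasi-compact open along $f$, completing the verification.

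The main obstacle is purely bookkeeping around the valuative space: one must confirm that refining a universally surjective family by opens preserves universal surjectivity, and that universal surjectivity together with openness of image genuinely pass through $(-)^{\text{val}}$ in both directions, so that a finite open subcover of $W^{\text{val}}$ pulls back to a universally surjective finite subfamily over $W$. Both points already underlie the proof of Proposition \ref{prop: quasi-compact is quasi-compact}, so the only genuinely new ingredient here is the refinement step ensuring the chosen opens $V_{ij}$ are quasi-compact, which is what upgrades the finite \emph{subcover} produced there into the \emph{quasi-compact open with image $W$} demanded by the m-fpqc condition.
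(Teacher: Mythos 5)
Your proof is correct and follows the paper's intended route: the paper states the corollary as an immediate consequence of Proposition \ref{prop: quasi-compact is quasi-compact}, and your argument---restricting the cover to a quasi-compact open $W$, refining by quasi-compact opens, and extracting a finite universally surjective subfamily via the valuative-space/quasi-compactness argument---is exactly the verification the paper leaves implicit. The refinement step you highlight (so that the finite subcover assembles into a quasi-compact open with image $W$) is indeed the only detail needed beyond the cited proposition, and you handle it correctly.
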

We are now ready to prove Theorem \ref{thm:sheaf_conditions}.
\begin{proof}[Proof of Theorem \ref{thm:sheaf_conditions}]
It is enough to show that any m-type covering in any of the big and small m-sites admits a refinement that is a covering in the Grothendieck topology generated by strict m-type covers and log blow-ups (see \cite[Section 2.3.5]{vistoli-2004-notes-on-grothendieck-topologies}).

Indeed, let $S$ be an fs log scheme and $\{U_i\rightarrow S\}_{i\in I}$ an m-type covering. By \cite[Theorem 5.4]{Toric-singularies}, we may replace $S$ with a log blow-up of $S$ that has a Zariski log structure. After passing to an open cover, we may further assume that $S$ is quasi-compact and admits a global chart. Proposition \ref{prop: quasi-compact is quasi-compact} now implies that there is a finite subcovering $\{U_{i'}\rightarrow S\}_{i'\in I'}$, and we may assume that the $U_{i'}$ are quasi-compact. Furthermore, according to \cite[Lemma 3.10]{NAKAYAMALOGETALECohomology2}, there is a log blow-up $S'\rightarrow S$ such that $\coprod_{i'\in I'}U_{i'}\times_{S}S'\rightarrow S'$ is exact and by Proposition \ref{m-type of Kummer is strict} therefore also strict. Hence, $\{U_{i'}\times_{S}S'\rightarrow U_{i'}\rightarrow S\}_{i'\in I'}$ is the desired refinement, which concludes the proof. 
\end{proof}
\begin{remark}
On the category $\mathsf{fsLogSch}/S$, the m-open (resp. m-\'etale, m-smooth, m-fppf) topology is equivalent to (i.e. has the same covering sieves as) the \emph{valuative Zariski (resp. \'etale, smooth, flat) topology} defined in \cite[Definition 2.23]{K-theory-of-log-schemes-I} and the \emph{dividing Zariski (resp. \'etale) topology} of the same type defined in \cite[Definition 3.1.5]{Triangulated-categories-of-logarithmic-motives-over-a-field}.  \end{remark}
\section{Subtleties regarding universal surjectivity}
In this section, we prove Theorem \ref{thm: shit be fucked} and show how to fix the problem that this theorem raises. 
\subsection{Proof of Theorem \ref{thm: shit be fucked} and consequences}\label{section where shit goes down}
As mentioned in Section \ref{subsection in introduction about universal surjectivity}, it suffices to show the following:
\begin{proposition}\label{proposition: the counterexample}
    The union of all log blow-ups of $\mathbb{A}^2_k$ restricted to the non-torus-fixed loci \begin{equation}\label{equation: da morphism is in da house}
        \bigsqcup_{\emptyset\neq I\subset \mathbb{N}^2}\left(\text{Bl}_I\mathbb{A}^2_k \right)_{\text{non-fix}}\rightarrow \mathbb{A}^2_k
    \end{equation}
    is weakly universally surjective. Moreover, weak universal surjectivity fails if one replaces the left-hand side by any finite subunion.
\end{proposition}
\begin{proof}
    Indeed, if one replaces the left-hand side of \eqref{equation: da morphism is in da house} by a union indexed by finitely many ideals $I_1,\ldots, I_n\subset \mathbb{N}^2$, then it follows that the base change along $\text{Bl}_I\mathbb{A}^2_k\rightarrow\mathbb{A}^2_k$ for $I = I_1\cdot\ldots\cdot I_n$ does not hit any of the torus-fixed points.
    
    For the first claim, it suffices to test surjectivity after base change along any log blow-up $\text{Bl}_I \mathbb{A}^2_k\rightarrow \mathbb{A}^2_k$. Indeed, it suffices to test surjectivity after base change along morphisms $g\colon T\rightarrow \mathbb{A}^2_k$ of fs log schemes with $T$ quasi-compact. By \cite[Lemma 3.10]{NAKAYAMALOGETALECohomology2}, there is a non-empty ideal $I\subset P$ such that the base change of $g$ along $\text{Bl}_I \mathbb{A}^2_k\rightarrow \mathbb{A}^2_k$ is exact. Since base change under exact morphisms preserves surjectivity \cite[(2.2.2)]{Log-etale-cohomology-I}, we may therefore assume that $g$ is a log blow-up along $I$ as claimed. 
    
    It is immediate that the image of the resulting base change contains $\left(\text{Bl}_I\mathbb{A}^2_k \right)_{\text{non-fix}}\subset\text{Bl}_I\mathbb{A}^2_k$. Now let $x_0\in \text{Bl}_I\mathbb{A}^2_k$ be any torus-fixed point. Now note that $x_0$ admits a non-torus-fixed preimage along the toric (hence log) blow-up $\text{Bl}_{x_0}\text{Bl}_I\mathbb{A}^2_k\rightarrow \text{Bl}_I\mathbb{A}^2_k$. Since the composite $\text{Bl}_{x_0}\text{Bl}_I\mathbb{A}^2_k\rightarrow \text{Bl}_I\mathbb{A}^2_k\rightarrow \mathbb{A}^2_k$ is by \cite[Corollary 4.11]{Toric-singularies} a log blow-up $\text{Bl}_J\mathbb{A}^2_k\rightarrow\mathbb{A}^2_k$ for some ideal $J\subset \mathbb{N}^2$, it follows that $x_0$ is in the image of the base change as desired.
\end{proof}
\begin{remark}\label{remark: topology not generated}
    In particular, contrary to \cite[Proposition 3.9]{NAKAYAMALOGETALECohomology2}, the log \'etale topology defined by using weakly universally surjective covers is not generated by log blow-ups and Kummer \'etale covers. Indeed, using Proposition \ref{prop: quasi-compact is quasi-compact}, Proposition \ref{prop: universally surjective examples} and the proof of \cite[Proposition 3.9]{NAKAYAMALOGETALECohomology2}, we see that Kummer \'etale covers and log blow-ups generate the log \'etale topology defined using universally surjective covers, which does not agree with the weak version as $\mathbb{A}^2_k$ is quasi-compact in one, but not the other.
\end{remark}

\subsection{Basic properties of universal surjectivity}
In this section, we collect several facts about universal surjectivity in the sense of Definition \ref{definition: universally surjective}.

\begin{lemma}\label{lemma: universal surjectivity def}
    Let $f\colon X\rightarrow Y$ be a morphism of fs log schemes. Then the following are equivalent:
    \begin{enumerate}
        \item\label{item: integral base change} $f$ is universally surjective.
        \item\label{item: saturated base change} For any saturated log scheme $T$ and any morphism $T\rightarrow Y$, the pullback $f_T\colon X\times_Y T\rightarrow T$ in the category of saturated log schemes is surjective.
        \item\label{item: log point base change} For any valuative log scheme $T$ with underlying scheme $\underline{T} = \text{Spec}(k)$ for $k$ an algebraically closed field, the base change $X\times_Y T$ in the category of integral (or saturated) log schemes is non-empty.
        \item\label{item: fval surjective}
        The morphism $f^{\text{val}}\colon X^{\text{val}}\rightarrow Y^{\text{val}}$ of Remark \ref{you can always blow up} is surjective.
    \end{enumerate}
\end{lemma}
\begin{remark}
    \begin{enumerate}
        \item By \cite[Lemma 3.1.9]{Log-geometry-beyond-fs} all the above fiber products exist. This is because $Y$ is an fs log scheme and thus any morphism from an integral log scheme to $Y$ admits charts, which one can use to construct the fiber product.
        \item See \cite[Proposition 2.2.4.2]{Molcho_Wise_2022} for a very similar statement.
    \end{enumerate}
\end{remark}
\begin{proof}
    For any integral log scheme $X$, the canonical morphism $X^{\text{sat}}\rightarrow X$ from its saturation is surjective (see e.g. \cite[Proposition 3.1.10]{Log-geometry-beyond-fs}). Hence, \eqref{item: integral base change} and \eqref{item: saturated base change} are equivalent; the integral and saturated versions of \eqref{item: log point base change} are also equivalent. Hence it suffices to show that \eqref{item: integral base change}, \eqref{item: log point base change} and \eqref{item: fval surjective} are equivalent. Moreover, it follows from Theorem \ref{thm: valuative space}\eqref{item: pull-back valuative} and Remark \ref{you can always blow up} that we may replace $X$ and $Y$ by strict \'etale covers and hence assume that they are Zariski. 

    Now note that $f$ is universally surjective if and only if for any morphism $T\rightarrow Y$ from an integral log point, we can find a commutative square
    \begin{equation}\label{diagram: fuck you}
    \begin{tikzcd}
        \exists \ T'\rar[dashed]\dar[dashed] &T\dar\\
        X \rar &Y,
    \end{tikzcd}
    \end{equation}
    where $T'$ is another log point. By Theorem \ref{thm: valuative space}, we have a surjection $T^{\text{val}}\twoheadrightarrow T$, hence we may assume that $T$ is valuative, which shows that \eqref{item: integral base change} and \eqref{item: log point base change} are equivalent. For the same reason, we can also ensure that $T'$ is valuative if it exists. By the universal property of the valuative space, we therefore see that \eqref{item: integral base change} implies \eqref{item: fval surjective}. More precisely, \eqref{item: fval surjective} is equivalent to \eqref{diagram: fuck you} being completeable if $T$ is a point of $Y^{\text{val}}$. To show that \eqref{item: fval surjective} implies \eqref{item: integral base change}, let $T_0\rightarrow Y$ be a morphism from a general valuative log point and $T_0\rightarrow T\rightarrow Y$ a factorization, where the last arrow comes from a point of $Y^{\text{val}}$ and thus fits into a square as in \eqref{diagram: fuck you}. We thus get a morphism $T'\times_T T_0\rightarrow X\times_Y T_0$, where we take fiber products in the category of integral log schemes. Indeed, note that $T'\times_T T_0$ exists as valuative log points admit canonical charts coming from their global sections, which enables one to use the standard construction of the fiber product. However, since $T$ is valuative, it follows from \cite[Proposition I.4.6.3(5)]{lecturesonlogarithmicalgebraicgeometry} that $T'\rightarrow T$ is integral and hence $\underline{T'\times_T T_0} = \underline{T'}\times_{\underline{T}}\underline{T_0}$ is non-empty, which implies that $X\times_Y T_0$ is also non-empty.
\end{proof}
We will now close this section by showing that universal surjectivity still admits the expected examples. 
\begin{proposition}\label{prop: universally surjective examples}
    If $f\colon X\rightarrow Y$ is a morphism of fs log schemes satisfying one of the following conditions:
    \begin{enumerate}
        \item\label{item: exact and surj is univ sur} exact and surjective,
        \item\label{item: qc and univ sur} quasi-compact and universally surjective in fs log schemes,
        \item\label{item: log blow is univ surj} a log blow-up,
    \end{enumerate}
    then $f$ is also universally surjective. Moreover, universally surjective morphisms are stable under composition and base change in fs log schemes.
\end{proposition}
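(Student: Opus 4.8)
The plan is to verify, in every case, universal surjectivity through the equivalent characterizations of Lemma \ref{lemma: universal surjectivity def}: I would use condition \eqref{item: integral base change} (surjectivity of every base change along an integral log scheme, formed in integral log schemes) for the two concrete classes and for the final stability statements, and the point-wise condition \eqref{item: log point base change} (non-emptiness of base changes along valuative log points with algebraically closed residue field) for the delicate case \eqref{item: qc and univ sur}, which I expect to be the main obstacle. The final stability assertions will then be immediate.

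For \eqref{item: exact and surj is univ sur}, I would note that exactness is stable under base change, so each $f_T$ is again exact; since base change under exact morphisms preserves surjectivity by \cite[(2.2.2)]{Log-etale-cohomology-I} and $f$ is surjective, every $f_T$ is surjective, giving \eqref{item: integral base change}. For \eqref{item: log blow is univ surj}, the key input is that the formation of a log blow-up commutes with base change in integral log schemes: writing $X=\mathrm{Bl}_{\mathcal I}(Y)$ for a coherent ideal $\mathcal I\subset \mathcal M_Y$, the base change $f_T$ is the log blow-up $\mathrm{Bl}_{\mathcal I\mathcal M_T}(T)\rightarrow T$ (cf.\ the construction recalled in \cite[Section 2.1]{NAKAYAMALOGETALECohomology2} and \cite[III.2.6]{lecturesonlogarithmicalgebraicgeometry}). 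As monoidal blow-ups are always surjective onto their base, this yields surjectivity of $f_T$ and hence \eqref{item: integral base change}. Note that \eqref{item: integral base change} cannot be obtained here by the exactness argument of the previous case, since a non-trivial log blow-up is never exact.

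The heart of the proof is \eqref{item: qc and univ sur}, where I would instead verify condition \eqref{item: log point base change}: for a valuative log point $T$ with $\underline T=\mathrm{Spec}(k)$, $k$ algebraically closed, and a morphism $T\rightarrow Y$, one must show $X\times_Y T\neq\emptyset$. The idea is to present the (possibly non-fs) valuative monoid $V=\overline{\mathcal M}_T$ as the filtered union of the saturations $W_\lambda$ of its finitely generated submonoids; each $W_\lambda$ is then finitely generated, integral and saturated, hence fs, and $V=\bigcup_\lambda W_\lambda$. Setting $T_\lambda=\mathrm{Spec}(W_\lambda\rightarrow k)$ and restricting to those $\lambda$ large enough that the finitely many chart generators of $Y$ already land in $W_\lambda$ (so that $T\rightarrow Y$ factors through $T_\lambda$), I obtain a cofiltered system of fs log points $T_\lambda\rightarrow Y$ with $T=\lim_\lambda T_\lambda$. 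For each $\lambda$, universal surjectivity in fs log schemes gives $X\times_Y^{\mathrm{fs}}T_\lambda\neq\emptyset$, and since saturation is surjective \cite[Proposition 3.1.10]{Log-geometry-beyond-fs} also $X\times_Y^{\mathrm{int}}T_\lambda\neq\emptyset$. The quasi-compactness of $f$ makes each $X\times_Y^{\mathrm{int}}T_\lambda$ a non-empty quasi-compact spectral space with affine, hence spectral, transition maps, so that the cofiltered limit $X\times_Y T=\lim_\lambda X\times_Y^{\mathrm{int}}T_\lambda$ is non-empty. The step I expect to require the most care is the identification of the underlying space of $X\times_Y T$ with the limit of the spaces $X\times_Y^{\mathrm{int}}T_\lambda$, together with the non-emptiness of cofiltered limits of non-empty quasi-compact spectral spaces; this is exactly where quasi-compactness of $f$ is indispensable, matching the failure without it that is implicit in Proposition \ref{prop: can cover by valuatives}.

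Finally, for stability under composition and base change I would check \eqref{item: integral base change} directly. Given universally surjective $f\colon X\rightarrow Y$ and $g\colon Y\rightarrow Z$ and an integral log scheme $T\rightarrow Z$, the object $Y\times_Z T$ is again an integral log scheme, so the surjectivity of $Y\times_Z T\rightarrow T$ (from $g$) composed with the surjectivity of $X\times_Y(Y\times_Z T)=X\times_Z T\rightarrow Y\times_Z T$ (from $f$) yields surjectivity of $(g\circ f)_T$. For base change along $Y'\rightarrow Y$ and integral $T\rightarrow Y'$ one has $(X\times_Y Y')\times_{Y'}T=X\times_Y T$ with $T$ regarded as an integral log scheme over $Y$, so surjectivity of $f_T$ gives surjectivity of the base-changed map, as desired.
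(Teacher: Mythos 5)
Your reductions for cases \eqref{item: exact and surj is univ sur} and \eqref{item: log blow is univ surj} contain a genuine gap: in both you transfer results proven only for fine/fs log schemes to base changes along arbitrary \emph{integral} log schemes, and that transfer is exactly what this proposition is supposed to establish. For \eqref{item: exact and surj is univ sur}, the cited statement \cite[(2.2.2)]{Log-etale-cohomology-I} concerns base change inside the fs (fine) category, whereas condition \eqref{item: integral base change} of Lemma \ref{lemma: universal surjectivity def} requires surjectivity after base change along integral log schemes $T$ whose monoid stalks are typically not finitely generated (the relevant test objects are valuative log points of rank $\geq 2$, which are never fs); the citation therefore does not apply as stated. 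The paper instead reduces to valuative log points via condition \eqref{item: log point base change}, observes that such a point carries a canonical chart $Q=\Gamma(T,\mathcal{M}_T)$ which is exact at the unique point and compatible with any chart of $Y$, and then re-runs the proof of \cite[Proposition III.2.2.3]{lecturesonlogarithmicalgebraicgeometry} in this larger generality -- this re-running is the actual content, not a formality. For \eqref{item: log blow is univ surj}, both of your inputs fail in the same way: the identity $f_T=\mathrm{Bl}_{\mathcal{I}\mathcal{M}_T}(T)$ is proven in the literature only for fs base change (and is not literally correct in the integral category, since the fs blow-up $X=\mathrm{Bl}_{\mathcal{I}}(Y)$ involves a saturation), and "monoidal blow-ups are always surjective onto their base" is not a citable fact over non-fine bases -- its usual proof is itself a valuative-monoid argument. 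The paper's route avoids all of this: it checks condition \eqref{item: log point base change} directly, using that any morphism from a valuative log point lifts along a log blow-up (\cite[Lemma 2.5.5]{Log-geometry-beyond-fs}), because a coherent ideal pulled back to a valuative monoid becomes principal; the lift gives a section of $X\times_Y T\rightarrow T$, whence non-emptiness.

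Your treatment of case \eqref{item: qc and univ sur}, by contrast, is a genuinely different and essentially viable argument. The paper reduces \eqref{item: qc and univ sur} to the other two cases by exactification: after localizing, \cite[Lemma 3.10]{NAKAYAMALOGETALECohomology2} provides a log blow-up $\widetilde{Y}\rightarrow Y$ such that $X\times_Y\widetilde{Y}\rightarrow\widetilde{Y}$ is exact and surjective, and then \eqref{item: log blow is univ surj} and \eqref{item: exact and surj is univ sur} finish the job -- a two-line deduction once those cases are in place. Your approximation of the valuative log point by fs log points $T_\lambda$, combined with a cofiltered limit argument, bypasses exactification entirely and uses quasi-compactness exactly where it is needed; the only repairs required are minor: quasi-compact schemes need not be quasi-separated, so you should not invoke spectrality, but since all $X\times^{\mathrm{int}}_Y T_\lambda$ are affine over the quasi-compact scheme $\underline{X}\times_{\underline{Y}}\operatorname{Spec}(k)$, you can instead cite the non-emptiness of cofiltered limits of non-empty quasi-compact schemes with affine transition morphisms, together with a verification that integralization commutes with the filtered colimit identifying $X\times^{\mathrm{int}}_Y T$ with the limit. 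Your verification of the stability statements matches the paper, which dismisses them as immediate from the definition.
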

\begin{proof}
    Stability under composition and base change follow directly from the definition. By Lemma \ref{lemma: universal surjectivity def}\eqref{item: log point base change} we can check the universal surjectivity of log blow-ups on valuative log points. Indeed, it follows from \cite[Lemma 2.5.5]{Log-geometry-beyond-fs} that any morphism from a valuative log point lifts along a log blow-up. This gives \eqref{item: log blow is univ surj}. For \eqref{item: qc and univ sur}, we use that universal surjectivity is strict \'etale local in $Y$, which allows us to assume that $Y$, and hence $X$, are quasi-compact and $Y$ admits a global chart. By \cite[Lemma 3.10]{NAKAYAMALOGETALECohomology2}, there is therefore a log blow-up $\widetilde{Y}\rightarrow Y$ so that $X\times_Y \widetilde{Y}\rightarrow \widetilde{Y}$ is exact and surjective. Since log blow-ups are universally surjective, this reduces \eqref{item: qc and univ sur} to \eqref{item: exact and surj is univ sur}. For \eqref{item: exact and surj is univ sur} we note that any log point $T$ in Lemma \ref{lemma: universal surjectivity def}\eqref{item: log point base change} has a canonical chart given by the global sections $Q=\Gamma(T,\mathcal{M}_T)$, which is exact at the unique point of $T$. Moreover, for any morphism $g\colon T\rightarrow Y$, every chart $P$ of $Y$ canonically fits into a chart $P\rightarrow Q$ for $g$. With these preparations in place, the proof of \cite[Proposition III.2.2.3]{lecturesonlogarithmicalgebraicgeometry} still works in our more general setting.
\end{proof}
\begin{remark}
Following \cite[Definition 4.1]{log-geometry-and-algebraic-stacks}, there is also a notion of $\mathcal{L}og$ surjectivity. Specifically, a morphism $f\colon X\rightarrow Y$ is \emph{$\mathcal{L}og$ surjective} if any base change of the (representable) morphism $\mathcal{L}og(f)\colon \mathcal{L}og_X\rightarrow \mathcal{L}og_Y$ along a scheme is surjective. Here $\mathcal{L}og_X$ denotes the stack of log structures of $X$. Indeed, one can show that $\mathcal{L}og$ surjectivity implies universal surjectivity. However, the converse fails.

For example, let $X=Y=\text{Spec}(\mathbb{N}\xrightarrow{0} \mathbb{C})$, and let $f\colon X\rightarrow Y$ be the exact morphism induced by $\mathbb{N}\xrightarrow{2\cdot}\mathbb{N}$. By Proposition \ref{prop: universally surjective examples}\eqref{item: exact and surj is univ sur}, $f$ is universally surjective. If $\mathcal{L}og(f)$ were surjective, there would exist a non-empty scheme $\underline{T}$ fitting into the diagram
\begin{center}
\begin{tikzcd}
    \underline{T}\rar\dar &\underline{Y}\dar\\
    \mathcal{L}og_X \rar["\mathcal{L}og(f)"] &\mathcal{L}og_Y,
\end{tikzcd}
\end{center}
where $\underline{Y}\rightarrow \mathcal{L}og_Y$ comes from the log structure on $Y$.
This is equivalent to the existence of a non-empty fs log scheme $T$ with a morphism $g\colon T\rightarrow X$ so that $f\circ g$ is strict. In particular, for any point $t\in T$, this implies that $\mathbb{N}\xrightarrow{2\cdot}\mathbb{N}\xrightarrow{g^*} \overline{\mathcal{M}}_{T,\overline{t}}$ is an isomorphism, which is impossible. Thus, $f$ is not $\mathcal{L}og$ surjective.
\end{remark}
\section{Proof of Theorem \ref{lemma-log-blow-up-integral}}
In this section we prove Theorem \ref{lemma-log-blow-up-integral}, which is needed in order to prove Proposition \ref{proposition: limit of topoi}.
In order to produce the desired log blow-up, we first need to introduce some new terminology.
\begin{definition} 
\begin{enumerate}
    \item A morphism $\varphi\colon P\rightarrow Q$ of sfs (sharp, fine and saturated) monoids is a \emph{coface map} if it induces an isomorphism $\overline{(\varphi^{-1}\{1\})^{-1}P}\xrightarrow{\cong} Q$. Recall here that $\overline{(\ldots)}$ denotes the quotient of a monoid by its subgroup of units. We also call $\varphi$ \emph{proper} if $\varphi$ is not an isomorphism.
    \item We denote by $\cC_{\text{sfs}}$ the category of sfs monoids with coface maps between them.
    \item A (full) subcategory $\cS\subset \cC_{\text{sfs}}$ is \emph{coface closed} if it contains all coface maps $P\rightarrow Q$ for $P\in \cS$.
    \item Let $\cS\subset\cC_{\text{sfs}}$ be a coface closed subcategory. We call a collection $(I_P)_{P\in\cS}$, where each $I_P\subset P$ is a non-empty ideal if for any coface map $\varphi\colon P\rightarrow Q$ in $\cS$ we have $\varphi(I_P)=I_Q$.
\end{enumerate}
\end{definition}
One can interpret compatible collections of ideals as a universal choice of log ideal on every algebraic stack with log structure. More precisely:
\begin{lemma}\label{lemma: compatible collections are universal ideals}
    Let $(I_P)_{P\in\cC_{\text{sfs}}}$ be a compatible collection of ideals on $\cC_{\text{sfs}}$. Then there is a unique assignment $(\cI_\cX)_{\cX}$ of coherent, nowhere empty log ideals $\cI_\cX$ on all algebraic stacks $\cX$ with fs log structure so that 
    \begin{enumerate}
        \item\label{item: strict pullback} For any strict morphism $f\colon \cX\rightarrow \cY$ we have $f^*\cI_\cY = \cI_\cX$.
        \item\label{item: if global chart then global chart} If $\cX$ admits a global chart $P\rightarrow\Gamma(\cX,\cM_\cX)$, then $(P\rightarrow \overline{P})^{-1}(I_{\overline{P}})$ is a chart for $\cI_\cX$.
    \end{enumerate}
\end{lemma}
\begin{proof}
    Properties \eqref{item: strict pullback} and \eqref{item: if global chart then global chart} determine the $\cI_\cX$ if they exist. To prove existence, we need to check that the ideals in \eqref{item: if global chart then global chart} glue, that is, $(Q\rightarrow \overline{Q})^{-1}I_{\overline{Q}}$ and $(Q'\rightarrow \overline{Q'})^{-1}I_{\overline{Q'}}$ induce the same ideal on any log scheme $X$ admitting both $Q$ and $Q'$ as global charts. However, this follows from the compatibility of $(I_P)_{P\in\cC_{\text{sfs}}}$ and the fact that the natural maps $\overline{Q}\rightarrow\overline{\cM}_{X,\overline{x}}\leftarrow \overline{Q'}$ are coface maps for any geometric point $\overline{x}$ of $X$.
\end{proof}
We will need the following auxiliary lemma.
\begin{lemma}\label{lemma: the ideal quotient is our lord and saviour}
    Let $P$ be an sfs monoid and $I,J\subset P$ ideals. Then the ideal quotient $(I : J) \coloneqq \{p\in P\mid p\cdot J\subset I\} $ has the following two properties:
    \begin{enumerate}
        \item\label{item: first piece of blubber} If $I=J\cdot K$ for some ideal $K\subset P$, then we also have $I=J\cdot (I:J)$.
        \item\label{item: second piece of blubber} For any coface map $\varphi\colon P\rightarrow Q$ we have $\varphi(I:J) = (\varphi(I):\varphi(J))$.
    \end{enumerate}
\end{lemma}
\begin{proof}
    Claim \eqref{item: first piece of blubber} follows from the definitions. To prove \eqref{item: second piece of blubber}, it suffices to show $(\varphi(I):\varphi(J))\subset \varphi(I:J)$. For this, let $q\in (\varphi(I):\varphi(J))$ and choose generators $j_1,\ldots, j_n$ of $J$. Since $\varphi$ is surjective, we have $\varphi(p)=q$ for some $p\in P$ and thus $\varphi(p\cdot j_l)\in \varphi(I)$ for all $l$. As $\varphi$ is a coface map, there exists $p'\in \varphi^{-1}\{1\}$ so that $p'\cdot p\cdot j_l\in I$ for all $l$ and thus $q = \varphi(p) = \varphi(p'\cdot p)\in \varphi(I:J)$ as desired. 
\end{proof}
The following lemma is the technical core of this section.
\begin{lemma}\label{lemma: extend bitchy the bitch boi}
    \begin{enumerate}
        \item\label{item: can extend compatible collections} If $\cS\subset \cC_{\text{sfs}}$ is a coface closed subcategory, then any compatible collection of ideals on $\cS$ can be extended to one on $\cC_{\text{sfs}}$.
        \item\label{item: compactible collections dominate finite collections} Let $P_1,\ldots,P_n$ be sfs monoids and $I_1,\ldots,I_n$ non-empty ideals in $P_i$'s, respectively. Then there is a compatible collection of ideals $(I'_P)_{P\in \cC_{\text{sfs}}}$ so that for every $i=1,\ldots,n$ we have $I'_{P_i} = I_{i}\cdot J_i$ for some ideal $J_i\subset P_i$.
    \end{enumerate}
\end{lemma}
\begin{proof}
    By induction on the dimension, we can prove \eqref{item: can extend compatible collections} by extending the collection one monoid at a time. More precisely, let $P$ be an sfs monoid and $(I_Q)_{Q\in \cS}$ a compatible system on the coface closed subcategory $\cS\subset\cC_{\text{sfs}}$ spanned by all of the proper cofaces of $P$. 
    We then claim that the ideal $I_P = \bigcap_{\pi\colon P\rightarrow Q}\pi^{-1}(I_Q)$, where the intersection runs over all proper coface maps, is automorphism-invariant and $\pi(I_P)=I_Q$ for any proper coface map $\pi\colon P\rightarrow Q$. Indeed, the only nontrivial part is to verify that we have $\pi(I_P) \supset I_Q$. For this, let $q\in I_Q$ and let $p\in P$ be any preimage of $q$ under $\pi$. For any possibly different coface map $\pi'\colon P\rightarrow Q'$ we have the common coface $\pi''\colon Q'\rightarrow \overline{\pi'(\pi^{-1}\{1\})^{-1} Q'}\eqqcolon Q''$ of $Q'$ and $Q''$ and since $\pi''\circ \pi'$ factors over $\pi$, we have by compatibility $\pi''(\pi'(p))\in I_{Q''} = \pi''(I_{Q'})$. Thus, we can find a $g_{\pi'}\in \pi^{-1}\{1\}$ so that $\pi'(p \cdot g_{\pi'})\in I_{Q'}$. It thus follows that $p' \coloneqq p\cdot\prod_{\pi'\colon P\rightarrow Q'} g_{Q'}$ is another preimage of $q$ under $\pi$ and for any proper coface map $\pi'\colon P\rightarrow Q'$ we have $\pi'(p') = \pi'(p\cdot g_{\pi'})\cdot \pi'(\prod_{\pi'\neq \pi''\colon P\rightarrow Q''} g_{\pi''})\in I_{Q'}$ since $I_{Q'}$ is an ideal. Thus we have $p'\in I_P$ as desired.
    
    We show \eqref{item: compactible collections dominate finite collections} by induction on $d = \text{max}_{i=1,\ldots,n} \text{dim}(P_i)$. We may assume that the $P_i$ are pairwise non-isomorphic and the $I_i$ are automorphism-invariant, since otherwise one can replace the ideals by appropriate products. By the induction hypothesis, there is a compatible system $(\widetilde{I}_P)_{P\in\cC_{\text{sfs}}}$ so that $\varphi(I_i)$ divides $\widetilde{I}_Q$ for any coface map $\varphi\colon P_i\rightarrow Q$ with $\text{dim}(Q)<d$. Letting $\cS\subset\cC_{\text{sfs}}$ be the coface closed subcategory generated by the $P_i$, we consider a new collection $(\widetilde{I}'_P)_{P\in \cS}$ of ideals defined by $\widetilde{I}'_P = \widetilde{I}_P$ if $\dim(P)<d$ and $\widetilde{I}'_{P_i} = I_i\cdot (\widetilde{I}_{P_i}:I_i)$ for $i=1,\ldots,n$ with $\dim(P_i)=d$. It follows from Lemma \ref{lemma: the ideal quotient is our lord and saviour} that the collection is compatible on $\cS$ and hence \eqref{item: can extend compatible collections} gives us the desired extension to $\cC_{\text{sfs}}$.
\end{proof}
We now combine Lemma \ref{lemma: compatible collections are universal ideals} and Lemma \ref{lemma: extend bitchy the bitch boi} to show that any local log ideal is dominated by a global log ideal. This will be our main tool in the proof of Theorem \ref{lemma-log-blow-up-integral}.
\begin{lemma}\label{lemma: can extend ideals}
    Let $f\colon \cX\rightarrow\cY$ be a strict morphism of algebraic stacks with fs log structures so that $\cX$ is quasi-compact and $\cI\subset \cM_{\cX}$ is a coherent nowhere empty log ideal. Then there is a coherent nowhere empty log ideal $\cJ\subset\cM_\cY$ so that $\cI$ locally divides $f^*\cJ$. In particular, we can factor $\text{Bl}_{f^*\cJ}\cX=\text{Bl}_\cJ\cY\times_\cY \cX\rightarrow \text{Bl}_\cI\cX\rightarrow \cX$. 
\end{lemma}
\begin{proof}
    Since $\cX$ is quasi-compact, we can cover it by finitely many charts for $\cI$. Lemma \ref{lemma: compatible collections are universal ideals} and Lemma \ref{lemma: extend bitchy the bitch boi} \eqref{item: compactible collections dominate finite collections} now imply the claim.
\end{proof}
\begin{remark}\label{remark: ideal might as well be globally charted}
    If, in addition, $\cX$ admits a global chart, then it follows from the above proof and Lemma \ref{lemma: compatible collections are universal ideals} that $\cJ$ does too. In particular, it follows from the case $f = \text{Id}_\cX$ that any coherent, nowhere empty log ideal on $\cX$ is dominated by a log ideal admitting a global chart.
\end{remark}
The following is a strengthening of \cite[Theorem 5.10]{Toric-singularies}, where the same statement is proved for log regular log schemes.
\begin{proposition}\label{proposition: free log structure}
    For any algebraic stack $\cX$ with fs log structure, there is a coherent nowhere empty log ideal $\cI\subset\cM_\cX$ so that $\text{Bl}_\cI \cX$ has free log structure. Moreover, we can choose $\cI$ so that for any strict morphism $f\colon \cY\rightarrow\cX$, where $\cY$ has free log structure, we have $f^*\cI = \cM_\cY$.
\end{proposition}
\begin{proof}
    We will now produce a compatible system of saturated ideals $(I_P)_{P\in \cC_{\text{sfs}}}$ so that $\text{Bl}_{\mathbb{Z}[I]} \mathbb{A}_P$ has free log structure for every sfs monoid $P$ and $I_P=P$ if $P$ is free. Lemma \ref{lemma: compatible collections are universal ideals} will then do the rest. Indeed, the collection of trivial (hence saturated) ideals on the coface closed subcategory of free monoids is compatible. To extend this collection to $\cC_{\text{sfs}}$, as in the proof of Lemma \ref{lemma: extend bitchy the bitch boi} \eqref{item: can extend compatible collections}, it suffices to show that for any sfs monoid $P$ and any compatible collection of saturated ideals $(I_Q)_{Q\in \cS}$ with free log blow-ups on the coface closed subcategory $\cS\subset\cC_{\text{sfs}}$ spanned by all of the proper cofaces of $P$, there is an automorphism-invariant saturated ideal $I\subset P$ with free log blow-up so that $\varphi(I) = I_Q$ for any proper coface map $\varphi\colon P\rightarrow Q$. However, this is precisely what is done in the proof of \cite[Theorem 5.10]{Toric-singularies} using the correspondence between saturated ideals and piecewise linear functions on the dual cone.
\end{proof}

We close this section by using the above machinery to prove Theorem \ref{lemma-log-blow-up-integral}.
\begin{proof}[Proof of Theorem \ref{lemma-log-blow-up-integral}]
    By \cite[Proposition I.4.7.5]{lecturesonlogarithmicalgebraicgeometry}, a morphism is integral if it is $\mathbb{Q}$-integral and its target has free log structure. By Proposition \ref{proposition: free log structure}, it therefore suffices to make $f$ $\mathbb{Q}$-integral after a log blow-up. First note that we can cover $f$ by squares of the shape
    \[
    \begin{tikzcd}
        U\dar \rar{\varphi} &V\dar \\
        \cX \rar["f"] &\cY
    \end{tikzcd}
    \]
    where $U$ and $V$ are affine and the vertical maps are strict and smooth. Moreover, we may assume that there is a coherent nowhere empty log ideal $\cI\subset \cM_V$ so that $\text{Bl}_{\varphi^*\cI}U\rightarrow \text{Bl}_\cI V $ is $\mathbb{Q}$-integral (see \cite[Theorem III.2.6.7]{lecturesonlogarithmicalgebraicgeometry} and \cite[Theorem 1.1]{kato2022integral}). Since $\cX$ is quasi-compact, we can find a finite collection $\{\varphi_i\colon U_i\rightarrow V_i\}_{i=1}^n$ of such morphisms so that $\coprod_i U_i\rightarrow \cX$ is surjective. Recall that $\mathbb{Q}$-integrality (in contrast to integrality) is stable under fs base change, hence, by Lemma \ref{lemma: can extend ideals}, we may assume that the ideals on the $V_i$ extend to $\cY$, where we can take their product, which is the desired ideal.
\end{proof}
\section{m-open topoi as limits}\label{lim of topii}
\begin{proposition}\label{proposition: limit of topoi}
Let $X$ be an fs log scheme. Then there is an equivalence of topoi 
\[\text{Shv}(X_{\text{mop}})=\underset{\widetilde{X}\rightarrow X}{\mathrm{lim}}\text{Shv}(\widetilde{X}{}_{\text{Zar}}),\]
where the limit runs over all log blow-ups of $X$.
\end{proposition}
\begin{proof}
This follows from Corollary \ref{corollary: m can be made non-m by log blowy} and an argument analogous to \cite[Proposition 5.6 (1)]{NAKAYAMALOGETALECohomology2}.
\end{proof}
\begin{definition}\label{definition of log dimension 2}
For any fs log scheme $X$, we define the \emph{log dimension}
$$\text{log-dim}(X)\coloneqq \sup_{\widetilde{X}\rightarrow X} \dim(\widetilde{X}),$$ 
where the supremum runs over all log blow-ups $\widetilde{X}\rightarrow X$.
\end{definition}
In Corollary \ref{corollary: both definitions of log dimension agree}, we will see that this is equivalent to Definition \ref{definition of log dimension 1}.
Using \cite[ Corollary 3.11, Theorem 3.12, and Theorem 3.17]{clausen2021hyperdescent}, we can immediately deduce the following generalization of Corollary \ref{cor: cohomology vanishing}:
\begin{proposition}
    For any fs log scheme $X$, the m-open $\infty$-topos $\text{Sh}(X_{\text{m-op}}; \text{An})$ of sheaves valued in anima has homotopy dimension $\leq \text{log-dim}(X)$.
\end{proposition}
To make the estimate useful, we now prove Theorem \ref{Theorem on log dimensions}.
\begin{proof}[Proof of Theorem \ref{Theorem on log dimensions}]
    Throughout this proof, we will write $r_x = \max\left(\text{rank} (\overline{\mathcal{M}}_{X,\overline{x}}^{gp})-1,0\right)$ for any $x\in X$.
    By \cite[0A4H]{stacks-project} we have $\text{dim}(X')=\dim(X)$ for any strict, surjective and \'etale morphism $f\colon X'\twoheadrightarrow X$. Using Lemma \ref{lemma: can extend ideals} and Definition \ref{definition of log dimension 2}, this further implies $\text{log-dim}(X')=\text{log-dim}(X)$; Moreover for any $x\in X$ we have $f^{-1}\overline{\{x\}} = \overline{f^{-1}\{x\}} = \bigcup_{f(y)=x}\overline{\{y\}}$, where the first equality uses that $f$ is open and the second holds since the union on the right is locally finite. This implies $\dim \overline{\{x\}} = \dim f^{-1}\overline{\{x\}} = \sup_{f(y) = x}\dim\overline{\{y\}}$ and since $r_y = r_x$ for any $y\in X'$ with $f(y) = x$, it follows that both sides of \eqref{equation: log-dim if loc noeth} have the same value for $X$ and $X'$. Hence Theorem \ref{Theorem on log dimensions} holds if and only if it holds for a strict \'etale cover and we may assume that $X$ has Zariski log structure. 

    Now let $x\in X$, and choose an open neighborhood $U\subset X$ of $x$ admitting a chart $P\rightarrow \Gamma(U,\cM_X|_U)$ that is neat at $x$. By Lemma \ref{lemma: can extend ideals}, Theorem \ref{Theorem on log dimensions} follows if we show that $\sup_{\emptyset\neq I\subset P} \dim (\text{Bl}_I U \cap \overline{\{x\}}) = \dim\overline{\{x\}} + r_x$, where the supremum runs over all non-empty ideals of $P$.  
    Indeed, let $I\subset P$ be a non-empty ideal and note that any fiber of $\text{Bl}_{\mathbb{Z}[I]}\mathbb{A}_P\rightarrow \mathbb{A}_P$ is also the fiber of $\text{Bl}_{k[I]}\text{Spec}(k[P])\rightarrow \text{Spec}(k[P])$ for some prime field $k$ and thus of dimension at most $\max\left(\dim k[P] - 1,0\right) = r_x$ (recall that $\dim k[P] = \text{rank}(P^{gp})$ by \cite[Proposition I.3.4.1]{lecturesonlogarithmicalgebraicgeometry}). Note also that this upper bound is an equality for fibers over the irrelevant set $Z_{\text{irr}} = V(I_{\text{irr}})\subset \mathbb{A}_P$ when $I = I_{\text{irr}} = P\setminus\{1\}$ is the irrelevant ideal. By \cite[02FY]{stacks-project}, the fibers of $\text{Bl}_I U\cap\overline{\{x\}}\rightarrow U\cap\overline{\{x\}}$ have the same dimension as those of $\text{Bl}_{\mathbb{Z}[I]}\mathbb{A}_P\rightarrow \mathbb{A}_P$ over the image of $U\cap\overline{\{x\}}$. By \cite[02FX, 0BAG]{stacks-project}, it therefore follows that $\dim (\text{Bl}_I U\cap\overline{\{x\}}) \leq \dim\overline{\{x\}} + r_x$ for all $I$. To show that this upper bound is sharp, we note that since $\overline{\{x\}}$ maps into $Z_{\text{irr}}$ and $\text{Bl}_{I_{\text{irr}}}\mathbb{A}_P\cap Z_{\text{irr}}\rightarrow Z_{\text{irr}}$ is flat of relative dimension $r_x$, it follows that $\text{Bl}_{I_{\text{irr}}} U\cap\overline{\{x\}}\rightarrow U\cap\overline{\{x\}}$ is also flat of relative dimension $r_x$ and we get $\dim (\text{Bl}_{I_{\text{irr}}} U\cap\overline{\{x\}}) = \dim\overline{\{x\}} + r_x$ by \cite[0AFE]{stacks-project}.
\end{proof}

\section{Sheaves on the valuative log space}\label{val log space section bla}
In this section, we prove the results claimed in Section \ref{section: intro valuative space}.
\subsection{Valuative log spaces}
We first need to define the so-called \emph{valuative log space} associated to an fs log scheme. This space is not a scheme, and its log structure is not quasi-coherent. We therefore begin with some generalities:
\begin{definition}
    A \emph{log locally ringed space} $X$ is a locally ringed space $\underline{X}$ equipped with a log structure $\mathcal{M}_X\rightarrow\mathcal{O}_X$ where $\mathcal{M}_X$ is a sheaf of integral monoids on $\underline{X}$. Morphisms of log locally ringed spaces are defined accordingly and we denote the resulting category by $\mathsf{LogLRS}$. We call $X$ \emph{valuative} if all stalks of $\mathcal{M}_X$ are valuative monoids.
\end{definition}
\begin{remark}\label{remark: limits in loglrs}
    \begin{enumerate}
        \item Any integral Zariski log scheme can be naturally regarded as a log locally ringed space.
        \item The category $\mathsf{LogLRS}$ has all cofiltered limits. Indeed, if $\{(X_i,\mathcal{O}_{X_i},\mathcal{M}_{X_i})\}_{i\in I}$ is a cofiltered system in $\mathsf{LogLRS}$, then the limit is given by
        \[
            \mathop{\mathrm{lim}}_{i\in I}  \, (X_i,\mathcal{O}_{X_i},\mathcal{M}_{X_i}) = (\underset{i\in I}{\mathrm{lim}}  \, X_i, \mathop{\mathrm{colim}}_{i\in I} \, \pi_i^{-1}\mathcal{O}_{X_i},  \underset{i\in I}{\mathrm{colim}} \, \pi_i^{-1}\mathcal{M}_{X_i}),
        \]
        where $\pi_i\colon \underset{j\in I}{\mathrm{lim}} \, X_j\rightarrow X_i$ is the $i$-th projection. Using \cite{LRSHasLimits}, one can show that $\mathsf{LogLRS}$ actually has arbitrary limits, but we will not need this fact.
    \end{enumerate}
\end{remark}
The following theorem is stated in \cite{kato_logarithmic_degeneration}, where its proof is omitted. We provide a proof here for the reader's convenience.
\begin{theorem}\cite[Theorem 1.3.1]{kato_logarithmic_degeneration}\label{thm: valuative space}
    For any integral Zariski log scheme $X$, there is a log locally ringed space $\pi\colon X^{\text{val}}\rightarrow X$ over $X$ with a valuative log structure, terminal among all valuative log locally ringed spaces over $X$. Moreover, we have:
    \begin{enumerate}
        \item\label{item: properties of valuative space} The morphism $\pi\colon X^{\text{val}}\rightarrow X$ is quasi-compact, quasi-separated, closed and surjective. Moreover, $X^{\text{val}}$ is locally spectral and if $X$ is quasi-compact and quasi-separated, then $X^{\text{val}}$ is spectral\footnote{Recall that a topological space $T$ is \emph{spectral} if it is sober, quasi-compact, quasi-separated and its quasi-compact opens form a base of its topology. A space that admits an open cover by spectral spaces is called locally spectral.}.
        \item\label{valuative space if chart} If $X$ has a global chart $P\rightarrow \Gamma(X,\mathcal{M}_X)$, then we have 
        \[X^{\text{val}} = \underset{I\subset P}{\mathrm{lim}}\,\text{Bl}_I X,\]
        where the limit ranges over the cofiltered poset of non-empty, finitely generated ideals $I\subset P$, ordered by $I\leq J$ whenever there is a finitely generated ideal $K\subset P$ so that $I=J\cdot K$.
        \item\label{valuative space is limit fs} If $X$ is fs, then
        \[
            X^{\text{val}} = \underset{\cI}{\mathrm{lim}}\,\text{Bl}_\cI X,
        \]
        where the limit runs over the cofiltered poset of coherent, nowhere empty log ideals $\cI\subset\cM_X$ with $\cI\leq \cJ$ if $\cJ$ locally divides $\cI$.
        \item\label{item: pull-back valuative} For any strict morphism of integral Zariski log schemes $f\colon X\rightarrow Y$, the following square is cartesian in $\mathsf{LogLRS}$
        \begin{equation}\label{equation: pullback valuative space}
            \begin{tikzcd}
                X^{\text{val}}\ar[dr, phantom, "\lrcorner", very near start]\dar["\pi_X"] \rar["f^{\text{val}}"] &Y^{\text{val}}\dar["\pi_Y"]\\
                X\rar["f"] &Y
            \end{tikzcd}
        \end{equation}
    \end{enumerate}
\end{theorem}
\begin{proof}
    For a strict morphism of integral log schemes $f\colon X\rightarrow Y$ such that $Y^{\text{val}}$ exists, it follows that the pullback $X\times_Y Y^{\text{val}} \rightarrow Y^{\text{val}}$ in $\mathsf{LRS}$ equipped with the log structure pulled back from $Y^{\text{val}}$ is valuative and satisfies the universal property of the valuative log space of $X$. To conclude the proof of \eqref{item: pull-back valuative}, it suffices to construct $X^{\text{val}}$ for any integral Zariski log scheme $X$. Indeed, it suffices to construct it for all members of an open cover of $X$ since the intersections will glue by uniqueness and \eqref{item: pull-back valuative}. As a result, we may assume that $X$ has a global chart $P\rightarrow \Gamma(X,\mathcal{M}_X)$, and it suffices to show \eqref{valuative space if chart}. 
    
    We now prove \eqref{valuative space if chart}. First, one can show that the universal property of the log blow-up also holds in $\mathsf{LogLRS}$; hence any valuative log locally ringed space mapping to $X$ admits a unique lifting to $X' ={\lim}_{I\subset P} \text{Bl}_I X$. It therefore suffices to show that $X'$ is valuative. To see that, let $x\in X'$ and $m\in \mathcal{M}_{X',x}^{gp}$. By Remark \ref{remark: limits in loglrs} we have 
    $\mathcal{M}_{X',x} = {\mathrm{colim}}_{I\subset P} \mathcal{M}_{\text{Bl}_I X,\pi_I(x)}$.
    Since group completion is a left adjoint, we also get $\mathcal{M}_{X',x}^{gp} = {\mathrm{colim}}_{I\subset P} \mathcal{M}_{\text{Bl}_I X,\pi_I(x)}^{gp}$, which is a filtered colimit and hence agrees with the colimit taken in the category of sets. Now let $I\subset P$ be a finitely generated ideal so that $m\in \mathcal{M}_{\text{Bl}_I X,\pi_I(x)}^{gp}$. By the construction of the log blow-up, there is a morphism $P^{gp}\rightarrow \mathcal{M}_{\text{Bl}_I X,\pi_I(x)}^{gp}$ coming from a local chart whose image along with $\mathcal{O}_{\text{Bl}_I X,\pi_I(x)}^\times$ generates $\mathcal{M}_{\text{Bl}_I X,\pi_I(x)}^{gp}$. As a result, there is an element $\frac{a}{b}\in P^{gp}$ that maps to $m$ modulo $\mathcal{O}_{\text{Bl}_I X,\pi_I(x)}^\times$. For $J = (a,b)\cdot I$ with $(a,b)\subset P$ the ideal generated by $a$ and $b$, this implies that $m\in \mathcal{M}_{\text{Bl}_J X,\pi_J(x)}$ or $m^{-1}\in \mathcal{M}_{\text{Bl}_J X,\pi_J(x)}$. This finally shows \eqref{valuative space if chart} and the existence of $X^{\text{val}}$. 
    
    For \eqref{valuative space is limit fs}, the universal property of the log blow-up induces a morphism $\eta_X\colon X^{\text{val}}\rightarrow \lim_\cI \text{Bl}_\cI X$. By Lemma \ref{lemma: can extend ideals} we furthermore have $\eta_X\times_X U = \eta_U$ for any strict quasi-compact open $U\subset X$ and hence it suffices to check that $\eta_X$ is an isomorphism if $X$ has a global chart, which follows from \eqref{valuative space if chart} and Remark \ref{remark: ideal might as well be globally charted}.
    
    To show the claims in \eqref{item: properties of valuative space}, we may assume that $X=\text{Spec}(P\rightarrow A)$, in which case the claim follows from Lemma \ref{lemma on cofiltered system} and the fact that log blow-ups are proper and surjective.
\end{proof}
\begin{remark}\label{you can always blow up}
    If $X$ is an fs \'etale log scheme, then by \cite[Theorem 5.4]{Toric-singularies} there is a log blow-up $\widetilde{X}\rightarrow X$ so that the log structure of $\widetilde{X}$ is Zariski. Thus, we may define $X^{\text{val}}\coloneqq \widetilde{X}^{\text{val}}$, which admits a projection $\pi\colon X^{\text{val}}\rightarrow \widetilde{X}\rightarrow X$ to $X$. This projection exists only as a morphism of locally ringed spaces. It is easy to verify that this is independent of the choice of $\widetilde{X}$ and Theorem \ref{thm: valuative space}\eqref{item: properties of valuative space}-\eqref{item: pull-back valuative} still hold in this setting. Using this construction, any morphism $f\colon X\rightarrow Y$ of fs \'etale log schemes also induces, functorially, a morphism $f^{\text{val}}\colon X^{\text{val}}\rightarrow Y^{\text{val}}$.
\end{remark}
The valuative log space of an fs log scheme $X$ can be viewed as an incarnation of its m-open site. More precisely, by Remark \ref{m-open local composition}, there is a morphism of sites 
\begin{align*}
    \Phi\colon X_{\text{mop}}&\longrightarrow X^{\text{val}}_{\text{Zar}}   \\
    \left(T\rightarrow X\right) &\mapsto \left( T^{\text{val}}\rightarrow X^{\text{val}}\right),
\end{align*} 
where for any topological space $Y$ we let $Y_{\text{Zar}}\subset \mathsf{Top}/Y$ be the site of topological spaces with local homeomorphisms $T\rightarrow Y$ and coverings $\{T_i\rightarrow T\}_{i\in I}$ consisting of jointly surjective maps over $Y$. Note that there is a canonical equivalence $\text{Shv}(Y_{\text{Zar}}) = \text{Shv}(Y)$. We are now ready to give a more precise statement and proof of Theorem \ref{thm: sheaves on val is m}. \vspace{0.3cm}\\
\textbf{Theorem \ref{thm: sheaves on val is m}.} The induced pushforward 
\[\Phi_*\colon \text{Shv}(X^{\text{val}})\xlongrightarrow{\cong} \text{Shv}(X_{\text{mop}})\] 
is an equivalence.
\begin{proof}[Proof of Theorem \ref{thm: sheaves on val is m}]
    By Theorem \ref{thm: valuative space}\eqref{item: properties of valuative space}, $X^{\text{val}}$ is locally spectral and hence its set of quasi-compact opens $\text{qcop}(X^{\text{val}})$ forms a base for its topology. It follows from Theorem \ref{thm: valuative space}\eqref{item: properties of valuative space},\eqref{valuative space is limit fs} that $\text{qcop}(X^{\text{val}}) = \text{colim}_{\widetilde{X}\rightarrow X} \text{qcop}(\widetilde{X})$, where the colimit runs over all log blow-ups of $X$. Since these log blow-ups are also locally spectral, we obtain $\text{Shv}(X^{\text{val}}) = \lim_{\widetilde{X}\rightarrow X} \text{Shv}(\widetilde{X}{}_{\text{Zar}})$, which together with Proposition \ref{proposition: limit of topoi} implies the claim. 
\end{proof}
Finally, we show that the two seemingly different definitions of logarithmic dimension given in Definition \ref{definition of log dimension 1} and Definition \ref{definition of log dimension 2} actually agree. We will deduce this from the following technical Lemmas.
\begin{lemma}\label{lemma on cofiltered system}
Let $\{X_i\}_{i\in I}$ be a cofiltered system of (locally) spectral spaces with quasi-compact (and quasi-separated) transition maps $\pi_{i,j}\colon X_i\rightarrow X_j$. Let $X={\mathrm{lim}}_{i\in I}\,X_i$ be its limit, and let $\pi_i\colon X\rightarrow X_i$ the projections. Then:
    \begin{enumerate}
        \item\label{item-quasi-compact} $X$ is also (locally) spectral and the $\pi_i$ are quasi-compact (and quasi-separated).
        \item\label{item-closed-surjective} If all of the $\pi_{i,j}$ are closed and surjective, then the $\pi_i$ are closed and surjective.
    \end{enumerate}
\end{lemma}
\begin{proof}
Since any quasi-compact and quasi-separated locally spectral space is spectral, the locally spectral version of the above statement easily reduces to the spectral version.
If the $X_i$ are spectral and the $\pi_{i,j}$ are quasi-compact, then $X$ is also spectral by \cite[0A2Z]{stacks-project} and the projections $\pi_i\colon X\rightarrow X_i$ are quasi-compact, which concludes the proof of \eqref{item-quasi-compact}. 

For \eqref{item-closed-surjective}, we note that if all transition maps are surjective, then by \cite[0A2W(2),0902,0A2S(1)]{stacks-project} the $\pi_i$ are also surjective. For closedness of the $\pi_i$, let $C\subset X$ be a closed subset and $x\in X_i\setminus\pi_i(C)$. Then we have $\pi_i^{-1}\{x\}\cap C = \emptyset$ and hence we can cover $\pi_i^{-1}\{x\}$ by open neighborhoods pulled back from some $X_j$ that are disjoint from $C$. By \cite[0A2S(2)]{stacks-project}, $\pi_i^{-1}\{x\}$ is quasi-compact, hence we may find $j\leq i$ and $U\subset X_j$ disjoint from $\pi_j(C)$ so that $\pi_i^{-1}\{x\}\subset \pi_j^{-1} (U)$. Now letting $C' = X_j\setminus U\subset X_j$, we get that $\pi_i(C)\subset \pi_{i}(\pi_{j}^{-1}(C')) = \pi_{j,i}(C')\not\ni x$, where we used surjectivity of $\pi_j$ in the equality. Since $\pi_{j,i}$ is closed, this implies that $x\not\in \overline{\pi_i(C)}$. As a result, $\pi_i(C)$ is closed and we are done.
\end{proof}

\begin{lemma}\label{lemma: dimension cofiltered}
    Let $\{X_i\}_{i\in I}$ be a cofiltered system of topological spaces and $X={\mathrm{lim}}_{i\in I}\,X_i$ its limit. Then we have
    \[
        \dim(X) \leq \sup_{i\in I} \dim (X_i)
    \]
    and equality if the $X_i$ are locally spectral and the transition maps $\pi_{i,j}\colon X_i\rightarrow X_j$ are surjective, quasi-compact, quasi-separated and closed.
\end{lemma}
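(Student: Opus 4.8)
The plan is to establish the inequality $\dim(X)\leq \sup_{i\in I}\dim(X_i)$ for an arbitrary cofiltered limit, and then to upgrade it to an equality under the additional spectrality, surjectivity, quasi-compactness and closedness hypotheses by combining Lemma \ref{lemma on cofiltered system} with Lemma \ref{lemma: shitty topological shit}.

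For the inequality, I would fix a chain of nonempty irreducible closed subsets $Z_0\subsetneq Z_1\subsetneq\cdots\subsetneq Z_n$ in $X$ and produce, for a single well-chosen index $i$, a chain of length $n$ in $X_i$. Since continuous images of irreducible sets are irreducible, each $\overline{\pi_i(Z_k)}$ is an irreducible closed subset of $X_i$, and $Z_k\subset Z_{k+1}$ forces $\overline{\pi_i(Z_k)}\subset\overline{\pi_i(Z_{k+1})}$; the only issue is strictness. The key step is to realize strictness using that the topology on the limit $X$ is generated by the sets $\pi_j^{-1}(U)$ with $U\subset X_j$ open. Concretely, for each $k$ I would choose a point $p_k\in Z_{k+1}\setminus Z_k$; since $Z_k$ is closed and misses $p_k$, there is an index $i_k$ and an open $U_k\subset X_{i_k}$ with $\pi_{i_k}(p_k)\in U_k$ and $\pi_{i_k}(Z_k)\cap U_k=\emptyset$, whence $\pi_{i_k}(p_k)\notin\overline{\pi_{i_k}(Z_k)}$.

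I would then check that this separation persists at every finer index: if $\pi_i(p_k)\in\overline{\pi_i(Z_k)}$ for some $i$ admitting a transition map to $i_k$, applying the continuous $\pi_{i,i_k}$ and using $\pi_{i,i_k}(\overline{\pi_i(Z_k)})\subset\overline{\pi_{i_k}(Z_k)}$ together with $\pi_{i,i_k}(\pi_i(p_k))=\pi_{i_k}(p_k)$ would contradict the choice of $i_k$. Because $I$ is cofiltered, there is a single index $i$ admitting transition maps to all of $i_0,\dots,i_{n-1}$, and at this index $\pi_i(p_k)\notin\overline{\pi_i(Z_k)}$ while $\pi_i(p_k)\in\overline{\pi_i(Z_{k+1})}$ for every $k$. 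This yields the strict chain $\overline{\pi_i(Z_0)}\subsetneq\cdots\subsetneq\overline{\pi_i(Z_n)}$, so $\dim(X_i)\geq n$; taking the supremum over all such chains gives the inequality, the case $\dim(X)=\infty$ following by applying this to chains of arbitrary finite length.

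For the reverse inequality under the extra hypotheses, I would invoke Lemma \ref{lemma on cofiltered system} to conclude that $X$ is spectral and that each projection $\pi_i\colon X\to X_i$ is closed and surjective. Lemma \ref{lemma: shitty topological shit}\eqref{item: dimension closed} then gives $\dim(X)\geq\dim(X_i)$ for every $i$, hence $\dim(X)\geq\sup_{i\in I}\dim(X_i)$, and combined with the first part this proves equality. The main obstacle is the strictness argument in the inequality, namely matching a genuine chain in the limit with one of the same length at a finite stage; this is precisely where the basis description of the limit topology and the persistence-to-finer-indices check do the work, while everything else reduces by bookkeeping to the two cited lemmas.
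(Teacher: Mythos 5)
Your proposal is correct and follows essentially the same route as the paper: the inequality is proved by descending a chain of irreducible closed subsets to a single finite stage, using that opens pulled back along the projections form a base and that the separation persists at finer indices (the paper compresses your persistence check into the remark that $\overline{\pi_j(Z_1)}\subsetneq\overline{\pi_j(Z_2)}$ for all $j\leq i$), and the equality is obtained exactly as you do, by combining Lemma \ref{lemma on cofiltered system} with Lemma \ref{lemma: shitty topological shit}\eqref{item: dimension closed}.
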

\begin{proof}
    Let $Z_1\subsetneq Z_2\subset X$ be a proper inclusion of irreducible closed subsets. Since the opens pulled back from the projections $\pi_i\colon X\rightarrow X_i$ form a basis for the topology of $X$, we can choose $x\in Z_2\setminus Z_1$ and an open neighborhood $U\subset X_i$ of $\pi_i(x)$ for some $i\in I$ so that $Z_1\cap \pi_i^{-1}(U)=\emptyset$. This shows that $\overline{\pi_j(Z_1)}\subsetneq \overline{\pi_j(Z_2)}$ for all $j\leq i$ and hence any finite chain of irreducible closed subsets in $X$ descends to one of the same length on some $X_i$. This proves the desired inequality. Combining Lemma \ref{lemma on cofiltered system} and \cite[02JF]{stacks-project}, we get the desired equality of dimensions.
\end{proof}
\begin{corollary}\label{corollary: both definitions of log dimension agree}
    For any fs log scheme $X$, we have 
    \[
        \dim(X^{\text{val}}) = \underset{\widetilde{X}\rightarrow X}{\sup} \dim(\widetilde{X}),
    \]
    where the supremum runs over all log blow-ups of $X$.
\end{corollary}
\begin{proof}
    This follows from Theorem \ref{thm: valuative space}\eqref{valuative space is limit fs} and Lemma \ref{lemma: dimension cofiltered}.
\end{proof}

\printbibliography
\end{document}